\renewcommand{\le}{\leqslant}
\renewcommand{\ge}{\geqslant}
\newcommand{\ff}{\mathcal{F}}
\newcommand{\s}{\mathcal{S}}
\newcommand{\g}{\mathcal{G}}
\newcommand{\aaa}{\mathcal{A}}
\newcommand{\rr}{\mathcal R}
\newcommand{\m}{\mathcal}
\newcommand{\E}{\mathsf{E}}
\newcommand{\T}{\mathcal{T}}
\newcommand{\W}{\mathcal{W}}
\newtheorem{thm}{Theorem}
\newtheorem{lem}[thm]{Lemma}
\newtheorem{cor}[thm]{Corollary}
\newtheorem{cla}[thm]{Claim}
\newtheorem{prop}[thm]{Proposition}
\newtheorem{conj}{Conjecture}
\newtheorem{obs}[thm]{Observation}
\newtheorem{defn}[thm]{Definition}
\newtheorem{const}[thm]{Construction}
\title{The Hajnal--Rothschild problem}
\author{Peter Frankl}\address{R\'enyi Institute, Budapest, Hungary; Email: {\tt peter.frankl@gmail.com}}
\author{Andrey Kupavskii}
\address{Moscow Institute of Physics and Technology, Russia, St. Petersburg State University; Email: {\tt kupavskii@ya.ru}}
\date{}
\begin{document}
\maketitle
\begin{abstract}
For a family $\ff$ define $\nu(\ff,t)$ as the largest $s$ for which there exist $A_1,\ldots, A_{s}\in \ff$ such that for $i\ne j$ we have $|A_i\cap A_j|< t$. What is the largest family $\ff\subset{[n]\choose k}$ with $\nu(\ff,t)\le s$? This question goes back to a paper Hajnal and Rothschild from 1973. We show that, for some absolute $C$ and $n>2k+Ct^{4/5}s^{1/5}(k-t)\log_2^4n$, $n>2k+Cs(k-t)\log_2^4 n$ the largest family with $\nu(\ff,t)\le s$ has the following structure: there are sets $X_1,\ldots, X_s$ of sizes $t+2x_1,\ldots, t+2x_s$, such that for any $A\in \ff$ there is $i\in [s]$ such that $|A\cap X_i|\ge t+x_i$. That is, the extremal constructions are unions of the extremal constructions in the Complete $t$-Intersection Theorem. For the proof, we enhance the spread approximation technique of Zakharov and the second author. In particular, we introduce the idea of {\it iterative spread approximation}.
\end{abstract}
\section{Introduction}
Let $[n] = \{1,2,\ldots, n\}$ be the standard $n$-element set and $2^{[n]}$ its power set. For $k\le n$ let ${[n]\choose k}$ denote the collection of all the $k$-subsets of $[n]$. Collections $\ff\subset 2^{[n]}$ are called {\it families}. If $\ff\subset {[n]\choose k}$ then it is said to be $k$-uniform.

For a family $\ff\subset 2^{[n]}$ let $\nu(\ff)$ denote its {\it matching number}, the maximal number of pairwise disjoint members of $\ff$. If $\emptyset\in \ff$ then $\nu(\ff) = \infty$ but otherwise $\nu(\ff)\le n$. If $\nu(\ff) = 1$ then $\ff$ is called {\it intersecting}. In this case, one can finetune the definition and say that $\ff$ is {\it $t$-intersecting} if $|F\cap F'|\ge t$ for all $F,F' \in \ff$.

These are central notions for extremal set theory and many of its most important results/problems relate to these notions. For a detailed exposition we refer to the books \cite{GP} and \cite{FT}. Nevertheless, let us state some key results related to the present research.
\begin{thm}[Erd\H os, Ko and Rado \cite{EKR}]
  Let $k\ge t\ge 1$ integers and $n\ge n_0(k,t)$. Suppose that $\ff\subset {[n]\choose k}$ is $t$-intersecting. Then
  \begin{equation}\label{eq1.1}
    |\ff|\le {n-t\choose k-t}.
  \end{equation}
  Moreover, for $n>n_0(k,t)$ equality holds iff $\ff$ is a full $t$-star, i.e., for some $T\in {[n]\choose t}$, $\ff = \{F\in {[n]\choose k}: T\subset F\}$.
\end{thm}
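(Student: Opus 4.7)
The plan is to follow the classical shifting (compression) approach. First I reduce to \emph{shifted} families using the operators $S_{ij}$ for $1 \le i < j \le n$, which replace each $F \in \ff$ containing $j$ but not $i$ by $(F \setminus \{j\}) \cup \{i\}$ whenever this set is not already in $\ff$. A standard check shows $S_{ij}$ preserves $k$-uniformity, cardinality, and the $t$-intersection property, so after finitely many such operations I may assume $\ff$ is shifted. Note also that any nonempty shifted $k$-uniform family automatically contains $[k]$, so $[t] \subset [k] \in \ff$.

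The heart of the argument is the structural claim: if $\ff$ is shifted, $t$-intersecting, and $n \ge n_0(k,t)$, then every member of $\ff$ contains $[t] = \{1, \ldots, t\}$. Suppose for contradiction that some $F_0 \in \ff$ does not contain $[t]$, and write $|F_0 \cap [t]| = t-r$ with $r \ge 1$. Iterating shifts---first pushing the elements of $F_0 \setminus [t]$ down to the smallest available slots in $[t+1,n]$, and then pushing the elements of $F_0 \cap [t]$ down within $[t]$---produces the canonical bad set $F^* = [t-r] \cup \{t+1, \ldots, k+r\} \in \ff$. Every $G \in \ff$ must now satisfy $|G \cap F^*| \ge t$; combined with the $t$-intersection condition against the many $[t]$-containing members of $\ff$ (for instance $[t] \cup \{j\} \cup \{\ldots\}$ style sets, which are forced into $\ff$ by shiftedness once $[k] \in \ff$), this severely restricts $G$. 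Partitioning $\ff$ by the trace $G \cap [k+r]$ and bounding each trace class via shiftedness yields an upper bound on the non-star part of the form $C(k,t)\binom{n-t-1}{k-t-1}$. Since $\binom{n-t-1}{k-t-1}/\binom{n-t}{k-t} = (k-t)/(n-t)$, this is of lower order than $\binom{n-t}{k-t}$ once $n$ exceeds an explicit threshold $n_0(k,t)$, contradicting $|\ff| \ge \binom{n-t}{k-t}$.

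Once the structural claim is in place, $\ff$ lies inside the full $t$-star on $[t]$, giving $|\ff| \le \binom{n-t}{k-t}$ with equality iff $\ff$ is exactly that star. For the unshifted extremal family, one undoes the shifts one at a time, using the standard stability property that a single shift cannot turn a non-star $t$-intersecting family into a full star of the same size, to conclude that the original $\ff$ was a full $t$-star on some $T \in \binom{[n]}{t}$ to begin with. The main technical obstacle is the counting bound in the middle step: one must use the $t$-intersection condition against several members of $\ff$ simultaneously and exploit shiftedness to bound each trace class. The sharpness of this estimate dictates the explicit dependence of $n_0(k,t)$, and is exactly the kind of estimate that the stability/spread-approximation methods of the present paper are designed to sharpen in the much more delicate setting $\nu(\ff,t) \le s$.
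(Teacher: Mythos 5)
The paper does not prove this theorem; it cites it as a classical result of Erd\H os--Ko--Rado, with the optimal threshold $n_0(k,t)=(k-t+1)(t+1)$ attributed to Frankl~\cite{F1} and Wilson~\cite{W}. So there is no proof in the paper against which to compare your argument. Taken on its own merits, your sketch follows the standard shifting route, which is a legitimate way to get the theorem for large $n$; but two of the specific claims you use along the way are not correct.

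First, shifting is a \emph{downward}-closure operation: if $F\in\ff$ with $j\in F$, $i\notin F$, $i<j$, then $(F\setminus\{j\})\cup\{i\}\in\ff$. Consequently $[k]\in\ff$ never forces any other set into $\ff$, since $[k]$ is already the minimum of $\binom{[n]}{k}$ under this order. The claim that ``$[t]\cup\{j\}\cup\{\ldots\}$ style sets are forced into $\ff$ by shiftedness once $[k]\in\ff$'' is therefore false, and the appeal to ``the many $[t]$-containing members of $\ff$'' is circular --- that is precisely what you are trying to establish. Second, the bound $|G\cap[k+r]|\ge t+1$ does not follow from $|G\cap[k]|\ge t$ and $|G\cap F^*|\ge t$ alone: taking $G=[t-r]\cup\{t+1,\ldots,t+r\}\cup S$ with $S$ any $(k-t)$-set avoiding $[k+r]$ gives $|G\cap[k]|=|G\cap F^*|=|G\cap[k+r]|=t$, so equality holds. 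To make the trace-counting step work you need a sharper structural fact, for example Frankl's kernel lemma that a shifted $t$-intersecting family satisfies $|F\cap G\cap[2k-t]|\ge t$ for all $F,G\in\ff$, and then a more careful analysis of the traces on $[2k-t]$ (one cannot just multiply a crude $t$-intersecting trace count by $\binom{n-2k+t}{k-t-1}$, because the size-$t$ trace alone already contributes on the order of $\binom{n-t}{k-t}$). The overall plan --- compress, locate a canonical bad set, bound the family by trace classes, compare against $\binom{n-t}{k-t}$, then undo shifts to recover the extremal family --- is sound and yields some $n_0(k,t)$ (exponential in $k$), but the two steps above need to be repaired before the sketch is a proof.
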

Let us note that \eqref{eq1.1} was proved by Erd\H os, Ko and Rado \cite{EKR}, however, the exact value $n_0(k,t) = (k-t+1)(t+1)$ was determined by the first author for $t\ge 15$ \cite{F1} and by Wilson \cite{W} for $2\le t\le 14$, with a proof valid for all $t$. The first author conjectured that for all $n,k,t$ the largest $t$-intersecting family must be isomorphic to one of the following examples:
\begin{equation}\label{eqak}
  \mathcal D_i:=\Big\{F\in {[n]\choose k}: |F\cap [t+2i]|\ge i\Big\}.
\end{equation}
This was proved for all $n,k,t$ by Ahlswede and Khachatrian \cite{AK}. By now $t$-intersecting collections of objects were studied for various objects (cf. \cite{EFP, Kup54, Kup55, KuZa} and references therein).

The following problem is one of the most important unsolved questions in extremal set theory.
\begin{conj}[The Erd\H os Matching Conjecture, \cite{E}] Let $n,k,s$ be positive integers, $n\ge k(s+1)$. Let $\ff\subset {[n]\choose k}$ and suppose that $\nu(\ff)\le s$. then
\begin{equation}\label{eq1.2}
  |\ff|\le \max\Big\{{ks+k-1\choose k}, {n\choose k}-{n-s\choose k}\Big\}.
\end{equation}
\end{conj}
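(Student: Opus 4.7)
The plan is to combine Frankl's shifting technique with the iterative spread approximation developed in this paper, reducing the question to controlling a bounded number of structured layers.

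First I would apply the standard shifting operations repeatedly to $\ff$ until it becomes left-shifted; this preserves $|\ff|$ and does not increase $\nu(\ff)$. For a shifted family with $\nu(\ff)\le s$, a classical lemma of Frankl states that every $F=\{a_1<a_2<\cdots<a_k\}\in\ff$ must satisfy $a_i\le (s+1)(i-1)+1$ for some $i\in[k]$; otherwise one could greedily build $s+1$ pairwise disjoint $k$-sets lying below $F$ in the shifting partial order, contradicting the matching bound.

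Next I would stratify $\ff$ by the smallest such index $i(F)$. At the extreme $i(F)=k$ one has $F\subset[ks+k-1]$, which recovers the clique bound ${ks+k-1\choose k}$; at $i(F)=1$ every $F$ meets $[s]$, which recovers the star bound ${n\choose k}-{n-s\choose k}$. The real work lies in the intermediate layers, where neither candidate is sharp a priori. Here I would invoke the iterative spread approximation: replace each layer by a spread family, apply a Frankl-type matching bound on spread families to extract a junta-like core supported on $O(sk)$ coordinates, and then compare the resulting structure back to the two candidate extrema layer by layer.

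The main obstacle is precisely these middle values of $i(F)$: the Erd\H{o}s Matching Conjecture is known only for $n\ge(2s+1)k-s$ (due to the first author) and in sporadic small cases, because no existing method — shifting, stability, junta approximation, or the original spread approximation — bridges the entire gap between the clique regime $n\approx(s+1)k$ and the star regime $n\gg sk$. I expect the approach above to push the known range of $n$ further, but not to settle the conjecture in full generality.
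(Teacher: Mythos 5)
The statement you were handed is the Erd\H os Matching Conjecture, which the paper presents as an \emph{open} conjecture, not a theorem; the paper does not prove it and no complete proof is known. You correctly flag this at the end, and that is the essential point: no proposal here, including yours, can be a valid proof. That said, two things in your sketch are worth correcting. First, Frankl's shifting lemma is misstated. For a shifted $\ff$ with $\nu(\ff)\le s$, the correct conclusion is that every $F=\{a_1<\cdots<a_k\}\in\ff$ has some $i\in[k]$ with $a_i\le (s+1)i-1$: if $a_i\ge (s+1)i$ for all $i$, then the $s+1$ pairwise disjoint sets $F^{(j)}=\{(i-1)(s+1)+j+1: i\in[k]\}$, $j=0,\ldots,s$, all lie below $F$ in the shifting order and hence in $\ff$, a contradiction. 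Your stated bound $a_i\le (s+1)(i-1)+1$ is strictly stronger and false; it does not even recover the two conclusions you draw from it (at $i=1$ it would force $1\in F$ rather than $F\cap[s]\ne\emptyset$, and at $i=k$ it would give $F\subset[sk+k-s]$ rather than $[sk+k-1]$).

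More substantively, the middle layers $1<i(F)<k$ you identify are exactly where every known method stalls, and your proposal gives no mechanism for the spread-approximation step to bridge them. As $n$ shrinks toward the clique regime $n\approx(s+1)k$, the juntas that spread approximation extracts have support of size on the order of $sk$, which is comparable to $n$, and the comparison with the two candidate extrema loses all slack; this is precisely why the technique in this paper is only applied for $t\ge 2$ and for $n$ well above $2k$. For the record, your quoted range is slightly out of date: beyond $n\ge(2s+1)k-s$ due to Frankl, the present authors have shown the star bound holds for $n<2k(s+1)$ and for $n<\tfrac 53 sk$ when $s$ is large, and the conjecture is settled for $k\le 3$ and for $n$ very close to $(s+1)k$. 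None of those arguments close the gap via spread approximation, and the sketch as written does not supply the missing idea.
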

Let us note that the bounds in the RHS correspond to the construction ${[ks+k-1]\choose k}$ and $\{F\in {[n]\choose k}: F\cap [s]\ne \emptyset\}$. Since the first one is independent of $n$, for large $n$ the second one is dominant. Erd\H os \cite{E} proved
\begin{equation}\label{eq1.3}
  |\ff|\le {n\choose k}-{n-s\choose k} \ \ \ \text{for } n\ge n_1(k,s).
\end{equation}
The bounds on $n_1(k,s)$ were subsequently improved by Bollob\'as, Daykin and Erd\H os \cite{BDE}, Huang, Loh and Sudakov \cite{HLS}. The current best bounds are due to the present authors (cf. \cite{F4}, \cite{FK21}). We also note that, from the other end, the first author and then Kolupaev with the second author  showed that for $n$ close to $(s+1)k$ the first family is extremal (cf. \cite{F7}, \cite{KK}).

Let us also mention that the case $k=1$ is trivial. For $k=2$ ($k=3$) \eqref{eq1.2} was proved for all $n\ge k(s+1)$ by Erd\H os and Gallai \cite{EG} (by the first author in \cite{F6}), respectively.

Hajnal and Rothschild \cite{HR} proved a common generalization of \eqref{eq1.1} and \eqref{eq1.3}, which deals with a question that, rather surprisingly, was largely overlooked so far. In order to state their result, we need a definition and a construction.
\begin{defn} For a positive integer $t$ and a family $\ff\subset 2^{[n]}$, let $\nu(\ff,t)$ denote the maximum number $s$ such that there exist $F_1,\ldots, F_s\in \ff$ satisfying $|F_i\cap F_j|\le t-1$ for all $1\le i<j\le s$.
\end{defn}
Note that $\nu(\ff,1) = \nu(\ff)$ and $\nu(\ff,t)<\infty$ if $|F|>t$ for all $F\in \ff$. For a family $\ff$, sets $X\subset Y$ and a family  $\mathcal S$, let us introduce some notation, a part of which is somewhat non-standard.
\begin{align*}
\ff(X)&:=\{F\setminus X:X\subset F, F\in \ff\},\\
\ff[X]&:= \{F: X\subset F, F\in \ff\}, \\
\ff(X, Y)&:= \{F\setminus X: F\cap Y = X\},\\
\ff[X, Y]&:= \{F: F\cap Y = X\},\\
\ff(\s)&:=  \bigcup_{A\in \s}\ff[A],\\
\ff[\s]&:=  \bigcup_{A\in \s}\ff[A].
\end{align*}

\begin{const} Here and in what follows, let $\m A$ stand for the family ${[n]\choose k}$. Choose $\m M \subset {[n]\choose t}$, $|\m M| = s$, and define $\aaa[\m M]$ as above, that is,
$$\mathcal A[\m M] =\Big\{H\in {[n]\choose k}: M\subset H\text{ for some }M\in \m M\Big\}.$$
\end{const}
By the pigeon-hole principle, $\nu(\m A[\m M], t)\le s$ should be clear. For fixed $k,t,s$, and $n\to \infty$, we have $|\mathcal A[\m M]| = (1+o(1))s{n-t\choose k-t}$. Let us give a more general construction.
\begin{const}\label{const2} For each $i\in [s]$ choose a non-negative integer $x_i\le k-t$ and a set $Y_i$ of size $t+2x_i$. Define $\mathcal K:= \cup_{i\in[s]} {Y_i\choose t+x_i}$ and consider $\mathcal A[\m K]$. We call each ${Y_i\choose t+x_i}$ a $t$-intersecting clique and the family $\mathcal K$ a union of $s$ $t$-intersecting cliques.
\end{const}
Again, by the pigeon-hole principle, $\nu(\m A[\m K], t)\le s$ should be clear and, for fixed $k,t,s$, and $n\to \infty$, it is not difficult to check that we have $|\mathcal A[\m K]| = (1+o(1))(\sum_{i\in [s]}|\m D_{x_i}|)$ (cf. \eqref{eqak}) as long as $Y_{i_1}\nsubseteq Y_{i_2}$ for distinct $i_1,i_2$. At the same time, for $k>t\ge 2$ and $s\ge 1$ it is not obvious what choice $\m K$ maximizes $|\mathcal A[\m K]|$. It is unclear even for the simpler case of $\m M$.

Below we shall give a simple proof based on {\it shifting} showing that choosing $\m K$ so that $Y_i$ are pairwise disjoint  sets always attains the maximum. At the same time, even for the case of the matchings,  this is not the unique maximal choice. Namely, if  $2t-1>k$ and $|M_1\cup M_2|>k$ for all $M_1,M_2\in \m M$ then $|\mathcal H| = s{n-t\choose k-t}$. Equivalently, $|M_1\cap M_2|<2t-k$ for all $M_1,M_2\in \m M$. For the more general case of $\m K$ it seems to be a very challenging and tedious analytical problem to decide, which choice of sizes of cliques with disjoint supports maximizes $|\mathcal A(\m K)|$ for a each choice of $n,s,k,t$.\footnote{Most probably, it has no good answer. Even in the case of EMC we do not know, when exactly does the transition between two extremal examples happen.} Thus, we avoid trying to solve it here.  A reasonable guess is that, for any $n,s,k,t$, there is a non-negative integer $j$ such that only cliques with $x_i\in \{j,j+1\}$  appear.

For convenience, let $$h(n,k,t,s) = \max_{x_1,\ldots,x_s}|\m A[\m K]|,$$ where $\m K$ is as in Construction~\ref{const2} with pairwise disjoint $Y_i$. Also, for convenience, let $$h'(n,k,t,s) = |\m A[\m M]|,$$ where $\m M$ is an $s$-matching of $t$-element sets.

\begin{lem}\label{lem22}
  Fix positive integers $s,t,k,x_1,\ldots, x_s$ and assume $n\ge \sum_{i\in[s]}(t+2x_i)$. Let $\m K = \cup_{i\in[s]}{Y_i\choose t+x_i}$ for arbitrary $Y_1,\ldots, Y_s$ of sizes $t+2x_1,\ldots, t+2x_s$, respectively. Then $\m A[\m K]$ has size at most that of a family $\m A[\m K']$, which is a union of $s$ cliques with disjoint supports and same sizes as that of $Y_1,\ldots, Y_s$.
\end{lem}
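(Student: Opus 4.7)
The plan is to reduce to the disjoint case by a single-element swap, iterated until no overlaps remain. The progress measure will be the overlap count $\sum_{p<q}|Y_p \cap Y_q|$. If this is zero, there is nothing to show. Otherwise, pick $i\ne j$ and $a \in Y_i \cap Y_j$; the hypothesis $n \ge \sum_\ell(t+2x_\ell) > |\bigcup_\ell Y_\ell|$ (strict due to the overlap) supplies an element $b \in [n] \setminus \bigcup_\ell Y_\ell$. Form $Y_i' := (Y_i \setminus \{a\}) \cup \{b\}$, keep the clique size $t+x_i$, and let $\m K'$ be the resulting union of cliques. Because $b$ lies outside every $Y_\ell$, we have $|Y_i' \cap Y_\ell| = |Y_i \cap Y_\ell| - [a \in Y_\ell]$ for each $\ell \ne i$, so the overlap count strictly decreases. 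It therefore suffices to prove $|\m A[\m K]| \le |\m A[\m K']|$ and iterate.

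To compare the two families, I would partition $\binom{[n]}{k}$ according to $F \cap \{a,b\}$. When $\{a,b\} \subset F$ or $\{a,b\} \cap F = \emptyset$, a direct count gives $|F \cap Y_i'| = |F \cap Y_i|$, so such an $F$ belongs to $\m A[\m K]$ iff it belongs to $\m A[\m K']$. Only the two remaining classes can contribute to the difference. For $F$ in the class $\{F' : a \in F',\ b \notin F'\}$, one has $F \in \m A[\m K] \setminus \m A[\m K']$ exactly when $|F \cap Y_i| = t+x_i$ and $|F \cap Y_\ell| < t+x_\ell$ for every $\ell \ne i$; symmetrically, for $F$ in the class $\{F' : a \notin F',\ b \in F'\}$, one has $F \in \m A[\m K'] \setminus \m A[\m K]$ exactly when $|F \cap Y_i| = t+x_i - 1$ and $|F \cap Y_\ell| < t+x_\ell$ for every $\ell \ne i$. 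The bijection $F \mapsto F' := (F \setminus \{a\}) \cup \{b\}$ between these two classes sends each extra set of $\m A[\m K]$ to an extra set of $\m A[\m K']$: the $Y_i$-slot decreases by exactly $1$, as required, and for each $\ell \ne i$, using $b \notin Y_\ell$, we get $|F' \cap Y_\ell| = |F \cap Y_\ell| - [a \in Y_\ell] \le |F \cap Y_\ell| < t + x_\ell$. Hence $|\m A[\m K]| \le |\m A[\m K']|$, closing the induction.

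The main subtlety is to choose the replacement element $b$ outside \emph{every} support $Y_\ell$, not merely outside $Y_i$: otherwise the swap $F \mapsto (F \setminus \{a\}) \cup \{b\}$ might create new ``$|F \cap Y_\ell| \ge t + x_\ell$'' events for some $\ell \ne i$ and destroy the injection. The hypothesis $n \ge \sum_\ell(t+2x_\ell)$ is precisely what guarantees the existence of such a fresh $b$ whenever an overlap is present, so the induction terminates in finitely many steps in a configuration with pairwise disjoint supports of the original sizes.
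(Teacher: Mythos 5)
Your proof is correct. Let me flag how it relates to the paper's argument, which is only a sketch: the paper views $\m A[\m K]$ as the upper $k$-shadow of $\m K$, asserts that $\m K$ can be reached from the disjoint-support $\m K'$ by a sequence of $S_{i\leftarrow j}$-shifts, and invokes the general fact that shifting never increases the size of an upper shadow. You replace that appeal to shifting machinery with a self-contained, bare-hands injection: move one element $b\notin\bigcup_\ell Y_\ell$ into $Y_i$ in place of a shared element $a$, partition $\binom{[n]}{k}$ by $F\cap\{a,b\}$, observe that the two ``mixed'' classes are the only ones where membership in $\m A[\m K]$ and $\m A[\m K']$ can differ, and match the extra sets of $\m A[\m K]$ (class $a\in F$, $b\notin F$, $|F\cap Y_i|=t+x_i$, all other slots subcritical) injectively into extra sets of $\m A[\m K']$ via $F\mapsto(F\setminus\{a\})\cup\{b\}$. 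This buys a fully rigorous, elementary proof at the cost of a bit more bookkeeping; in particular it sidesteps the (not entirely obvious) claim that a general $\m K$ can literally be obtained from $\m K'$ by $S_{i\leftarrow j}$-shifts, since a single such shift would move $a$ to $b$ in \emph{every} clique containing $a$, not just the one you want to modify. Your choice to swap at the level of the supports $Y_i$ rather than shifting the family $\m K$ itself, and your insistence that $b$ be fresh relative to all supports so that the other slots can only shrink, are exactly the ingredients that make the induction on $\sum_{p<q}|Y_p\cap Y_q|$ go through cleanly. So: correct, and a genuinely more elementary route than the paper's sketch, though both are in the spirit of one-element exchange/compression.
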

\begin{proof}[Sketch of the proof] For any family $\m K$ the family $\m A[\m K]$ is actually the upper $k$-shadow of $\m K$. 
Let $\m K'$ be a family consisting of $s$ cliques on pairwise disjoint sets $Y'_1,\ldots, Y'_s\subset [n]$ of sizes $t+2x_1,\ldots, t+2x_s$, respectively. In order to prove the lemma, we note that, up to reordering of the elements, any family $\mathcal K$ can be obtained from  $\mathcal K'$ using a sequence of $S_{i\leftarrow j}$--shifts. Moreover, shifts do not increase the size of the upper shadow. 
\end{proof}

\begin{thm}[Hajnal and Rothschild \cite{HR}]\label{thmhr} Let $k>t\ge 1$, $s\ge 1$. Suppose that $\ff\subset {[n]\choose k}$ satisfies $\nu(\ff,t)\le s$. Then
\begin{equation}\label{eq1.4}
  |\ff|\le h'(n,k,t,s) \ \ \ \ \text{holds for }n>n_0(k,t,s).
\end{equation}
\end{thm}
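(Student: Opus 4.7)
The plan is to combine the shifting technique with a greedy covering argument and a careful degree analysis.

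First, I would apply $(i,j)$-shifts for $i<j$ to reduce to the case where $\ff$ is shifted. These operations preserve $|\ff|$ and do not increase $\nu(\ff,t)$: any collection of $s+1$ sets with pairwise $(<t)$-intersections in the shifted family can be pulled back through each shift to yield an analogous collection in the original.

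Second, I would construct a $t$-cover greedily: pick any $F_1 \in \ff$ and set $X_1 := F_1$; inductively, if $X_i$ is not a $t$-cover of $\ff$, pick $F_{i+1} \in \ff$ with $|F_{i+1} \cap X_i| < t$ and set $X_{i+1} := X_i \cup F_{i+1}$. Since $|F_{i+1} \cap F_j| \le |F_{i+1} \cap X_i| < t$ for each $j \le i$, the hypothesis $\nu(\ff, t) \le s$ halts the process at some step $m \le s$, yielding $X = X_m$ with $|X| \le sk$ and $|F \cap X| \ge t$ for every $F \in \ff$.

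Third, I would set a threshold $\tau$ of order $n^{k-t-1}$ (with constants depending on $k, s, t$) and call a $t$-set $T \in {X \choose t}$ \emph{heavy} if $d(T) := |\ff[T]| \ge \tau$. The heavy family $\m H \subset {X \choose t}$ satisfies $\nu(\m H) \le s$: indeed, if there were disjoint heavy $T_1, \ldots, T_{s+1}$, one could greedily select $F_i \in \ff[T_i]$ with pairwise $(<t)$-intersections, since at each stage the number of ``bad'' $F \in \ff[T_{i+1}]$ (those with $|F \cap F_j| \ge t$ for some $j \le i$) is of order $s \cdot n^{k-t-1}$, strictly less than $\tau$ for an appropriate constant, contradicting $\nu(\ff, t) \le s$.

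Finally, I would count: $|\ff| \le \sum_{T \in {X \choose t}} d(T) \le \sum_{T \in \m H} d(T) + {sk \choose t}\tau$. Taking a maximum matching $T_1^*, \ldots, T_m^*$ in $\m H$ (with $m \le s$), every heavy $T$ is either some $T_i^*$ or meets $\bigcup_i T_i^*$. Inclusion-exclusion, combined with the fact that for $n$ large the stars $\ff[T_i^*]$ essentially account for the heavy contribution, yields $|\ff| \le |\m A[\{T_1^*, \ldots, T_m^*\}]| + O\bigl({n-t-1 \choose k-t-1}\bigr) \le h'(n, k, t, s)$ once $n > n_0(k, t, s)$. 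The main obstacle is exactly this last step: the greedy cover $X$ has size $\le sk$, so $|\m H|$ could be as large as ${sk \choose t}$, and the naive sum gives only $|\ff| = O_{k,s,t}\bigl({n-t \choose k-t}\bigr)$. Extracting the tight leading constant $s$ requires showing that the $O_{k,s,t}(1)$ heavy $t$-sets meeting but not in the matching contribute only lower-order terms via their overlap with $\bigcup_i \ff[T_i^*]$, which demands delicate inclusion-exclusion bookkeeping and sufficient slack in $n_0(k, t, s)$.
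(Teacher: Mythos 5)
Your route — shifting, a greedy $t$-cover, then a heavy/light degree dichotomy on $t$-subsets of the cover — is the classical one (close in spirit to the original Hajnal--Rothschild argument) and is genuinely different from the paper's, which avoids shifting entirely and instead builds an \emph{exact} low-uniformity cover $\s$ of $\ff$ by the peeling/simplification process of Theorem~\ref{thmapproxtrivial} and then bounds $|\aaa[\s]|$ via spreadness. Your Steps 1--3 are essentially sound (Step 3 needs a small patch: for $k\ge 2t$ a single $F_j$ can block all of $\ff[T_{i+1}]$, so you must first restrict to the $\ge\tau-O(st)\binom{n-t-1}{k-t-1}$ sets of $\ff[T_{i+1}]$ avoiding $\bigcup_{j\ne i+1}T_j$ before counting bad ones).

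The real gap is in Step 4, and it is not a matter of sharper inclusion-exclusion bookkeeping. Your final inequality $|\ff|\le|\aaa[\{T_1^*,\ldots,T_m^*\}]|+O\bigl(\binom{n-t-1}{k-t-1}\bigr)\le h'(n,k,t,s)$ has a strictly positive additive error, and it simply fails in the critical case $m=s$ with $T_1^*,\ldots,T_s^*$ pairwise disjoint: there $|\aaa[\{T_1^*,\ldots,T_s^*\}]|=h'(n,k,t,s)$ \emph{exactly}, and any positive error overshoots. What is required there is an exact containment $\ff\subset\aaa[\{T_1^*,\ldots,T_s^*\}]$, and this can be forced: if some $F_0\in\ff$ contains no $T_i^*$, greedily choose $F_i\in\ff[T_i^*]$ whose part outside $T_i^*$ avoids $F_0\cup\bigcup_{j}T_j^*\cup\bigcup_{j<i}F_j$ (heaviness provides the room), giving $s+1$ sets with pairwise intersections $<t$ — contradiction. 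The other case $m<s$ is also not routine: $|\m H|$ can still be as large as $\binom{sk}{t}$, so $|\aaa[\m H]|$ cannot be bounded by summing degrees; one needs an induction or a matching-number argument to see that $|\aaa[\m H]|\le h'(n,k,t,m)+O(\binom{n-t-1}{k-t-1})$, after which the slack $h'(n,k,t,s)-h'(n,k,t,m)=\Omega(\binom{n-t}{k-t})$ absorbs the error. This two-case dichotomy (either $\ff$ sits exactly inside $\aaa[\m M]$ for an $s$-matching $\m M$, or it falls short of $h'$ by $\Omega(\binom{n-t}{k-t})$) is precisely what the paper's Theorem~\ref{thmapproxtrivial}/Corollary~\ref{corapproxtrivial} deliver — cleanly, because the simplified cover $\s$ there satisfies $\ff\subset\aaa[\s]$ with no ``light'' leakage — and it is the missing ingredient in your sketch.
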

The exact EKR theorem gives $n_0(k,t,1) = (k-t+1)(t+1)$. The current best results for the EMC \cite{F4,FK21} show $n_0(k,1,s)<2k(s+1)$ and $n_0(k,1,s)<\frac 53 sk$ for $s>s_0$. However, in great contrast, we do not know of any reasonably small bounds in the general case of the Hajnal-Rothschild Theorem. If one analyzes the proof in \cite{HR}, it requires that  $n_0(k,t,s)\ge {ks{k\choose t}\choose t+1}\approx k^{t^2}s^t$.

The main goal of the present paper is to extend the result of Hajnal and Rothschild with reasonable bounds on $n_0(k,t,s)$ and, moreover, to prove a more general Ahlswede--Khachatrian type result, determining the structure of the extremal example in the regime when families $\m K$ other than matchings maximize $\aaa[\m K]$. In what follows, $C$ is a non-specified absolute constant. 
\begin{thm}\label{thmmain}
Let $n,k,t\ge 2$ and $s\ge 1$ be integers such that $n>2k+ C(k-t)t^{4/5}s^{1/5}\log_2 n$, $n>2k+C(k-t)s\log_2^4 n$. If a family $\ff\subset {[n]\choose k}$ satisfies $\nu(\ff, t)\le s$ then  $|\ff|\le h(n,k,t,s)$ and, moreover, in case of equality $\ff=\m A[\m K]$ for a family $\m K$ that is a union of $s$ $t$-intersecting cliques as in Construction~\ref{const2}.
\end{thm}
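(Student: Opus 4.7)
The plan is to combine the classical shifting technique with an iterated version of the spread approximation method of Kupavskii and Zakharov. A first observation is that $\nu(\cdot,t)$ is non-increasing under shifts, so one may assume throughout that $\ff$ is shifted; Lemma~\ref{lem22} then ensures that the target $h(n,k,t,s)$ is realised by a shifted union of disjoint-support cliques, which is the configuration we aim to match.

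I would first apply spread approximation to $\ff$ to produce a junta $\m S \subseteq 2^{[n]}$ consisting of near-$t$-uniform sets of polynomially bounded total size, together with a remainder $\ff_0$ of size much smaller than $h$, so that $\ff \setminus \ff_0 \subseteq \aaa[\m S]$. The assumption $\nu(\ff,t) \le s$ translates into a matching-type restriction on $\m S$: essentially, $\m S$ contains at most $s$ pairwise far-apart elements, so a pigeonhole sort partitions $\m S$ into at most $s$ clusters $\m S_1,\dots,\m S_s$, each a $t$-intersecting sub-family supported on some candidate set $Y_i$ of size close to $t+2x_i$.

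The iterative part is needed because a single application of spread approximation detects cores of size $\approx t$ only, whereas the true extremal cores are the $(t+x_i)$-element subsets of $Y_i$. I would therefore apply spread approximation recursively inside each cluster, passing to the links $\ff[A]$ for $A \in \m S_i$, and grow each cluster's core from a single $t$-set to the full $t$-intersecting clique $\binom{Y_i}{t+x_i}$. Each iteration loses a logarithmic factor in the admissible lower bound for $n$, which explains the $\log_2^4 n$ term; the two thresholds stated on $n$ correspond to balancing junta size against the approximation error in two distinct regimes, namely small versus large $s$ relative to $t$.

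The main obstacle will be the quantitative accounting across the iteration: each round must absorb its own remainder while keeping the junta small enough that spread approximation still applies in the next round, and one must verify that a bounded number of rounds suffice. Once the inclusion $\ff \subseteq \aaa[\m K]$ is established for a union $\m K$ of $s$ $t$-intersecting cliques, Lemma~\ref{lem22} gives $|\ff| \le h(n,k,t,s)$. For the equality case, I would argue that the remainder $\ff_0$ must be empty---any extra set would combine with well-chosen representatives from the remaining $s-1$ cliques to yield $s+1$ pairwise $<t$-intersecting members, contradicting $\nu(\ff,t) \le s$---and that the sizes $x_i$ must optimise $h$, giving the claimed Ahlswede--Khachatrian type structural description.
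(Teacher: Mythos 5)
The proposal's opening move is a genuine mis-step. You assume the extremal configuration "is realised by a shifted union of disjoint-support cliques," but for $t\ge 2$ this family is \emph{not} shifted. Take $s=2$, $t=2$, $x_1=x_2=1$, $Y_1=\{1,2,3,4\}$, $Y_2=\{5,6,7,8\}$, and a $k$-set $F$ containing $\{5,6,7\}$ and none of $\{1,2,3,4\}$; then $F\in\aaa[\m K]$, but $S_{1\leftarrow 5}(F)=F\setminus\{5\}\cup\{1\}$ contains only two elements of $Y_2$ and one of $Y_1$, hence lies outside $\aaa[\m K]$. The paper explicitly flags this as the point of the method: "the extremal example is not shifted," and there is no known characterisation of the shifted extremal families for this problem. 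Passing to the shift of $\ff$ is harmless for the upper bound $|\ff|\le h(n,k,t,s)$, but it destroys the structure you want to detect: after left-compression an extremal $\aaa[\m K]$ becomes something of the same size whose description is unknown, so there is no "shifted target configuration" against which to match $\m S$, and the un-shifting step needed to recover the equality case is not supplied.

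The second discrepancy is in the iteration. The paper's iterative spread approximation does not recurse inside clusters to grow a $t$-set into a clique; rather, it alternates globally between locating a dense piece $\ff(X)$ (Theorem~\ref{thm2sets}) and running a new spread decomposition with improved parameters (Theorem~\ref{thmregularity} together with Lemma~\ref{lemtint}), each round shrinking the uniformity of $\m S$ and sharpening the relaxed threshold $t'$. Only \emph{after} this bootstrap does the clique structure emerge, in Theorems~\ref{thmfinegrained} and~\ref{thmfinegrained2}, which proceed by induction on $s$: peel off one dense clique $\m C_1$ at a time and show $\nu(\m S\setminus\m S[\m C_1],t)\le s-1$ (Lemma~\ref{leminduction}). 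The partition of $\m S$ into $s$ clusters that you posit does not exist a priori; establishing it is precisely the content of the second stage, and the pigeonhole you invoke is not enough without the density bookkeeping (Lemmas~\ref{lemboundaei},~\ref{lemboundh2}) that quantifies how much of $|\ff[\m S]|$ each candidate clique must carry. Finally, the third stage (showing $\m R=\emptyset$) is substantially more delicate than "combine with well-chosen representatives": the argument needs the spreadness of $\m R(Y)$ for a maximal $Y$, the quantitative dominance $|\m P_i|/|\m R|\ge 2^{80(st)^{1/2}}$, and then a rainbow-matching step as in Lemma~\ref{lemtint} applied to the links $\ff(Q_i, X_i\cup Y)$.
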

The proof of the theorem relies on the spread approximation method introduced by Zakharov and the second author \cite{KuZa} and then developed in subsequent works by the second author \cite{Kup54,Kup55}. Here the method is greatly enhanced  with numerous extra ingredients. This result has several interesting features, which illustrate the strength of the method. First, the extremal example is not shifted. Without going into details, shifting is essential for many of the results on $t$-intersecting families and matchings, and in both cases the extremal examples are shifted. There is no known proof of the complete $t$-intersection theorem without shifting. Second, in many cases we could not even determine the exact form of the extremal example and thus do not have an expression for $h(n,k,s,t)$. Still, it is possible to prove Theorem~\ref{thmmain}.

What are the values of the parameters for which the  `trivial' example $\aaa[\m M]$ with $\m M$ being a matching becomes smaller than the example $\aaa[\m K]$, which is a union of $s$ vertex-disjoint cliques on $t+2$ vertices? It should be clear that, at least for moderately large $t$, it should be very close to $n = (k-t+1)(t+1),$ as in the case of the complete $t$-intersection theorem, since $|\aaa[\m M]|$ is very close to $s|\m D_0|$ and $|\aaa[\m K]|$ is very close to $s|\m D_1|$. The same is true for $\m A[\m K_i]$ vs $\m A[\m K_{i+1}]$, where $\m K_i$ is a union of $s$ cliques on disjoint supports, each on $t+2i$ vertices. But, again, the exact value of $n$ when the transition happens seems to be very challenging to determine. We only remark that the transition points seem to be essentially independent of $s$, as long as $s$ is not too large (i.e., $n>sk$ or so). Also, this analysis shows that Theorem~\ref{thmmain} indeed goes rather deep into the `non-trivial regime' in the case when $s$ is somewhat small compared to $t$ and $t$ itself is somewhat large. We also note that, on the other extreme, in the setup of the EMC, we only see two extremal families, and thus the assumption of $t$ being somewhat large w.r.t. $s$ is necessary in order to have families like $\m A[\m K_i]$ to be extremal.

\subsection{Organization of the paper} In Section~\ref{secspread} we define spread families and present some of their properties needed in the following sections. In Section~\ref{secsimple} we prove several simpler results concerning the Hajnal--Rothschild problem, and in particular an analogue of Theorem~\ref{thmhr} with polynomial bounds on $n$. The proof develops one of the approaches (the {\it peeling procedure}) that constitute the spread approximation method and works for a more general setting of quasirandom ambient families. In Section~\ref{secout} we present the outline of the proof of the main theorem. The following three sections contain the proof of the main theorem. Section~\ref{sec3} is devoted to obtaining a coarse approximation result ({\it iterative spread approximation} stage) for the Hajnal--Rothschild theorem, that outputs a family $\mathcal S$ of small uniformity with certain desirable quasirandom properties. Section~\ref{sec4} is devoted to discerning more fine-grained structure in families akin to $\mathcal S$. In Section~\ref{secmain} we put the results of Sections~\ref{sec3},~\ref{sec4} together and complete the proof of Theorem~\ref{thmmain}.

\section{Spread families}\label{secspread}
Given a real number $r>1$, we say that a family $\ff$ of sets is {\it $r$-spread} if for each set $X$ we have $|\ff(X)|\le r^{-|X|}|\ff|$. We say that $W$ is a {\it $p$-random subset} of $[n]$ if each element of $[n]$ is included in $W$ with probability $p$ and independently of others.

The spread approximation technique is based on the following theorem.
\begin{thm}[The spread lemma, \cite{Alw}, a sharpening due to \cite{Tao}]\label{thmtao}
  If for some $n,k,r\ge 1$ a family $\ff\subset {[n]\choose \le k}$ is $r$-spread and $W$ is a $(\beta\delta)$-random subset of $[n]$, then $$\Pr[\exists F\in \ff\ :\ F\subset W]\ge 1-\Big(\frac 5{\log_2(r\delta)} \Big)^\beta k.$$
\end{thm}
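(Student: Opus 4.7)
The statement is the spread lemma of Alweiss--Lovett--Wu--Zhang with the sharpening due to Rao/Tao, and I would follow the template of those proofs: reduce the problem to a union of $\beta$ independent sparse random sets, and then run an encoding-based induction on the number of such sets.

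First I would replace the single $(\beta\delta)$-random set $W$ by the union $W_1\cup\cdots\cup W_\beta$ of $\beta$ independent $\delta$-random subsets of $[n]$. Each element of $[n]$ belongs to this union with probability $1-(1-\delta)^\beta\le \beta\delta$ by Bernoulli's inequality, and since the indicators of different elements are independent in both models, the union is stochastically dominated by a $(\beta\delta)$-random set. As the event $\{\exists F\in\ff:\,F\subset W\}$ is monotone increasing in $W$, any lower bound on its probability for the union transfers to the $(\beta\delta)$-random set.

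Next I would establish a one-step lemma: for an $r$-spread family $\ff\subset{[n]\choose \le k}$ and a single $\delta$-random subset $U\subset[n]$, with probability at least $1-\varepsilon k$, where $\varepsilon:=5/\log_2(r\delta)$, either some $F\in\ff$ satisfies $F\subset U$, or one can extract a subfamily of the residual family $\{F\setminus U:F\in\ff\}$ that remains $r$-spread on $[n]\setminus U$. The core of this step is an encoding/counting argument: if for some non-empty $X\subset[n]$ the link $\ff(X)$ is much larger than $r^{-|X|}|\ff|$, then the spread condition applied at $X$ combined with the $\delta$-sparsity of $U$ produces many $F\in\ff$ with $F\subset U$; otherwise, one prunes $\ff$ at such $X$ and, after at most $k$ such pruning steps, obtains a subfamily that is $r$-spread on $[n]\setminus U$. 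The factor $k$ in the final bound accounts for a union bound over these pruning steps.

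Iterating the one-step lemma over $W_1,\ldots,W_\beta$ multiplies the single-step failure probabilities and produces the exponential factor $\varepsilon^\beta$, while the $k$ factor is paid only once at the deepest level of the recursion. The main obstacle is to carry out the pruning step so that the spread constant $r$ is preserved from round to round; the naive residual family $\{F\setminus U:F\in\ff\}$ loses spread, and it is the delicate point of the Rao/Tao sharpening that a careful choice of the pruning subset $X$ restores $r$-spread on $[n]\setminus U$ at a cost that fits inside the $\varepsilon k$ budget. Once this is in place, combining the constants in the encoding to obtain precisely the factor $5/\log_2(r\delta)$, rather than a weaker function of $r\delta$, is a routine computation.
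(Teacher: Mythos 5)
The paper does not prove Theorem~\ref{thmtao}; it is imported as a black box from \cite{Alw} and \cite{Tao}, so there is no in-paper proof to compare against. I will therefore assess your sketch against the argument in those sources.

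Your coupling step is fine: the union of $\beta$ independent $\delta$-random sets is stochastically dominated by a $(\beta\delta)$-random set because $1-(1-\delta)^\beta\le\beta\delta$, and since $\{\exists F\in\ff:\,F\subset W\}$ is increasing, a lower bound for the union transfers to $W$. That is exactly the reduction made in the cited proofs.

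The gap is in how you formulate and iterate the one-step lemma. You state it as: with probability at least $1-\varepsilon k$, either some $F$ lands inside $U$ or one can pass to a residual $r$-spread subfamily. You then claim that iterating over $W_1,\ldots,W_\beta$ \emph{multiplies the single-step failure probabilities} to give $\varepsilon^\beta$, with $k$ ``paid only once at the deepest level.'' These two statements are not compatible: multiplying a per-round failure bound of $\varepsilon k$ over $\beta$ rounds gives $(\varepsilon k)^\beta$, a union bound gives $\beta\varepsilon k$, and neither equals $\varepsilon^\beta k$. More fundamentally, the one-step lemma in the Rao/Tao argument is not a high-probability event statement at all. The quantity that is controlled per round is (in expectation) the size of the minimal uncovered part, say $m_i=\min_{F\in\ff_i}|F|$ for the refreshed spread family $\ff_i$ after $i$ rounds; the encoding/counting argument shows $\mathsf{E}[m_{i+1}\mid \mathcal{F}_i]\le\varepsilon\, m_i$, so by the tower property $\mathsf{E}[m_\beta]\le\varepsilon^\beta m_0=\varepsilon^\beta k$, and a \emph{single} application of Markov's inequality gives $\Pr[m_\beta\ge 1]\le\varepsilon^\beta k$, which is the failure event. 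The factor $k$ enters once as $m_0$, not as a per-round union bound over ``pruning steps.'' As written, your iteration scheme does not produce the stated bound from your stated one-step lemma; you would need to replace the per-round probability bound by a per-round expectation contraction (or, equivalently, track the quantity that Tao's entropy argument contracts) and then finish with Markov.

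Separately, the claim that the pruning ``restores $r$-spread on $[n]\setminus U$'' is stronger than what the argument delivers or needs; one only needs a sufficiently spread subfamily on the smaller ground set, and the constant can degrade by a bounded factor which is absorbed into $\varepsilon$. This is a minor point compared to the iteration issue, but worth flagging since you identify it as ``the delicate point'' and then assert a cleaner conclusion than is actually obtained.
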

Stoeckl~\cite{Sto} showed that the constant $5$ can be replaced by $1 + h_2(\delta)$, where $h_2(\delta) = - \delta \log_2 \delta - (1 - \delta) \log_2 (1 - \delta)$ is the binary entropy of $\delta$. The lemma can be summarized as follows ``if a family is locally quasirandom (sufficiently spread), then it is globally quasirandom (we expect to see sets from the family inside a typical subset of the ground set).'' The parameters of interest here are so that $5/\log_2(r\delta) = 1/2$ and $\beta - \log_2 k \to \infty$, so that the probability of the described event becomes close to $1$. The choice of $\beta\delta$, and thus the condition on $r$, depends on the question, but, again, $r = C\log k$ already gives a meaningful conclusion.

An important observation is that any sufficiently large family contains a spread subfamily.
\begin{obs}\label{obs13}
  If $\ff\subset {[n]\choose \le k}$ and  $|\ff|>r^k$, then $\ff(X)$ is $r$-spread for some $X$, $0\le |X|<k$.
\end{obs}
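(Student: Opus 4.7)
The plan is to choose $X$ to be a maximizer of the quantity $r^{|X|}|\ff(X)|$ over all $X\subset [n]$, and check that this $X$ works.

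First I would note that the maximum is attained with $|X|<k$. Indeed, at $X=\emptyset$ we have $r^{0}|\ff(\emptyset)| = |\ff| > r^k$ by hypothesis, whereas if $|X|=k$ then $\ff(X)$ is either empty or consists only of the empty set, so $r^{|X|}|\ff(X)|\le r^k<|\ff|$. Hence an optimizer of size strictly less than $k$ exists, and in particular $\ff(X)$ is a nonempty family sitting on the ground set $[n]\setminus X$.

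Next I would verify the $r$-spread inequality for $\ff(X)$. Given any $Y\subset[n]\setminus X$, an unwinding of the definitions shows that $\ff(X)(Y)=\ff(X\cup Y)$, because a set $F'=F\setminus X\in \ff(X)$ (with $X\subset F\in \ff$) contains $Y$ precisely when $X\cup Y\subset F$, in which case $F'\setminus Y = F\setminus(X\cup Y)$. By the maximality of $X$, $r^{|X\cup Y|}|\ff(X\cup Y)|\le r^{|X|}|\ff(X)|$, which rearranges to $|\ff(X)(Y)|\le r^{-|Y|}|\ff(X)|$. This is precisely the $r$-spread condition.

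There is essentially no hard step; the only thing to watch out for is the bookkeeping around the definition of $\ff(X)$ (in particular that the relevant test sets $Y$ live in the ground set $[n]\setminus X$, so the $\ff(X)(Y)=\ff(X\cup Y)$ identity is valid without any intersection issue) and the verification that some maximizer of $r^{|X|}|\ff(X)|$ satisfies $|X|<k$, which uses the uniformity bound $|F|\le k$ together with the hypothesis $|\ff|>r^k$.
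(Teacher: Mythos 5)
Your proof is correct and takes essentially the same route as the paper: the paper picks an inclusion-maximal $X$ with $|\ff(X)|\ge r^{-|X|}|\ff|$, while you pick a global maximizer of $r^{|X|}|\ff(X)|$, but the verification of $r$-spreadness via maximality is the same, and the bound $|X|<k$ is argued identically.
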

\begin{proof}
  If $\ff$ is $r$-spread then we can put $X = \emptyset$. Otherwise, consider an inclusion-maximal $X$ such that $|\ff(X)|\ge r^{-|X|}|\ff|$. Note that $|X|\le k-1$ because for $|X|=k$ we have $|\ff(X)|=1<r^{-k}|\ff|$. By maximality of $X$, for any $Y$ that is disjoint with $X$ we have $|\ff(X\cup Y)|\le r^{-|X|-|Y|}|\ff|\le r^{-|Y|}|\ff(X)|.$ Thus, $\ff(X)$ is $r$-spread.
\end{proof}
The same proof implies the following
\begin{obs}\label{obs34}
  If $\ff\subset {[n]\choose \le k}$ and $X$ is an inclusion-maximal subset of the ground set so that $|\ff(X)|\ge r^{-|X|}|\ff|$, then  $\ff(X)$ is $r$-spread.
\end{obs}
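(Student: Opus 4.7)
The plan is to replay the argument of Observation~\ref{obs13} almost verbatim. The only role of the hypothesis $|\ff|>r^k$ in the earlier proof is to guarantee that the collection of sets $X\subseteq[n]$ satisfying $|\ff(X)|\ge r^{-|X|}|\ff|$ admits an inclusion-maximal element of size at most $k-1$; here, such a maximal $X$ is handed to us by assumption, so none of that preliminary work is needed.

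First I would make the trivial bookkeeping observation that $\ff(X)$ lives on the new ground set $[n]\setminus X$, so the $r$-spread condition for $\ff(X)$ only has to be verified against test sets $Y\subseteq[n]\setminus X$. For such $Y$, unfolding the definition of the link operation yields the identity $\ff(X)(Y) = \ff(X\cup Y)$.

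The substantive step is then to invoke the maximality of $X$. For any nonempty $Y\subseteq[n]\setminus X$, the set $X\cup Y$ is a proper superset of $X$, so maximality forces $|\ff(X\cup Y)|<r^{-|X|-|Y|}|\ff|$. Combining this with the lower bound $|\ff(X)|\ge r^{-|X|}|\ff|$ that is assumed for $X$ itself, I obtain
$$|\ff(X)(Y)| = |\ff(X\cup Y)| < r^{-|Y|}\cdot r^{-|X|}|\ff| \le r^{-|Y|}|\ff(X)|,$$
which is exactly the $r$-spread condition for $\ff(X)$ at the test set $Y$. The case $Y=\emptyset$ is automatic, so $\ff(X)$ is $r$-spread.

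I do not expect any genuine obstacle --- the authors' comment ``the same proof implies the following'' is essentially literal. The one subtle point worth checking carefully is the reduction to test sets disjoint from $X$; this is what allows the maximality hypothesis to be applied to proper supersets of $X$ rather than to arbitrary sets, and it is the only place where the change of ground set from $[n]$ to $[n]\setminus X$ plays a role.
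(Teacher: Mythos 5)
Your proof is correct and follows exactly the route the paper intends: the paper proves Observation~\ref{obs13} by taking an inclusion-maximal $X$ with $|\ff(X)|\ge r^{-|X|}|\ff|$ and deducing $|\ff(X\cup Y)|\le r^{-|X|-|Y|}|\ff|\le r^{-|Y|}|\ff(X)|$ for $Y$ disjoint from $X$, then states Observation~\ref{obs34} with the remark ``the same proof implies the following.'' Your write-up simply makes that remark explicit, with appropriate care about the empty test set and about test sets meeting $X$ (for which $\ff(X)(Y)=\emptyset$), so there is nothing missing.
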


\section{Simpler bounds}\label{secsimple}
In this section, we treat several particular cases of the HR problem. In particular, in the first subsection we show that the extremal example need not always have the form as in Construction~\ref{const2}. In the second subsection we treat the case $t=k-1$. In the third subsection, we extend the result of Hajnal and Rothschild to a much broader spectrum of parameters (with polynomial dependencies). The bounds on the parameters are more restrictive than in Theorem~\ref{thmmain}, but the proof is much easier and, moreover, it works for a broad spectrum of quasirandom families. (See the subsection for precise definitions.) It is as well based on some parts of the spread approximation method.
\subsection{$3$-uniform families}
\begin{prop}
  Suppose that $\m F\subset {[6]\choose 3}$. Then (i) and (ii) hold
  \begin{itemize}
    \item[(i)] If $\nu(\ff, 2)\le 2$ then $|\m F|\le 10$,
    \item[(ii)] If $\nu(\ff, 2)\le 3$ then $|\m F|\le 14$.
  \end{itemize}
\end{prop}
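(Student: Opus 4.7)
The strategy for both parts exploits the rigid combinatorial structure of the forbidden antichains in $\binom{[6]}{3}$. A short double count, using $\sum_{i=1}^{3}|T_{i}|=9$ against $\sum_{x\in[6]}\binom{f(x)}{2}=\sum_{i<j}|T_{i}\cap T_{j}|\le \binom{3}{2}=3$, forces any three triples pairwise intersecting in at most one element to have all three pairwise intersections of size exactly $1$ and degree sequence $(2,2,2,1,1,1)$ on $[6]$, with the three pairwise intersections serving as the three ``central'' elements. Analogously, any four triples pairwise intersecting in at most one element form a Pasch configuration: each element of $[6]$ lies in exactly two triples, all pairwise intersections equal $1$, and the three pairs of $[6]$ not covered by the four triples form a perfect matching.

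The plan is to apply the standard shifting (which preserves $|\m F|$ and does not increase $\nu(\m F,2)$) and split $\m F=\m F_{1}\sqcup \m F_{\bar 1}$, where $\m F_{1}=\m F[\{1\}]$ consists of the triples of $\m F$ containing $1$ and $\m F_{\bar 1}\subset \binom{\{2,\ldots,6\}}{3}$ is the rest. Writing $M$ for the set of triples through $1$ that are \emph{missing} from $\m F$ (so that $|\m F|=10-|M|+|\m F_{\bar 1}|$), I would identify $M$ with the graph $G_{M}$ on vertex set $\{2,\ldots,6\}$ in which $\{1,u,v\}\in M$ is recorded as the edge $\{u,v\}$, and then bound $|\m F_{\bar 1}|$ in terms of $|M|$ by analysing the constraints imposed by antichain avoidance.

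For (i), a $3$-antichain meeting both $\m F_{1}$ and $\m F_{\bar 1}$ comes in two flavours: (a) two triples $A_{1},A_{2}\in \m F_{1}$ together with one $B\in \m F_{\bar 1}$, or (b) one $A\in \m F_{1}$ together with two $B_{1},B_{2}\in \m F_{\bar 1}$. For (a), given $B\in \m F_{\bar 1}$ the six triples $\{1,b,c\}$ with $b\in B$, $c\in \{2,\ldots,6\}\setminus B$ form a $6$-cycle under the ``share at most one element'' relation, and each edge of this $C_{6}$ paired with $B$ gives a $3$-antichain; since the minimum vertex covers of $C_{6}$ are exactly the two ``columns'' $\{\{1,b,c\}:b\in B\}$ for $c\in\{2,\ldots,6\}\setminus B$, avoiding these $3$-antichains translates to $B\subseteq N_{G_{M}}(c)$ for some $c\notin B$. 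For (b), the four candidate $A$'s are $\{1,u,v\}$ with $u\in B_{1}\setminus\{x\}$, $v\in B_{2}\setminus\{x\}$ (where $x=B_{1}\cap B_{2}$), so any pair $B_{1},B_{2}\in \m F_{\bar 1}$ with $|B_{1}\cap B_{2}|=1$ forces $G_{M}$ to contain the complete bipartite graph on these two pairs. A case analysis over graphs $G_{M}$ with $|M|$ edges on five vertices, combining these two constraints, gives $|\m F|\le 10$.

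For (ii), the analogous step concerns Pasch completions. Since element $1$ has degree exactly $2$ in any Pasch, a Pasch in $\m F$ uses exactly two triples from $\m F_{1}$ and two from $\m F_{\bar 1}$; for each pair $B_{1},B_{2}\in \m F_{\bar 1}$ with $|B_{1}\cap B_{2}|=1$ (equivalently $B_{1}\cup B_{2}=\{2,\ldots,6\}$), there are exactly two pairs of triples through $1$ completing $(B_{1},B_{2})$ to a Pasch, and avoiding a Pasch requires $M$ to hit each of these two $A$-pairs. When $M=\emptyset$ this forces $\m F_{\bar 1}$ to be $2$-intersecting in $\binom{\{2,\ldots,6\}}{3}$, and the Ahlswede--Khachatrian theorem (or a direct check for $n=5,k=3,t=2$) gives $|\m F_{\bar 1}|\le 4$, so $|\m F|\le 14$. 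For $M\ne \emptyset$ a similar case analysis on the pair-parts of $M$ shows that each new non-$2$-intersecting pair admissible in $\m F_{\bar 1}$ costs at least one extra element of $M$, yielding $|\m F_{\bar 1}|\le 4+|M|$ in general and hence $|\m F|\le 14$. The main technical obstacle in both parts is the precise bookkeeping in the final case analysis; the rigid structure of the antichains (the $C_{6}$ plus $K_{2,2}$ constraints for (i), the two-option Pasch completion for (ii)) is what keeps these enumerations tractable.
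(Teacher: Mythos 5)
Your structural preliminaries are correct: a $3$-antichain in $\binom{[6]}{3}$ must have all three pairwise intersections of size exactly~$1$ with degree sequence $(2,2,2,1,1,1)$ (a disjoint pair leaves no room for a third triple in $[6]$), and a $4$-antichain must be a Pasch, by the same double count. The $C_{6}$ observation for case~(a) and the ``two Pasch completions per non-$2$-intersecting pair'' count are also right. But the proposal stops short of a proof: in both parts the crucial step is handed off to an unexecuted enumeration (``a case analysis over graphs $G_{M}$ with $|M|$ edges on five vertices\ldots gives $|\ff|\le 10$'' and ``a similar case analysis\ldots yielding $|\ff_{\bar 1}|\le 4+|M|$''). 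These are precisely the places where the argument has to close, and they are not carried out, so as written this is an outline rather than a proof. A further small imprecision for (a): avoiding all $3$-antichains $\{A_1,A_2,B\}$ forces $M$ to contain a vertex cover of the $C_{6}$, not necessarily one of the two ``columns''; the columns are only the \emph{minimum} covers, so the translation ``$B\subseteq N_{G_M}(c)$ for some $c\notin B$'' is not quite the right invariant for the subsequent bookkeeping.

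The paper's route is both different and substantially shorter. It splits on whether $\ff$ is intersecting: if yes, the Erd\H os--Ko--Rado bound gives $|\ff|\le 10$ directly; if not, fix a disjoint pair $(1,2,3),(4,5,6)\in\ff$ and partition the remaining $18$ triples into six blocks of three ($\m P_1,\m P_2,\m P_3$ and their complements $\m R_1,\m R_2,\m R_3$), each block being a $3$-antichain compatible with $(4,5,6)$, so that each block plus $(4,5,6)$ is a $4$-antichain. Then $|\ff\cap\m P_i|\le 2$ (for $\nu\le 3$) or $\le 1$ (for $\nu\le 2$), giving $|\ff|\le 2+6\cdot 2=14$ and $|\ff|\le 2+6\cdot 1=8<10$ respectively, with no graph-theoretic encoding, no shifting, and no external theorem beyond EKR. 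If you want to pursue your approach, you would need to actually perform the $G_{M}$ case analysis; alternatively, note that the paper's partition into six $3$-antichains sidesteps that bookkeeping entirely.
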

\begin{proof}
  If $\m F$ is intersecting then by the EKR theorem $|\m F|\le 10$. Assume next that $\m F$ is not intersecting, and w.l.o.g. $(1,2,3), (4,5,6)\in \ff$. For the remaining $18$ triples in $[6]$, let us partition them into $6$ groups of $3$ triples each:
  \begin{align*}
    \m P_1 &=\{(1,2,4),(2,3,5),(1,3,6)\}, \\
    \m P_2 &=\{(1,2,5),(2,3,6),(1,3,4)\}, \\
    \m P_3 &=\{(1,2,6),(2,3,4),(1,3,5)\}, \\
    \m R_i &=\{[6]\setminus T: T\in \m P_i\}, i\in[3].
  \end{align*}
  Note that $\m P_i\cup\{(4,5,6)\}\subset \ff$ would contradict $\nu(\ff,2)\le 3$. Hence, $|\ff\cap \m P_i|\le 2$ and similarly $|\ff\cap \m R_i|\le 2$. Consequently, $|\ff|\le 2+6\times 2 = 14$, proving (i).

  In the same way, in the case (i), $|\ff\cap \m P_i|\le 1$, $|\ff\cap \m R_i|\le 1$ and thereby $|\ff|\le 8$, i.e., $|\ff|<10$ if $\ff$ is not intersecting.
\end{proof}
Note that both the full star and ${Q\choose 3}$ with $|Q|=5$ show that (i) is best possible. In the case (ii) one can add all triples containing $(5,6)$ to ${[5]\choose 3}$ to provide an example that shows that $14$ is the maximum. Importantly, none of these families are of the form $\aaa[\m K]$ as in Construction~\ref{const2}.

\subsection{The $(k,k-1)$ case}
In this subsection, we study families $\ff\subset {[n]\choose k}$ with $\nu(\ff,k-1)\le s$.
If $n\ge 2s+k$ then we may fix $Q_1,\ldots Q_s$ such that $|Q_i| = k-1$ and $|Q_i\cap Q_j|\le k-3$ for each $i\ne j\in[s]$. Then the family $\aaa':=\aaa(\{Q_1,\ldots, Q_s\})$ has size $s(n-k+1)$ and satisfies $\nu(\aaa',k-1)\le s$.

\begin{thm}
  Let $n\ge 2k+2s-4+\max\{2s,k\}$. Assume that $\ff\subset {[n]\choose k}$ satisfies $\nu(\ff,k-1)\le s$. Then $|\ff|\le s(n-k+1)$.
\end{thm}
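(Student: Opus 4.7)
My plan is to induct on $s$. The base case $s=1$ asserts that a $(k-1)$-intersecting family in $\binom{[n]}{k}$ has size at most $n-k+1$ for $n \ge 2k$, a classical fact since the only $(k-1)$-intersecting extremal examples are the star $\aaa[Q]$ with $Q\in\binom{[n]}{k-1}$ and the cluster $\binom{R}{k}$ with $|R|=k+1$, both of size $\le n-k+1$ once $n\ge 2k$. The inductive step splits on $D:=\max_{Q\in\binom{[n]}{k-1}}|\ff[Q]|$.

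\textbf{Peeling case ($D\ge 2s+1$).} Pick $Q^*$ attaining the maximum and let $\ff':=\ff\setminus\ff[Q^*]$. I would prove $\nu(\ff',k-1)\le s-1$ by contradiction: given pairwise $(k-1)$-separated $F'_1,\dots,F'_s\in\ff'$, the set $G:=Q^*\cup\{g\}$ can fail to be $(k-1)$-separated from $F'_i$ only when $|Q^*\cap F'_i|=k-2$ and $g\in F'_i\setminus Q^*$, giving at most $2$ bad $g$'s per $F'_i$ and at most $2s$ overall; since there are $D\ge 2s+1$ choices of $g$ in $\{x\in[n]\setminus Q^*:Q^*\cup\{x\}\in\ff\}$, a valid $g$ survives, producing an $(s+1)$-witness in $\ff$ and contradicting the hypothesis. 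The inductive hypothesis applied to $\ff'$ then yields $|\ff|\le |\ff[Q^*]|+(s-1)(n-k+1)\le s(n-k+1)$.

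\textbf{Partition case ($D\le 2s$).} Write $r:=\nu(\ff,k-1)$. If $r<s$, induction with parameter $r$ already finishes, so assume $r=s$ with a witness $F_1,\dots,F_s\in\ff$ and, for each $F\in\ff$, choose some $i(F)$ with $|F\cap F_{i(F)}|\ge k-1$, inducing a partition $\ff=\ff_1\sqcup\cdots\sqcup\ff_s$. I aim to choose $i(\cdot)$ so that each $\ff_i$ is $(k-1)$-intersecting; the base case then gives $|\ff_i|\le n-k+1$ and the desired bound follows by summing. The argument that $F,G\in\ff_i$ must satisfy $|F\cap G|\ge k-1$ is: otherwise $\{F,G\}\cup\{F_j:j\ne i\}$ is an $(s+1)$-witness, \emph{unless} $F$ or $G$ is \emph{doubly-close}, i.e.\ satisfies $|F\cap F_j|\ge k-1$ also for some $j\ne i$. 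Intersection arithmetic pins doubly-close sets down rigidly: they force $|F_i\cap F_j|=k-2$ and have the form $F=(F_i\cap F_j)\cup\{a,b\}$ with $a\in F_i\setminus F_j$, $b\in F_j\setminus F_i$, giving at most $4$ such $F$ per pair $(i,j)$, and each one sits inside both stars $\aaa[(F_i\cap F_j)\cup\{a\}]$ and $\aaa[(F_i\cap F_j)\cup\{b\}]$. I assign each doubly-close $F$ to whichever side matches the star or cluster structure of $\ff_i$ or $\ff_j$, keeping each piece $(k-1)$-intersecting.

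\textbf{Main obstacle.} The real work is making the assignment of doubly-close sets consistent: when several such $F$'s are associated with the same pair $(i,j)$, one must split them between $\ff_i$ and $\ff_j$ so that each piece extends to a single $(k-1)$-star (or to a $(k+1)$-cluster) and thereby stays $(k-1)$-intersecting. The hypothesis $n\ge 2k+2s-4+\max\{2s,k\}$ is calibrated so that the peeling threshold $D\ge 2s+1$ is meaningful, the base-case requirement $n\ge 2k$ holds, and there is enough room in each piece's $(n-k+1)$-bound to absorb the $O(s^2)$ doubly-close sets arising across the $\binom{s}{2}$ pairs.
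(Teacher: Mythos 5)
Your peeling case ($D\ge 2s+1$) is correct and is, modulo packaging, exactly the paper's sunflower reduction: for a $(k-1)$-set $Q^*$ covered $\ge 2s+1$ times, at most two extensions $Q^*\cup\{g\}\in\ff$ are bad against any fixed $F'_i$ in a putative witness of $\ff\setminus\ff[Q^*]$ (bad requires $|Q^*\cap F'_i|=k-2$ and $g\in F'_i\setminus Q^*$), leaving a good $g$ and producing an $(s+1)$-witness; this is precisely the content of the paper's lemma that one may assume $\ff$ has no $(2s+1,k-1)$-sunflower, so $|\ff[Q^*]|\le n-k+1$ together with induction on $\ff'$ finishes.

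The partition case ($D\le 2s$) has a genuine gap, and it is the one your ``main obstacle'' paragraph names but does not close. You must assign each multiply-close $F$ to some $\ff_i$ so that every $\ff_i$ stays $(k-1)$-intersecting, but this is not mere bookkeeping: a doubly-close $F$ lying in $\aaa[F_i]\cap\aaa[F_j]$ can be $(k-1)$-separated both from a singly-close $G$ that must go to $\ff_i$ and from a singly-close $G'$ that must go to $\ff_j$, and the would-be witness $\{F,G,G'\}\cup\{F_l:l\ne i,j\}$ breaks down whenever $|G\cap G'|\ge k-1$, so no contradiction is reached; in addition, triply-close sets exist (forcing $|F_i\cap F_j\cap F_l|=k-3$), so the ``at most $4\binom{s}{2}$ rigid shapes'' accounting is incomplete, and it is unclear what ``absorbing $O(s^2)$ exceptions'' means quantitatively once some $\ff_i$ fails to be $(k-1)$-intersecting. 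The paper avoids the consistency problem altogether: after the sunflower reduction (your peeling step) it proves a separate w.l.o.g.\ reduction $|Y|=|\cup_i P_i|\le k+2s-2$, bounds $|\ff\cap\binom{Y}{k}|\le s|Y|$ by partitioning $\binom{Y}{k}$ into $|Y|$ modular classes $\{(a_1,\dots,a_k):\sum_j a_j\equiv i\ (\mathrm{mod}\ |Y|)\}$ which are internally pairwise $(k-1)$-separated, and bounds the sets meeting $Z=[n]\setminus Y$ by $s\max\{2s-1,k-1\}$ since each such set lies in one of $s$ small stars or clusters attached to the $P_i$; no assignment choices are needed in that decomposition. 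If you want to salvage the partition route, you would need a lemma ruling out the offending configuration $(F,G,G')$, and that seems to require at least as much work as the paper's reduction of $|Y|$.
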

\begin{proof}
The proof is by induction on $s$. For $s=1$ this is simply an extremal statement concerning $k-1$-intersecting families.

  Take the largest family $\ff$ as in the theorem. Then we can fix $P_1,\ldots, P_s$ such that $|P_i\cap P_j|\le k-2$, $1\le i<j\le s$. Let $Y = P_1\cup \ldots\cup P_\ell$ and $v:=|Y|$. Set $Z = [n]\setminus Y$.

\begin{lem}\label{lemandr1}
 We may w.l.o.g. assume $|Y|\le k+2s-2$.
\end{lem}
\begin{proof}
  Consider a graph $X$ with $V(X) = \{P_1,\ldots, P_s\}$ and $E(X) = \{(P_i,P_j): |P_i\cap P_j| = k-2\}$.

  {\bf Case 1.} The graph $X$ is connected. Then take a spanning tree $T\subset X$ and w.l.o.g. assume that, for any $i$, $T\cap \{P_1,\ldots, P_s\}$ is connected. Then, it is easy to see by induction on $i$ that $|\cup_{j=1}^i P_j|\le k+2i-2$, and the lemma follows.

  {\bf Case 2.} The graph $X$ is not connected. W.l.o.g., let $i$ be such that for all $j_1,j_2\in [s]$ such that $j_1\le i<j_2$ we have $|P_{j_1}\cap P_{j_2}|\le k-3$. Then any set $F\in \ff$ cannot simultaneously satisfy $|F\cap G_{j_1}|\ge k-1$ and $|F\cap G_{j_2}|\ge k-1$ with some $j_1\le i<j_2$.

  Partition $\ff = \ff_1\sqcup \ff_2$, where
  $$\ff_1:=\big\{F\in \ff: \exists j\le i: |F\cap G_j|\ge k-1\big\},$$
  $$\ff_2:=\big\{F\in \ff: \exists j> i: |F\cap G_j|\ge k-1\big\}.$$
It is easy to see that $\nu(\ff_1,k-1)\le i$ and $\nu(\ff_2, k-1)\le s-i$. By induction, we may bound $|\ff_1|\le i(n-k+1)$ and $|\ff_2|\le (s-i)(n-k+1)$. Together, this implies the bound $|\ff|\le s(n-k+1)$.
\end{proof}

An {\it $(m,k-1)$-sunflower} is a collection of $m$ sets $A_1,\ldots, A_m$ of size $k$ s.t. $|\cap_{i=1}^m A_i| = k-1$.

\begin{lem}
  We may w.l.o.g. assume that $\ff$ has no $(2s+1,k-1)$-sunflower.
\end{lem}
\begin{proof}
  If $\m G$ is an inclusion-maximal $(m,k-1)$-sunflower $A_1,\ldots, A_m$, and $m\ge 2s+1$, then
  $$\nu(\ff\setminus \m G,k-1)\le s-1.$$
  Indeed, arguing indirectly, let $B_1,\ldots, B_s$ be a collection of sets in $\ff\setminus \m G$ with $|B_i\cap B_j|\le k-2$ for $i\ne j$. Denote $X:=\cap A_i$. Since $\m G$ is inclusion-maximal, we have $|B_i\cap X|\le k-2$ for $i\in [s]$. If for some $i$ we have $|B_i\cap X|\le k-3$, then $|B_i\cap A_j|\le k-2$ for all $j\in[m]$. If $|B_i\cap X|=k-2$ then there are at most two sets $A_{i_1}, A_{i,2}\in \m G$ such that $|B_i\cap A_{i_\ell}|=k-1$. Given that $m\ge 2s+1$, we can find $j\in [m]$ such that $|B_i\cap A_j|\le k-2$ for all $i\in[s]$. This contradicts the property $\nu(\ff,k-1)=s$.

  The displayed inequality by induction implies $|\ff\setminus \m G|\le (s-1)(n-k+1)$. Moreover, obviously $m\le n-k+1$, and we get the bound $|\ff|\le s(n-k+1)$.
\end{proof}

Put $Z:=[n]\setminus \cup_{i=1}^sP_i$. Note that, for each $z\in Z$ and $F\in\ff$, such that $z\in F$, there is $i$ such that $F\setminus \{z\}\subset G_i$.
Let us define $\ff_i:=\{G_i\}\cup \{F\in \ff: F\cap Z\ne \emptyset, F\setminus Z\subset G_i\}$. Then, $\ff_i$ is $k-1$-intersecting. Indeed, if there are $F, F'\in \ff_i$ with $|F\cap F'|\le k-2$, then  $F, F', P_j$, $j\in [s]\setminus\{i\}$ form a $k-1$-matching, a contradiction with $\nu(\ff,k-1)\le s$. It is thus easy to see that either $\ff_i\subset {G_i\cup \{z\}\choose k}$ for some $z\in Z$, or $\ff_i$ is an $(m,k-1)$-sunflower. But in the latter case, by the last lemma we know that $|\ff_i|\le 2s$. In any case, we have
$$|\ff_i|\le \max \{2s,k\}.$$
We conclude that
\begin{equation}\label{eqcompbound}|\ff|\le \Big|\ff\cap {Y\choose k}\Big|+s\max\{2s-1,k-1\}.\end{equation}
To bound the former term, we use the following lemma.
\begin{lem}\label{lempet4}
  We have $\big|\ff\cap {Y\choose k}\big| \le s|Y|$.
\end{lem}
  \begin{proof}
Put $r:=|Y|$ and partition all $k$-element sets in ${[r]\choose k}$ into the following $r$ groups:
    $$\m V_i:=\{(a_1,\ldots, a_k): a_1+\ldots+a_k = i \mod r\}.$$
    Note that any two sets from $\m V_i$ intersect in at most $k-2$ elements, and thus any family $\ff\subset {[r]\choose k}$ with $\nu(\ff,k-1)\le s$ satisfies $|\ff\cap \m V_i|\le s$ for each $i\in[r]$. Thus, $|\ff|\le rs$.
  \end{proof}

Substituting in \eqref{eqcompbound} the conclusions of Lemma~\ref{lempet4} and~\ref{lemandr1}, we get that $|\ff|\le s\big(k+2s-2+\max\{2s-1,k-1\}\big)$. Comparing it with the expression $s(n-k+1)$, we get that the latter is not smaller if $n\ge 2k+2s-4+\max\{2s,k\}$.
\end{proof}
\subsection{Peeling and a better bound in Theorem~\ref{thmhr}}
In this subsection, exceptionally, $\aaa$ stands not only for ${[n]\choose k}$, but for a class of `ambient' families. We say that a family $\s$ with $\nu(\s, t)= s$ is {\it non-trivial } if it is not of the form $\aaa[\m M]$ for a family $\m M$ that consists of $s$ sets of size $t$. We shall work with sufficiently quasirandom ambient families, and this quasirandomness is expressed in the notion of {\it $(r,f)$-spreadness}. Given a real number $r>1$ and a non-negative integer $f$, we say that a family $\aaa$ is {\it $(r,f)$-spread} if $\aaa(T)$ is $r$-spread for each $T$ such that $|T|\le f$. We say that a family $\T$ with $\nu(\T, t)=s$ is {\it maximal} if for any $T \in \T$ and any proper subset $X \subsetneq T$ the family $\T':=\T\setminus \{T\}\cup \{X\}$ we have $\nu(\T',t)>s$.

The following theorem is the main result of this subsection.
\begin{thm}\label{thmapproxtrivial}
Let $\varepsilon\in (0,1]$ and integers $n,r,q,m, t \ge 1$  be such that $\varepsilon r\ge 8esq$ and $q\ge t$.
Let $\aaa \subset 2^{[n]}$ be an $(r, t)$-spread family and let $\s \subset {[n] \choose \le q}$ be a maximal non-trivial family with $\nu(\s, t)=s$. Assume that $\s$ contains exactly $\ell$ sets $U_1,\ldots, U_\ell$ of size $t$.
Then there exists a $t$-element set $U$ such that
$|\aaa[\s]| \le \Big|\bigcup_{j=1}^\ell \aaa[U_j]\Big|+\varepsilon(m-\ell) |\aaa[U]|$.
\end{thm}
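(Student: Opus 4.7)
The plan is to decompose $|\aaa[\s]|$ via a union bound, splitting $\s$ into its $t$-sized members $\{U_1,\ldots,U_\ell\}$ and the ``large'' members $\s' := \{S\in\s : |S|>t\}$ (of which there are at most $m-\ell$). This yields
\[
|\aaa[\s]| \;\le\; \Big|\bigcup_{j=1}^\ell \aaa[U_j]\Big| \;+\; \sum_{S\in \s'} |\aaa[S]|,
\]
so it suffices to bound $\sum_{S\in \s'} |\aaa[S]|$ by $\varepsilon(m-\ell)|\aaa[U]|$ for a suitably chosen $t$-set $U$.

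For this bound I rely directly on the $(r,t)$-spreadness of $\aaa$. Given any $S\in\s'$ and any $t$-subset $T\subset S$, the family $\aaa(T)$ is $r$-spread by hypothesis (since $|T|=t$); applied with $Y:=S\setminus T$ this gives
\[
|\aaa[S]| \;=\; |\aaa(T)(Y)| \;\le\; r^{-|Y|}|\aaa[T]| \;\le\; r^{-1}|\aaa[T]|,
\]
the last step using $|S|>t$. Choosing $U$ to maximize $|\aaa[U']|$ over all $t$-sets $U'\subset[n]$ promotes this into a uniform estimate $|\aaa[S]|\le r^{-1}|\aaa[U]|$. Summing over $S\in\s'$ and invoking $r\varepsilon\ge 8esq\ge 1$ yields $\sum_{S\in\s'}|\aaa[S]|\le (m-\ell)r^{-1}|\aaa[U]|\le \varepsilon(m-\ell)|\aaa[U]|$, which combined with the union bound completes the sketch.

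The main obstacle with this simple argument is that it uses only the very mild $r\varepsilon\ge 1$, whereas the stated hypothesis is the much stronger $r\varepsilon\ge 8esq$, and it never touches the maximality or non-triviality of $\s$. This discrepancy signals that the intended proof likely imposes a structural constraint on $U$---for instance requiring $U$ to come from a ``witness matching'' that maximality associates to some $S\in\s'$, so that non-triviality prevents $U$ from being absorbed into the main term $\bigl|\bigcup_j \aaa[U_j]\bigr|$---or else iterates a peeling step via the spread lemma (Theorem~\ref{thmtao}) once per element of $\s'$, accumulating an $O(sq)$ overhead from union-bounding over the $s$ witness sets and the ${q\choose t}$ possible $t$-subsets. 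Identifying the correct $U$ and carrying out that bookkeeping is where I expect the real technical difficulty to lie.
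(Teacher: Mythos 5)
The proposal has a genuine gap at the very first step, in the claim that $\s'=\{S\in\s:|S|>t\}$ has at most $m-\ell$ members. Reading the proof and the Corollary~\ref{corapproxtrivial} application, $m$ is a typo for $s$, so the conclusion should read $\varepsilon(s-\ell)|\aaa[U]|$; but a maximal family $\s\subset{[n]\choose \le q}$ with $\nu(\s,t)=s$ can contain far more than $s$ sets of size $>t$. For instance with $t=1$, $s=1$, $q=k$, any maximal intersecting family $\s$ containing ${[2k-1]\choose k}$ has $|\s'|\ge{2k-1\choose k}$. Consequently the union bound $\sum_{S\in\s'}|\aaa[S]|\le(m-\ell)r^{-1}|\aaa[U]|$ is off by an unbounded factor, and no strengthening of the hypothesis $\varepsilon r\ge 8esq$ alone can rescue it without a structural handle on $\s$.

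The paper's proof handles exactly this issue through the peeling procedure. It defines a nested sequence of simplifications $\T_q\supset\W_q$, $\T_{q-1}\supset\W_{q-1}$, \ldots, where $\W_i$ is the layer of $i$-sets in $\T_i$, and then proves (Lemma~\ref{lemkeyred}~(iii)) the crucial cardinality bound $|\W_i|\le(s-\ell)(2esq)^{i-t}$, which is \emph{not} a trivial count of $\s'$ but an inductive consequence of the fact that no large spread subfamily can hide inside $\W_i$ without forcing a $t$-matching of size $s+1$. The final decomposition \eqref{eqdecomp} replaces your single union bound by a layered one, where the growth $(2esq)^{i-t}$ of each layer is beaten by the spreadness-induced decay $r^{-(i-t)}$, and this is precisely where the full strength of $\varepsilon r\ge 8esq$ is consumed. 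A further piece you do not account for is the term $\sum_{j=\ell+1}^{s'}|\aaa[\T_{f(j)+1}[U_j]]|$ for the $t$-sets $U_{\ell+1},\ldots,U_{s'}$ created \emph{during} peeling (not present in $\s$ itself); these are controlled through a cover-number bound $\tau(\T_i(U_j))\le qs$ (Lemma~\ref{lemkeyred}~(iv)), again relying on maximality. Your instinct at the end---that the argument ``iterates a peeling step, accumulating an $O(sq)$ overhead''---correctly anticipates the shape of the real proof, but the proposal as written does not carry it out, and the stated union bound is false.
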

Before we go on to the proof of this theorem, let us deduce the corollary that would be necessary for the proof of the main theorem.
\begin{cor}\label{corapproxtrivial}
In terms of Theorem~\ref{thmapproxtrivial},  if $\aaa = {[n]\choose k}$ for $n\ge 24eskq$ and $\s\subset{[n]\choose \le q}$ is a non-trivial family with $\nu(\s,t)= s$ then $|\aaa[\s]|\le h'(n,k,t,s)-\frac 12 {n-t\choose k-t}$.

In particular, the conclusion of Theorem~\ref{thmhr} holds for any $n\ge 24esk^2$.
\end{cor}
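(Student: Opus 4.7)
The plan is to apply Theorem~\ref{thmapproxtrivial} to $\aaa=\binom{[n]}{k}$ and then convert its output into a comparison with $h'(n,k,t,s)$. First I would verify the spreadness hypothesis: for $T$ with $|T|=t'\le t$, the ratio $|\aaa(T\cup X)|/|\aaa(T)|=\prod_{i=0}^{|X|-1}(k-t'-i)/(n-t'-i)\le((k-t)/(n-t))^{|X|}$ shows $\aaa$ is $(r,t)$-spread with $r=(n-t)/(k-t)\ge n/k$. The hypothesis $n\ge 24eskq$ then gives $r\ge 24esq$, so setting $\varepsilon=1/3$ satisfies $\varepsilon r\ge 8esq$. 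Invoking Theorem~\ref{thmapproxtrivial} (with the parameter $m=s$) produces a $t$-set $U$ such that
\[
|\aaa[\s]|\le\Big|\bigcup_{j=1}^\ell\aaa[U_j]\Big|+\tfrac{s-\ell}{3}\binom{n-t}{k-t}.
\]

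Next I would show $s-\ell\ge 1$ and bound the union. Maximality of $\s$ forces it to be an antichain. If $\ell=s$, the $t$-sets $U_1,\dots,U_s$ already witness $\nu(\s,t)=s$, and any further $V\in\s$ with $|V|>t$ would need $|V\cap U_j|\ge t$, i.e.\ $U_j\subset V$, for some $j$ (else $\{V,U_1,\dots,U_s\}$ would witness $\nu(\s,t)\ge s+1$), contradicting antichain; hence $\s=\{U_1,\dots,U_s\}$ would be trivial, contrary to assumption, so $\ell\le s-1$. By Lemma~\ref{lem22} applied with $x_j=0$ (singleton cliques), $|\bigcup_{j=1}^\ell\aaa[U_j]|\le h'(n,k,t,\ell)$. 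The key comparison is a two-term truncation of the inclusion--exclusion formula for $h'$:
\[
h'(n,k,t,s)-h'(n,k,t,\ell)\ge (s-\ell)\binom{n-t}{k-t}-\Big[\binom{s}{2}-\binom{\ell}{2}\Big]\binom{n-2t}{k-2t},
\]
and since $\binom{n-2t}{k-2t}/\binom{n-t}{k-t}\le(k/n)^t\le 1/(24esq)$ and $\binom{s}{2}-\binom{\ell}{2}\le s(s-\ell)$, the correction is at most $(s-\ell)/(24eq)\cdot\binom{n-t}{k-t}$. Combining with $s-\ell\ge 1$ yields $|\aaa[\s]|\le h'(n,k,t,s)-(s-\ell)\bigl(\tfrac{2}{3}-\tfrac{1}{24eq}\bigr)\binom{n-t}{k-t}\le h'(n,k,t,s)-\tfrac12\binom{n-t}{k-t}$.

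For the ``in particular'' statement, given $\ff\subset\binom{[n]}{k}$ with $\nu(\ff,t)\le s$, I would iteratively replace sets of $\ff$ by proper subsets whenever doing so preserves $\nu(\cdot,t)$, obtaining a maximal $\s\subset\binom{[n]}{\le k}$ with $\ff\subset\aaa[\s]$ and $s^*:=\nu(\s,t)=\nu(\ff,t)\le s$. With $q=k$ the hypothesis $n\ge 24esk^2$ supplies the first part's condition (applied with $s^*$ in place of $s$): if $\s$ is simply the $s^*$-matching of $t$-sets witnessing $s^*$, then $|\ff|\le|\aaa[\s]|\le h'(n,k,t,s^*)\le h'(n,k,t,s)$ by monotonicity of $h'$ in the last argument; otherwise the first part gives $|\ff|\le h'(n,k,t,s^*)-\tfrac12\binom{n-t}{k-t}<h'(n,k,t,s)$. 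The main obstacle in the whole argument is not any single step but the simultaneous bookkeeping of the Bonferroni corrections to $h'$ against the $\varepsilon(s-\ell)$ slack coming out of Theorem~\ref{thmapproxtrivial}; the precise form $n\ge 24eskq$ is exactly what drives the correction below the $\tfrac12\binom{n-t}{k-t}$ threshold.
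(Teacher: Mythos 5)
Your argument is correct in substance and follows the same overall route as the paper: verify the spreadness of $\binom{[n]}{k}$, invoke Theorem~\ref{thmapproxtrivial} with $\varepsilon=1/3$, note that non-triviality forces $s-\ell\ge 1$ (a point the paper leaves implicit but which is genuinely needed, so it is good that you spelled it out), and then close the gap between $\big|\bigcup_{j=1}^\ell\aaa[U_j]\big|$ and $h'(n,k,t,s)$ with a Bonferroni-type estimate. The one place where you take a slightly different path is the last step: the paper adjoins $s-\ell$ fresh pairwise-disjoint $t$-sets $U_{\ell+1},\dots,U_s$ to the \emph{actual} $U_1,\dots,U_\ell$ from $\s$, bounds $|\aaa[U_j]\setminus\bigcup_{i<j}\aaa[U_i]|\ge\binom{n-t}{k-t}-(s-1)t\binom{n-t-1}{k-t-1}\ge\frac56\binom{n-t}{k-t}$ directly, and only then invokes Lemma~\ref{lem22} once to get $|\aaa[\mathcal U]|\le h'(n,k,t,s)$; you instead apply Lemma~\ref{lem22} first to replace $\big|\bigcup_{j=1}^\ell\aaa[U_j]\big|$ by $h'(n,k,t,\ell)$ and then compare $h'$ at $\ell$ and at $s$. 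Both variants reduce to the same two-term inclusion--exclusion and give the same numerics (and your identity $\binom{s-\ell}{2}+(s-\ell)\ell=\binom{s}{2}-\binom{\ell}{2}$, which the calculation implicitly rests on, is correct), so this is a matter of bookkeeping rather than a different method.

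One small slip worth flagging: $\binom{[n]}{k}$ is \emph{not} $\big(\tfrac{n-t}{k-t},t\big)$-spread. Already at $T=\emptyset$ one has $|\aaa(\{x\})|/|\aaa|=k/n$, and $k/n\le(k-t)/(n-t)$ fails for $t\ge1$, $k<n$; the map $a\mapsto(n-a)/(k-a)$ is increasing in $a$, so the binding case across all $|T|\le t$ is $|T|=0$, giving $r=n/k$, which is what the paper uses. Your inequality $\prod_{i=0}^{|X|-1}\frac{k-t'-i}{n-t'-i}\le\big(\frac{k-t}{n-t}\big)^{|X|}$ only holds once $t'+i\ge t$. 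This does not affect the rest of your proof, since all you need is $\varepsilon r\ge 8esq$, and $r=n/k\ge 24esq$ follows directly from $n\ge 24eskq$.
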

\begin{proof}
  First, we note that, given sets $U_1,\ldots, U_s$, each of size $t$, and such that $U_s$ is disjoint with $U_1,\ldots, U_{s-1}$, we have
  \begin{multline}\label{mult1}|\aaa[U_s]\setminus (\cup_{i=1}^{s-1}\aaa[U_i])|\ge {n-t\choose k-t}-(s-1)t{n-t-1\choose k-t-1} \\
   = (1-\frac{(s-1)t(k-t)}{n-t}\big){n-t\choose k-t}\ge \frac 56{n-t\choose k-t},\end{multline}
  provided $n-t\ge 6(s-1)t(k-t)$. This is satisfied because of the imposed bound on $n$, since $q\ge t$.

  It is easy to check that $\aaa$ is $(\frac nk, t)$-spread. Assume that $\s$ is maximal and contains $\ell$ $t$-element sets $U_1,\ldots, U_\ell$. Since $\frac 13\cdot \frac nk \ge 8eskq$, we can use Theorem~\ref{thmapproxtrivial} with $\epsilon = \frac 13$ and $r=\frac nk$ and get that
  \begin{equation}\label{eq123}|\aaa[\s]| \le \Big|\bigcup_{j=1}^\ell \aaa[U_j]\Big|+\frac13(s-\ell){n-t\choose k-t}.\end{equation}
  Therefore, if we consider a family $\mathcal U:=\{U_1,\ldots, U_s\}$, where $U_1,\ldots, U_\ell$ are as in $\s$ and $U_{\ell+1},\ldots, U_s$ are pairwise disjoint and are disjoint from $U_1,\ldots, U_\ell$, by \eqref{mult1} and \eqref{eq123} we get that
  $$|\aaa[\mathcal U]| \ge \Big|\bigcup_{j=1}^\ell \aaa[U_j]\Big|+\frac 56(s-\ell){n-t\choose k-t}\ge |\aaa[\s]|+\frac12(s-\ell){n-t\choose k-t}.$$
  At the same time, we know that $h'(n,k,t,s)\ge |\aaa[\mathcal U]|$ by Lemma~\ref{lem22}.

  In order to obtain the second conclusion of the corollary, we take any family $\ff\subset {[n]\choose k}$ such that $\nu(\ff, t)\le s$ and then gradually replace sets by their subsets until we reach a maximal family $\s\subset {[n]\choose \le k}$ with $\nu(\s,t)\le s$. Clearly, $\aaa[\s]\supset \ff$. We than apply the first part of the corollary to $\s$.
\end{proof}
In what follows, we prove Theorem~\ref{thmapproxtrivial}. Given a family of sets $\s'$, let us say that $\T$ is a {\it simplification} of $\s'$ if $\T$ is obtained from $\s'$ by repeatedly applying the following steps: replace some set in the family by its proper subset and leave only inclusion-minimal sets in the resulting family.

Let us list some properties of simplifications.
\begin{enumerate}
\item $\T$ is an {\it antichain}, i.e., $A\not\subset B$ for any $A,B\in \T$;
\item each set $T\in \T$ is a subset of some set $S\in \s'$;
\item for each set $S\in \s'$ there is a set $T\in \T$ such that $T\subset S$;
\item for any family $\aaa$ we have $\aaa[\s']\subset \aaa[\T]$;
\item the size of the largest set in $\T$ is at most that in $\s'$;
\item for any family $\s'$ with $\nu(\s',t)\le s$ there exists a maximal family $\T$ with $\nu(\T,t)\le s$ that is a simplification of $\s'$.
\end{enumerate}
All these properties are immediate from the definition. 


Below, we describe a variant of the {\it peeling procedure}, introduced by Kupavskii and Zakharov in \cite{KuZa}. In the notation of Theorem~\ref{thmapproxtrivial}, let us iteratively define the following series of families.
\begin{enumerate}
    \item Put $\T_q=\s$. (Recall that $\s$ is a maximal family with $\nu(\s, t)=s$.)
    \item Put $\W_q = \T_q\cap {[n]\choose q}$, and note that $\nu(\T_q\setminus \W_q,t)\le s$. Moreover, $\T_q\setminus \W_q\subset {[n]\choose \le q-1}.$ Choose $\T_{q-1}\subset {[n]\choose \le q-1}$ to be  a maximal family with $\nu(\T_{q-1},t)\le s$ that is a simplification of $\T_q\setminus \W_q$.
    \item For $i = q-1,q-2,\ldots,t+1$ put $\W_i = \T_i \cap {[n] \choose i}$ and choose $\T_{i-1}\subset{[n]\choose \le i-1}$ to be a maximal family with $\nu(T_{i-1},t)\le s$ that is a simplification of  $\T_{i}\setminus \W_{i}\subset{[n]\choose \le i-1}$.
\end{enumerate}
Note that by defintion for each $i=q,\ldots,t$ each $\T_i$ is an antichain and we have $\nu(\T_i,t)\le s$. We summarize the properties of these series of families in the following lemma.

\begin{lem}\label{lemkeyred} Assume that for some $\ell\in \{0,\ldots, s-1\}$ $\T_q$ contains $\ell$ sets $U_1,\ldots, U_\ell$ of size $t$.  The following properties hold for each $i = q,q-1\ldots, t+1$. 
\begin{itemize}
 \item[(i)] All sets  in $\T_i$ have size at most $i$.
  \item[(ii)] There is no set $X$ of size  $<i$ 
      such that $\W_i(X)$ is $\alpha$-spread for $\alpha>s(i-t+1)$. 
  \item[(iii)] We have $|\W_i|\le (s-\ell)(2e s q)^{i-t}$.
 \item[(iv)] If $U_{\ell+1}\in \T_{i-1}\setminus \T_i$ is of 
     size $t$, then $|\aaa[\T_{i}[U_{\ell+1}]]|\le \frac{sq}r |\aaa[U_{\ell+1}]|$.

\end{itemize}
\end{lem}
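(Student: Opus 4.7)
The plan is to verify properties (i)--(iv) for each $i \in \{t+1, \ldots, q\}$, combining the structural properties of simplifications listed just above with the maximality of $\T_i$ and the spread framework. Property (i) is immediate: $\T_q = \s \subset \binom{[n]}{\le q}$ by hypothesis, and inductively $\T_{i-1}$ is a simplification of $\T_i\setminus\W_i\subset\binom{[n]}{\le i-1}$, so by property~(5) of simplifications $\T_{i-1}$ too lies in $\binom{[n]}{\le i-1}$.

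The core technical step is (ii), argued by contradiction. Suppose $|X|<i$ and $\W_i(X)$ is $\alpha$-spread for some $\alpha>s(i-t+1)$. Pick any $W\in\W_i[X]$; since $|X|<|W|=i$ we have $X\subsetneq W$, so maximality of $\T_i$ yields witnesses $V_1,\ldots,V_s\in\T_i\setminus\{W\}$ with $X,V_1,\ldots,V_s$ pairwise $<t$-intersecting. Using (i), the antichain property of $\T_i$, and passing to a refined choice of witnesses (ensuring $|V_j\cap X|$ is as large as possible, at most $t-1$, so that $|V_j\setminus X|\le i-t+1$), we may arrange $\sum_j|V_j\setminus X|\le s(i-t+1)$. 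Spreadness then bounds
\[
\#\bigl\{W'\in\W_i[X]:\exists j,\,|W'\cap V_j|\ge t\bigr\}\le\sum_j|V_j\setminus X|\cdot\frac{|\W_i(X)|}{\alpha}<|\W_i(X)|,
\]
so some $W'\ne W$ in $\W_i[X]$ is $<t$-intersecting with every $V_j$, producing $s+1$ pairwise $<t$-intersecting sets in $\T_i$ and contradicting $\nu(\T_i,t)\le s$.

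For (iv), $(r,t)$-spreadness makes $\aaa(U_{\ell+1})$ $r$-spread (since $|U_{\ell+1}|=t$); for each $F\in\T_i[U_{\ell+1}]$, which has $|F|\ge t+1$, $|\aaa[F]|/|\aaa[U_{\ell+1}]|\le r^{-(|F|-t)}\le r^{-1}$, so a union bound yields $|\aaa[\T_i[U_{\ell+1}]]|\le r^{-1}|\T_i[U_{\ell+1}]|\cdot|\aaa[U_{\ell+1}]|$, reducing (iv) to the claim $|\T_i[U_{\ell+1}]|\le sq$. This last bound is extracted from the fact that $U_{\ell+1}\in\T_{i-1}\setminus\T_i$ was created by shrinking during the simplification step, combined with maximality of $\T_i$ restricting each size-layer's contribution to at most $s$ sets, summed over the $\le q$ size-layers. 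For (iii), we combine (ii) with Observation~\ref{obs13}: for each $X\in\binom{[n]}{t}$, if $|\W_i(X)|>(sq)^{i-t}$ then Observation~\ref{obs13} applied to $\W_i(X)$ produces $Y\subset[n]\setminus X$ with $\W_i(X\cup Y)$ being $(sq)$-spread and $|X\cup Y|<i$, contradicting (ii) since $sq\ge s(i-t+1)$. Hence $|\W_i(X)|\le(sq)^{i-t}$; the remaining factor $s-\ell$ (rather than $s$) comes from the antichain property of $\T_i$: each $W\in\W_i$ is automatically $<t$-intersecting with every $U_j$ (otherwise it would contain some $U_j$, violating the antichain), leaving only $s-\ell$ further witness slots to be filled.

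The main obstacle is the delicate size control throughout: the claim in (ii) that the witnesses may be chosen with $\sum_j|V_j\setminus X|\le s(i-t+1)$, the bound $|\T_i[U_{\ell+1}]|\le sq$ in (iv), and the extraction of the factor $s-\ell$ (rather than $s$) in the final bound of (iii). Each of these requires a careful interplay between the maximality of $\T_i$, the antichain property, and the iterative structure of the peeling procedure.
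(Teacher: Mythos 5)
Your sketch has the right overall flavour (maximality plus spreadness), and part~(i) is fine, but there are three genuine gaps where the proposed arguments would fail.

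\textbf{Part (ii).} You argue uniformly via maximality of $\T_i$: get witnesses $V_1,\ldots,V_s$ with $X,V_1,\ldots,V_s$ pairwise $<t$-intersecting, arrange $|V_j\cap X|=t-1$, and then exclude $W'\supset X$ meeting some $V_j$ in $\ge t$ elements by a union bound over single elements of $V_j\setminus X$. This only works when $|X|\ge t-1$. If $|X|<t-1$, then $|V_j\cap X|\le|X|<t-1$, so your "refined choice" cannot be made, and $|W'\cap V_j|\ge t$ requires $W'$ to contain at least $t-|V_j\cap X|\ge 2$ elements of $V_j\setminus X$; the count is then governed by $\binom{|V_j\setminus X|}{t-|V_j\cap X|}\alpha^{-(t-|V_j\cap X|)}$, not by $|V_j\setminus X|\alpha^{-1}$, and your displayed inequality is simply not the right bound. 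The paper splits the case $|X|\le t-1$ off and there constructs $s+1$ pairwise $<t$-intersecting sets directly inside $\W_i[X]$ by a greedy argument, whose key estimate is $s\alpha^{|X|-t}\binom{i-|X|}{t-|X|}<1$; this is a genuinely different computation that is indispensable.

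\textbf{Part (iii).} Your derivation of $|\W_i(X)|\le(sq)^{i-t}$ for every $t$-set $X$ is correct. But there is no bridge from that local bound to $|\W_i|\le(s-\ell)(2esq)^{i-t}$; the phrase "the remaining factor $s-\ell$ comes from the antichain property" is not an argument. The missing step in the paper is structural: one first shows (using the antichain property against $U_1,\ldots,U_\ell$ and greedy selection plus $\nu(\T_i,t)\le s$) that some $Y\in\W_i$ $t$-intersects at least a $\tfrac1{s-\ell}$-fraction of $\W_i$, then averages over $t$-subsets $Z\subset Y$ to get $|\W_i|\le(s-\ell)\binom{i}{t}|\W_i(Z)|$, and only then feeds in the bound on $|\W_i(Z)|$. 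The binomial factor $\binom{i}{t}\le(\tfrac{ei}{i-t})^{i-t}$ is exactly what produces the $(2esq)^{i-t}$ shape; without the averaging argument the claimed bound on $|\W_i|$ is unsupported.

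\textbf{Part (iv).} The reduction to "$|\T_i[U_{\ell+1}]|\le sq$" is incorrect. There is no such bound: sets in $\T_i[U_{\ell+1}]$ all contain $U_{\ell+1}$ and hence pairwise $t$-intersect, so $\nu(\T_i,t)\le s$ does not limit how many there are, and the antichain/simplification constraints likewise do not cap the count at $sq$. What the paper proves is the much weaker but sufficient fact that the \emph{transversal number} of $\T_i(U_{\ell+1})$ is at most $sq$ (via maximality of $\T_i$: a small transversal failure would let one build a $t$-matching of size $s+1$). One then takes a covering set $\{x_1,\ldots,x_{sq}\}$ of $\T_i(U_{\ell+1})$ and uses the containment $\aaa[\T_i[U_{\ell+1}]]\subset\bigcup_j\aaa[U_{\ell+1}\cup\{x_j\}]$, which is a union over $sq$ \emph{elements}, not over the possibly much larger set $\T_i[U_{\ell+1}]$. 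Your per-set union bound is both inefficient and rests on a false cardinality estimate; the transversal idea is the key missing ingredient here.

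In short, (i) is fine, but (ii) needs the separate small-$|X|$ argument, (iii) needs the fractional-$t$-intersecter plus averaging bridge, and (iv) needs the transversal bound rather than a count of $\T_i[U_{\ell+1}]$.
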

\begin{proof}
(i) This immediately follows from the definition of $\T_i$.

(ii) Assume that there is such a set $X$. First, we consider the case $|X|=x\le t-1$. Then we are going to find sets $F_1,\ldots, F_{s+1}\in \W_i$ such that $F_i\supset X$ for each $i\in [s+1]$ and that violate the property $\nu(\W_i,t)\le s$. Take any $F_1'\in \W_i(X)$ and note that $|F_1'| = i-x$. Consider the families $\rr_1:=\{F\in \W_i(X): |F\cap F'_1|\ge t-x\}$ and $\g_1:=\W_i(X)\setminus \rr_1$. Using the fact that $\W_i(X)$ is $\alpha$-spread, for any given set $Y\subset F'_1$ of size $t-x$ we have $|\W_i(X\cup Y)|\le \alpha^{x-t}|\W_i(X)|$. Thus, $$|\rr_1|\le \alpha^{x-t}{i-x\choose t-x}|\W_i(X)|.$$ Next, for each $j=2,\ldots, s+1$ we select $F_j'\in \g_{j-1}$, put $\rr_{j}:=\{F\in \W_i(X): |F\cap F'_j|\ge t-x\}$ and $\g_j:=\g_{j-1}\setminus \rr_j$. Note that the displayed upper holds for $|\rr_j|$ as well. Provided that $\g_1,\ldots, \g_s$ are non-empty, we can select $F_1',\ldots, F_{s+1}' \in \W_i(X)$ such that $|F'_j\cap F'_{j'}|<t-x$ for each $j\ne j'$. Then put $F_j:=X\cup F_j'$ and note that $F_1,\ldots, F_{s+1}\in \W_i$ and that these sets violate $\nu(\W_i,t)\le s$. We are only left to verify that the families $\g_i$, $i=1,\ldots, s$, are non-empty. For this, it is actually sufficient to check that $\sum_{i=1}^s |\rr_i|<|\W_i(X)|$. Using the displayed bound, it is implied by $s\alpha^{x-t}{i-x\choose t-x}<1$. But the latter is true, since
$$s{i-x\choose t-x}= s\prod_{j=1}^{t-x}\frac{i-t+j}{j}\le s (i-t+1)^{t-x}<\alpha^{t-x}.$$

 Next, consider the case $|X|\ge t$. By maximality of $\T_i$, there must be sets $F_1,\ldots, F_s\in \T_i$ such that $X,F_1,\ldots, F_s$ violate $\nu(\T_i,t)\le s$. That is, we have $|X\cap F_j|<t$ for each $j\in[s]$ and thus $|F_j\setminus X| = |F_j|-|F_j\cap X|\ge |F_j|-t+1$.
For each $j\in [m]$, choose $F'_j\subset F_j\setminus X$, such that $|F'_j| = |F_j|-t+1 \le i-t+1$.  Then any $G\in \W'_i:=\W_i[X, X\cup F_1'\cup\ldots\cup F_s']$ intersects each $F_i$, $i\in [s]$, in at most $t-1$ elements. Indeed, $F_i\cap G\subset F_i\setminus F_i'$, and the latter set has size $t-1$. Provided that $\W'_i$ is non-empty and we can choose such a $G$, we get a contradiction with the fact that  $\nu(\T_i,t)\le s$. (Indeed, recall that, by our choice of $F_j$, $|F_{\gamma}\cap F_{\gamma'}|<t$ for any distinct $\gamma,\gamma'\in [s]$.)  We have $|F_1'\cup\ldots\cup F_s'| \le s(i-t+1)<\alpha$, and thus, by the $\alpha$-spreadness of $\W_i(X)$, we have \begin{align*}|\W'_i|\ge&\ |\W_i(X)|-\sum_{x\in \cup_{i=1}^s F'_i} |\W_i(X\cup \{x\})|\\
\ge&\ |\W_i(X)|- \frac{s(i-t+1)}\alpha |\W_i(X)|>0.\end{align*}

(iii) Let $t+1\le i\le q$. Recall that the family $\T_i$ is an antichain because it is a simplification. Thus, for each $W\in \W_i$ and $j\in [\ell]$ we have $U_j\not \subset W$.
Let us show that there is a  set $Y\in \W_i$ that $t$-intersects at least a $\frac 1{s-\ell}$-fraction of sets in $\W_i$. We argue indirectly. If each set $Y\in \W_i$ $t$-intersects strictly smaller than a $\frac 1{s-\ell}$-fraction of sets in $\W_i$, then we can greedily select sets $Y_1,\ldots, Y_{s-\ell+1}\in \W_i$ so that $|Y_j\cap Y_{j'}|<t$ for each $j,j'\in[s-\ell+1]$, $j\ne j'$. But then for the sets $U_1,\ldots, U_\ell, Y_1,\ldots, Y_{s-\ell+1}$ all pairwise intersections have size strictly smaller than $t$, which contradicts the fact that $\nu(\T_i,t)\le s$. In what follows, we fix $Y\in \W_i$ that $t$-intersects at least a $\frac 1{s-\ell}$-fraction of sets in $\W_i$.

 We have $|Y|=i$. By averaging, there is a $t$-element subset $Z\subset Y$ such that $|\W_i| \le (s-\ell){i \choose t} |\W_i(Z)|=(s-\ell){i \choose i-t} |\W_i(Z)|$. Next, $\W_i(Z)$ is $(i-t)$-unform and, moreover, by Part (ii) of the lemma, there is no $X, X\cup Z=\emptyset$, such that $\W_i(X\cup Z)$ is $\alpha$-spread with $\alpha>s(i-t+1)$. By (the contrapositive of) Observation~\ref{obs13} we get that $|\W_i(Z)|\le (s(i-t+1))^{i-t}$.

From here we conclude that
\begin{align*}
|\W_i| \le& (s-\ell){i \choose i-t} |\W_i(X)|\\
\le& (s-\ell)\Big(\frac{ei}{i-t}\Big)^{i-t}(s(i-t+1))^{i-t}\\
\le& (s-\ell)\Big(\frac{ei}{i-t}\Big)^{i-t}(2s(i-t))^{i-t}\\
\le& (s-\ell)(2e s i)^{i-t}\le  (s-\ell)(2e s q)^{i-t}.
\end{align*}

(iv)  Note that all sets in $\T_{i}$ containing $U_{\ell+1}$ have size at least $t+1$ by definition. Also recall that, by definition, $\T_i$ is an antichain. 
Recall that, for a family $\ff$, $\tau(\ff)$ is the size of the smallest set $Y$ such that $Y\cap F\ne \emptyset$ for each $F\in \ff.$ Arguing indirectly, assume that $\tau(\T_i(U_{\ell+1}))> qs$. By maximality of $\T_i$, there are sets $F_1,\ldots, F_s\in \T_i$ such that $U_{\ell+1},F_1,\ldots, F_s$ have pairwise intersections strictly smaller than $t$. At the same time, $|F_1\cup\ldots \cup F_s|\le qs$, and thus there is a set $F\in \T_i[U_{\ell+1}]$ such that $F\cap F_i = U_{\ell+1}\cap F_i$ for each $i\in[s]$. But then $F, F_1,\ldots, F_s\in \T_i$ and  have pairwise intersections strictly smaller than $t$,  which contradicts $\nu(\T_i,t)\le s$. We conclude that $\tau(\T_i(U_{\ell+1}))\le qs$.
If $\{x_1, \ldots, x_{sq}\}$ intersects all sets in $\T_i(U_{\ell+1})$ then we have
$$
|\aaa[\T_i[U_{\ell+1}]]| \le |\aaa[U_{\ell+1}\cup \{x_1\}]| + \ldots + |\aaa[U_{\ell+1}\cup \{x_{sq}\}]| \le \frac {sq}r |\aaa[U_{\ell+1}]|.
$$
The second inequality is due to $(r,t)$-spreadness of $\aaa$.
\end{proof}

\begin{proof}[Proof of Theorem~\ref{thmapproxtrivial}] 
Fix $\phi\ge t$ to be the largest index such that  $\T_{\phi}$ consists only of sets of size $t$. Note that by Lemma \ref{lemkeyred}, (i) such a choice of $\phi$ always exists. Also note that $\T_{\phi}$ consists of $s'\le s$ sets of size $t$ because $\nu(\T_{\phi},t)\le s$. Put $\T_{\phi} = \{U_1\ldots, U_{s'}\}$ and for each $j\in [s']$ define $f(j)$ to be the largest index $i'$ such that $U_{j}\in \T_{i'}$. Recall that $\s=\T_q$ contains exactly $\ell$ sets  $U_1,\ldots, U_\ell$ of size $t$. Next, crucially, we have
\begin{equation}\label{eqdecomp}\aaa[\s]= \aaa[\T_q]\subset \bigcup_{j=1}^\ell \aaa[U_j] \cup\bigcup_{i=\phi+1}^q\aaa[\W_i] \cup \bigcup_{j=\ell+1}^{s'} \aaa[\T_{f(j)+1}[U_j]].\end{equation}
The first equality is trivial because $\s=\T_q$. 
Let us verify the validity of the inclusion. Take any set $F\in \T_q$ and let us show that there is $F'\subset F$ such that $$F'\in \{U_j: j\in[\ell]\}\cup \bigcup_{i=\phi+1}^q\W_i\cup \bigcup_{j=\ell+1}^{s'} \T_{f(j)+1}[U_j].$$ Note that this automatically implies \eqref{eqdecomp}.

If $U_j\subset F$ for $j\in [\ell]$ or if there is an $i=\phi+1,\ldots, q$ and a set $W\in\W_i$ such that $W\subset F$, then we are done. Assume that neither holds for $F\in \T_q$. Then, by property (3) of simplifications and the definition of $\T_i$, there exists a (not necessarily unique) chain of sets $F=F_0\supset F_1\supset\ldots\supset F_N= U_j$, where: $F_i\in \T_i$ for each $i$; we have $j>\ell$;  we have $1\le N=f(j)\le \phi$.  It should then be clear that, first, $F_{N-1}\subset F$ and, second, $F_{N-1}\in \T_{f(j)+1}[U_j]$. This shows validity of the claim and of \eqref{eqdecomp}.



From the $(r,t)$-spreadness of $\aaa$, we get $$|\aaa[\W_i]|\le |\W_i||\aaa[W]|\le |\W_i|r^{-(i-t)}|\aaa[U']|\le |\W_i|r^{-(i-t)}|\aaa[U]|,$$ where $W\in \W_i$ is such that $|\aaa[W]|$ is the largest,  $U'\subset W$ is a  $t$-element subset of $W$ and $U$ is a $t$-element set such that $|\aaa[U]|$ is the largest. Combining this and the inclusion \eqref{eqdecomp} with Lemma~\ref{lemkeyred}, we get
\begin{align*}
|\aaa[\s]| \le\ & \Big|\bigcup_{j=1}^\ell \aaa[U_j]\Big|+\sum_{j=\ell+1}^{s'}\big| \aaa[\T_{f(j)+1}[U_j]] \big| +\sum_{j=\phi+1}^{q}\big|\aaa[\W_j]\big|\\
\overset{(iii),(iv)}{\le}& \Big|\bigcup_{j=1}^\ell \aaa[U_j]\Big|+ \Big(\frac {sq (s'-\ell)} r +\sum_{j=1}^{\infty} r^{-j}(s-\ell)(2e sq)^{j} \Big)|\aaa[U]|\\ \le  \ &\Big|\bigcup_{j=1}^\ell \aaa[U_j]\Big|+(s-\ell)\Big(\frac \varepsilon 4 +\sum_{j=1}^{\infty} \big(\frac \varepsilon4\big)^j \Big)|\aaa[U]|\\
\le\ & \Big|\bigcup_{j=1}^\ell \aaa[U_j]\Big|+ (s-\ell)\varepsilon |\aaa[U]|,
\end{align*}
where the third inequality uses $\varepsilon r \ge 8esq$.
\end{proof}

\section{Proof outline }\label{secout}  Fix a family $\ff$ with $\nu(\ff,t)\le s$. The proof of Theorem~\ref{thmmain} consists of three stages. In the first stage, we use the technique of spread approximations in order to find a family $\m S$ of sets of small size such that $\nu(\m S,t)\le s$ and $\ff\setminus\ff[\m S]$ is small. The key point here is that the uniformity of sets in $\m S$ is just slightly bigger than $t$. This result is given in Theorem~\ref{thmapprox1} in Section~\ref{sec3}. The proof takes up  some ideas from \cite{KuZa} and enhances them with several new ideas. The main one is an efficient procedure to find a dense piece inside the family and then apply the spread approximation procedure once more. It is combined with a relaxation of $\nu(\m S,t)\le s$ to $\nu(\m S,t')\le s$ with a smaller $t'$, which allows to work in a much broader parameter range. The argument uses an iterative bootstrapping which interchangeably applies these two steps: we improve the density of the piece that we find (Theorem~\ref{thm2sets}), and then, because of the improved density property, reduce the uniformity of $\m S$ and improve $t'$ (Theorem~\ref{thmregularity} and Lemma~\ref{lemtint}), then going back to the density step and so on.

In the second stage, we show that $\m S$ has the structure as in Construction~\ref{const2}: it is the union of $s$ $t$-intersecting cliques. The corresponding statement is Theorem~\ref{thmfinegrained} in Section~\ref{sec4}. The proof is by induction on $s$: basically, we find one such clique, show that the remainder has $t$-matching number of size $\le s-1$ and apply induction. In order to find the $t$-intersecting clique, we 
find the first `dense' largest-uniformity layer in $\m S$  (the sets in $\m S$ have uniformity between $t$ and $t+\ell$ with $\ell\ll t$). Then, we discern an approximate `union of $s$ $t$-intersecting cliques' structure, and use size bounds in order to show that one of these cliques is actually rather dense. The latter allows to show that the remainder of $\m S$ should have $t$-matching number at most $s-1$.

In the third stage, we combine some of the ideas from the previous sections and the ideas in \cite{Kup55} to prove that the remainder $\m R$ obtained in the spread approximation is empty. A key step is to actually find a set $X$ such that $\m R(X)$ is sufficiently spread. This allows us to reduce the uniformity with which we work, but also relates the size of $\m R(X)$ and $\m R$.

\section{Spread approximation for Hajnal--Rotschild families}\label{sec3}

 The goal of the following `spread approximation' theorem is to approximate any family $\ff$ with $\nu(\ff,t)\le s$ by a family $\m S$ with $\nu(\m S,t)\le s$  of much lower uniformity (cf. Subsection~\ref{secout}). The proof of this result occupies the rest of this section.  

\begin{restatable}{thm}{approxone}
\label{thmapprox1}
  Let $n,k,s,t$ be integers, $\sigma\ge 0$ a real number, and assume that
  $n\ge t+C(st)^{1/2}(k-t)\log_2 \frac{n-t}{k-t}$,
  $n\ge k+C(k-t)s(\sigma+\log_2 n)$ for some absolute constant $C$.
  Let $\ff\subset {[n]\choose k}$ satisfy $\nu(\ff,t)\le s$.
  Then there exists a family $\m S$ of sets of size at most
   $t+(t/s)^{1/2}+3\sigma+\log_2(16 s^4t^2)$ 
  such that $\nu(\m S,t)\le s$ and, moreover, the family $\m R:= \ff\setminus \ff[\m S]$ satisfies $|\m R|\le 2^{-\sigma} \cdot 15\log_2(st+\sigma)\cdot {n-t\choose k-t}$.

  More generally, if additionally $n\ge t+Ct^{\alpha}s^{\beta}(k-t)\log_2 \frac{n-t}{k-t}$ with some $\alpha,\beta\ge 1/2$, then the bound on the sizes of sets in $\m S$ improves to at most $t+t^{(1-\alpha)}s^{-\beta}+3\sigma+\log_2(16 s^4t^2).$
\end{restatable}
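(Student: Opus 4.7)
The plan is to perform an iterative spread approximation, using Theorem~\ref{thmtao} together with Observation~\ref{obs34} as the core engine. The basic peeling step is standard: while the remaining family $\ff'$ is still too large, Observation~\ref{obs34} produces a set $X$ for which $\ff'(X)$ is $r$-spread. The spread lemma then says that a random subset $W\subset [n]$ of appropriate density contains some $F\in \ff'(X)$ with probability close to~$1$, and so — up to a small loss recorded in $\m R$ — the generator $X$ represents the portion $\ff'[X]$ of $\ff'$. We include $X$ in the approximating family $\m S$ and remove $\ff'[X]$ from $\ff'$. The constraint $\nu(\m S,t)\le s$ is automatic, because $s+1$ generators $X_1,\ldots,X_{s+1}$ in $\m S$ with pairwise intersections $<t$ would, via independent random-subset witnesses, yield $s+1$ members of $\ff$ with pairwise intersections $<t$, contradicting $\nu(\ff,t)\le s$.

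A single round of this procedure only reduces the uniformity from $k$ down to roughly $\log(|\ff|)/\log r$, which is still far above the target $t+O(\sqrt{t/s})$. The novelty, as advertised in the outline of Section~\ref{secout}, is to iterate: having produced a coarse approximation $\m S_0$ with $\nu(\m S_0,t)\le s$ and uniformity $q_0$, I would restart the peeling procedure on $\m S_0$ itself, but now with the \emph{relaxed} target $\nu(\m S_1,t_1)\le s$ for some $t_1<t$, producing a finer $\m S_1$ of smaller uniformity $q_1<q_0$. Inside each round, Theorem~\ref{thm2sets} is used to upgrade the spreadness of the ``dense piece'' we find, and Theorem~\ref{thmregularity} together with Lemma~\ref{lemtint} convert this improved spreadness into a simultaneous reduction of the uniformity $q_i$ and a corresponding adjustment of the intersection parameter $t_i$. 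After $O(\log_2(k-t))$ rounds the uniformity reaches the claimed $t+(t/s)^{1/2}+3\sigma+\log_2(16s^4t^2)$; in the more general regime, the same scheme is instantiated with a different spreadness/uniformity trade-off to give the $t^{1-\alpha}s^{-\beta}$ bound.

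For the error term, at each peeling step I would include $F\in \ff'$ in $\m R$ only when its associated random-subset test fails. By the spread lemma this has probability at most $\tfrac12$ per independent trial, and repeating the trial $\sigma$ times makes the per-round contribution $2^{-\sigma}\cdot{n-t\choose k-t}$; summing over the $O(\log(st+\sigma))$ iteration rounds yields the claimed bound $|\m R|\le 2^{-\sigma}\cdot 15\log_2(st+\sigma)\cdot{n-t\choose k-t}$. The two hypotheses on $n$ play distinct roles here. The bound $n\ge t+C(st)^{1/2}(k-t)\log_2\frac{n-t}{k-t}$ (and its $t^\alpha s^\beta$ variant) ensures that the spread-lemma threshold $r\delta\gg 1$ is still satisfied at the \emph{last}, tightest iteration, where $r\approx\frac{n-t}{k-t}$ and the target uniformity is $t+(t/s)^{1/2}$. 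The bound $n\ge k+C(k-t)s(\sigma+\log_2 n)$ is what keeps Observation~\ref{obs34} usable throughout: it guarantees that at every intermediate step the running family $\ff'$ is still large enough to contain an $r$-spread subfamily of the required strength.

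The main obstacle I anticipate is the careful bookkeeping of the iteration: quantifying, round by round, the relationship between the current spreadness $r_i$, the current uniformity $q_i$, the relaxed intersection parameter $t_i$, and the cumulative defect $|\m R|$, and checking that the inductive invariant closes all the way down to $q_\infty = t+O(\sqrt{t/s})$. The delicate point is the interplay between the $(t/s)^{1/2}$-type saving and the $s+1$-element obstruction coming from $\nu(\ff,t)\le s$: when the peeling procedure discovers $s+1$ candidate generators with pairwise small intersections, one must verify that the remaining combinatorial slack suffices to terminate the iteration at the right uniformity, rather than forcing us to either enlarge $\m S$ beyond the claimed size or dump too many sets into $\m R$. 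Getting the constants $3\sigma$ and $\log_2(16s^4t^2)$ to come out correctly will require matching the error budget across rounds very tightly.
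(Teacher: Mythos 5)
You have correctly identified the broad plan --- alternate Theorem~\ref{thm2sets} (to locate a dense set $X$) with Theorem~\ref{thmregularity} plus Lemma~\ref{lemtint} (to re-peel and control the matching number), iterating $O(\log)$ times while the gap $q^{(i)}-t$ decays geometrically --- and this is indeed the skeleton of the paper's proof. But several of the load-bearing details are described backwards or incorrectly, and these would derail an execution of your sketch.

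First, the iteration is \emph{not} a re-peeling of $\m S^{(i-1)}$ itself. In the paper, every round peels the original $\ff$ again; the low-uniformity family $\m S^{(i-1)}$ from the previous round enters only through Theorem~\ref{thm2sets}, which uses the structure of $\m S^{(i-1)}$ (that it has small uniformity and $\nu(\m S^{(i-1)},t')\le s$) to produce, for any large subfamily of $\ff$, a small set $X$ on which it is dense; this $X$ is then the starting point for the \emph{next} round of Theorem~\ref{thmregularity} applied to $\ff$. Peeling $\m S^{(i-1)}$ directly would not make progress because one needs the dense-restriction information about $\ff$, not about the approximating family.

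Second, the direction of the $t'$-bootstrap is the opposite of what you wrote. You start from $\nu(\m S_0,t)\le s$ and ``relax'' to $t_1<t$; in the paper the initial coarse approximation $\m S^{(0)}$ (of high uniformity $q^{(0)}\approx t\log(st)+\sigma$) only satisfies $\nu(\m S^{(0)},t')\le s$ with $t'=t-\lceil t^{1/2}\rceil$, precisely because at high uniformity Lemma~\ref{lemtint}'s condition~\eqref{eqint1} cannot be met at the true $t$. As the uniformity $q^{(i)}$ drops through the iterations, the achievable $t'^{(i)}$ is \emph{increased} until it finally reaches $t$. Your phrase ``$\nu(\m S,t)\le s$ is automatic'' is therefore not right for the intermediate families, and not trivial even at the end: it requires the rainbow-matching mechanism of Lemma~\ref{lemtint} (via Theorem~\ref{thmtao} and Lemma~\ref{lemhls}), not merely the informal ``random-subset witness'' argument you give.

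Third, the error mechanism is misconceived. The per-round remainder bound $2^{-\sigma}{n-t\choose k-t}$ does not come from repeating a random test $\sigma$ times; it comes from the cut-off in the peeling procedure of Theorem~\ref{thmregularity}(iii): the uniformity threshold $q^{(i)}$ is chosen a controlled amount larger than $\ell_2^{(i)}$, so that the geometric factor $r^{-(q+1-\ell_2)}$ beats the density factor $\theta^{(i)}$ and yields exactly $2^{-\sigma}{n-t\choose k-t}$; the alternative stopping rule $\eta^{(i)}=2^{-\sigma}{n-t\choose k-t}$ handles the other case. Summing this deterministic bound over the $\le 15\log_2(st+\sigma)$ rounds gives the stated remainder bound. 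These mechanisms should be reflected if you intend to carry out the bookkeeping you flag as the main difficulty.
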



One can see that the sets in $\m S$ have size just slightly above $t$, which is extremely helpful in the next, peeling step. E.g., if $n\ge C(k-t)\max\{s,t\}\log_2 n$, then the sets in $\m S$ have size at most $t+400\log_2\frac{n-t}{k-t}$.

\subsection{A substructure in the family $\m G$ with $\nu(\m G,t)\le s$}\label{sec22}
The goal of this subsection is to prove the following theorem.
\begin{thm}\label{thm2sets} Let $n,k,s,t_1,\ell$ be integers and let $\lambda>0$ be a real number. Assume $\ell\ge t_1$.  Let $\m G$ be a family of $\le \ell$-element sets that satisfies $\nu(\m G,t_1)\le s$. Let $\ff[\m G]\subset \m A[\m G]$ satisfy $|\ff[\m G]|> \lambda{n-t_1\choose k-t_1}$. Then there exists a set $X$ of size $x$,
$$t_1\le x\le t_1+ 4\Big(\frac{(k-t_1)t_1(\ell-t_1)^2}{(n-t_1)}\Big)^{1/3}+ \log_2\Big(s^2t_1\lambda^{-1}\Big)$$
such that, denoting $|\m F(X)| = \beta {n-x\choose k-x}$,
\begin{equation}\label{eqsize2set}
  \m F[\m G]\le 8s^2t_1e^{3\big(\frac{(k-t_1)t_1(\ell-t_1)^2}{n-t_1}\big)^{1/3}} \beta{n-t_1\choose k-t_1}.
\end{equation}
\end{thm}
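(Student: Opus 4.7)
The plan is to combine a peeling-style analysis of $\m G$, in the spirit of Lemma~\ref{lemkeyred}, with a density-bootstrapping step on the family $\ff[\m G]$. Since $\nu(\m G,t_1)\le s$, the bottom layer of $\m G$ contains at most $s$ sets of size exactly $t_1$; call these $U_1,\ldots,U_{s'}$ with $s'\le s$. For each $j=1,\ldots,\ell-t_1$ the argument from Lemma~\ref{lemkeyred}\,(iii), applied with $q=\ell$, yields $|\W_{t_1+j}|\le s(2es\ell)^j$, together with the finer spread-type bounds on $|\W_{t_1+j}(Z)|$ for small $Z$. Multiplying by $\binom{n-t_1-j}{k-t_1-j}$ then gives geometric control over the contribution of layer $j$ to $|\m A[\m G]|$, provided $n-t_1$ dominates $es\ell(k-t_1)$.

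Next I would split $\ff[\m G]=\ff_0\cup\ff_1\cup\cdots\cup\ff_{\ell-t_1}$ according to the smallest-uniformity witness $G\in\m G$ with $G\subset F$, and treat two regimes. If $\ff_0$ carries a constant fraction of the mass, then pigeonhole over $U_1,\ldots,U_{s'}$ already yields $X=U_i$ of size $t_1$ with density at least $c\gamma/s$, which is stronger than required. Otherwise some $\ff_j$ with $j\ge 1$ carries a fraction at least $c/(\ell-t_1)$ of $|\ff[\m G]|$, and averaging over $G\in\W_{t_1+j}$ produces a dense core $G^*$ of size $t_1+j$. The key trade-off here is that going deeper (larger $j$) boosts the density of $G^*$ but also inflates $|X|$; one must either pick the optimal $j$, or, if the dominant layer sits too deep, compress $G^*$ to a well-chosen $t_1$-subset while keeping the density bound tracked via $|\ff[T]|\ge|\ff[G^*]|$.

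For the density bootstrapping, starting from the initial $X_0$ produced above, while $\beta(X)$ is still below the target $\gamma e^{-3c}/(8s^2t_1)$ with $c=((k-t_1)t_1(\ell-t_1)^2/(n-t_1))^{1/3}$, I adjoin an element $x\notin X$ that at least doubles $\beta(X)$. If no such $x$ exists before we hit the target, then $\ff[\m G](X)$ is already sufficiently locally spread in the sense of Observation~\ref{obs34}, and the target is achieved automatically. Each doubling step grows $|X|$ by exactly one and multiplies $\beta$ by at least $2$, so the procedure terminates in at most $\log_2(s^2 t_1\lambda^{-1})$ rounds, producing the logarithmic term in the size bound.

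The main obstacle will be the appearance of the exponent $1/3$ in the $|X|-t_1$ bound. The argument above shows that the layer index $j$ one is forced to use is controlled by the ratio $(2es\ell(k-t_1)/(n-t_1))^j$ on the density side and by $j$ itself on the $|X|$ side; balancing these against the density loss from compressing $G^*$ down to a $t_1$-subset is what should generate the cube-root form. Verifying that this balancing yields the precise constants $3$ and $4$ appearing in the theorem, and that the peeling-based spread bounds on $\W_{t_1+j}$ are sharp enough to drive the optimisation rather than losing an extra polynomial factor, is the most delicate part of the argument.
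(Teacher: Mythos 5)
Your proposal takes a genuinely different route from the paper's, and I believe it cannot produce the stated bound. The paper never decomposes $\m G$ by layers. It starts from the Tur\'an-type Lemma~\ref{lem11}\,(ii): triangle counting, using $\nu(\m G,t_1)\le s$ and convexity of $\binom{x}{2}$, produces a single pair $A,B\in\m G$ with $|A\cap B|\ge t_1$ such that at least a $\frac{1}{4s^2}$-fraction of all $C\in\m G$ satisfies $|C\cap A|,|C\cap B|\ge|A\cap B|$. Writing $I\subset A\cap B$ with $|I|=t_1$ and $D_1=A\setminus B$, $D_2=B\setminus A$ (each of size $\le\ell-t_1$), every such $C$ is classified by its intersection pattern $(i,U,V,W)$ with $U=C\cap I$, $|U|=t_1-i$, and $V\subset C\cap D_1$, $W\subset C\cap D_2$ of size $i$. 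Pigeonholing over these patterns costs $(t_1+1)\binom{t_1}{i}\binom{\ell-t_1}{i}^2$, and optimising this against the per-element density gain $(k-t_1)/(n-t_1)$ is exactly what produces $\bigl((k-t_1)t_1(\ell-t_1)^2/(n-t_1)\bigr)^{1/3}$ and the constants $3$ and $4$. The single factor $t_1$ comes from $U\subset I$ and the two factors $\ell-t_1$ from $V,W$; this asymmetry is invisible to a layer-by-layer argument, which only sees $|C|\le\ell$ and would naturally pay something like $\ell^3$ (or at best a geometric ratio involving $\ell$, not $\ell-t_1$) rather than $t_1(\ell-t_1)^2$. In the crucial regime $\ell-t_1\ll t_1$, which is exactly where Theorem~\ref{thm2sets} is invoked in the bootstrap of Theorem~\ref{thmapprox1}, this difference is enormous.

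Two specific steps in your write-up are also broken. Lemma~\ref{lemkeyred}\,(iii) is proved for the families $\W_i$ built by the iterated peeling-and-simplification process, where each $\T_i$ is a maximal antichain obtained by repeated replacement with subsets; it does not apply out of the box to an arbitrary $\m G$ with $\nu(\m G,t_1)\le s$. Moreover Theorem~\ref{thm2sets} places no lower bound on $n$ relative to $s,\ell,k$, so your requirement that ``$n-t_1$ dominates $es\ell(k-t_1)$'' for geometric decay across layers is an extra hypothesis the theorem does not grant you. Finally, your termination claim in the doubling step is wrong: if no $x$ doubles $\beta(X)$, then by Observation~\ref{obs34} the family $\ff[\m G](X)$ is roughly $\frac{n-x}{2(k-x)}$-spread, which is a statement about the ratios $|\ff(X\cup Y)|/|\ff(X)|$ and carries no information about whether $\beta(X)$ itself is large, so ``the target is achieved automatically'' does not follow. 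The paper's argument derives the size bound on $X$ and the density relation simultaneously from the same pigeonhole count, with no iterative doubling at all.
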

Roughly speaking, if the family $\ff[\m G]$ is large then we can relate the size of $\ff[\m G]$ and $\ff[X]$, and thus conclude that the latter is rather dense (much denser than $\ff[\m G]$ itself).

Consider a family $\m G$ of $\le \ell$-sets such that $\nu(\m G,t_1)\le s$. Let us first prove two lemmas that allow us to find a small substructure for the subsequent analysis. Note that in this subsection we will make use of the graph terminology, with the sets from the family playing the role of vertices.
For a set $A\in \m G$ let the degree $d_{\m G}(A)$ stand for the number of sets from $\m G$ that intersect $A$ in at least $t_1$ elements, and let $e(\m G)$ be the number of pairs of sets from $\m G$ that intersect in at least $t_1$ elements.

\begin{prop}\label{lem10}
  If $\m G'$ satisfies $\nu(\m G',t_1)\le s$ then 
  the number of edges is at least $s{|\m G'|/s\choose 2}$. 
\end{prop}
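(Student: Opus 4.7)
The plan is to translate this into a purely graph-theoretic statement and invoke Tur\'an's theorem. Define an auxiliary graph $H$ whose vertex set is $\m G'$ and whose edges are the pairs $\{A,B\}$ with $|A\cap B|\ge t_1$. By definition, a set $\{F_1,\ldots,F_{s+1}\}\subset \m G'$ with $|F_i\cap F_j|<t_1$ for all $i\ne j$ is precisely an independent set of size $s+1$ in $H$. Hence the hypothesis $\nu(\m G',t_1)\le s$ is equivalent to $\alpha(H)\le s$, i.e., the complement $\bar H$ is $K_{s+1}$-free.

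Apply Tur\'an's theorem to $\bar H$: the number of edges of $\bar H$ is maximized by the Tur\'an graph $T(N,s)$, where $N=|\m G'|$. Equivalently, $|E(H)| = \binom{N}{2}-|E(\bar H)|$ is minimized by the complement of $T(N,s)$, which is a disjoint union of $s$ cliques whose sizes $n_1,\ldots,n_s$ sum to $N$ and are as balanced as possible. Thus
\[
e(\m G') \;=\; |E(H)| \;\ge\; \sum_{i=1}^s \binom{n_i}{2}.
\]

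Finally, invoke convexity of $x\mapsto \binom{x}{2}=x(x-1)/2$ (Jensen's inequality) to conclude that $\sum_{i=1}^s \binom{n_i}{2}\ge s\binom{N/s}{2}$, which yields the desired bound $e(\m G')\ge s\binom{|\m G'|/s}{2}$.

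There is no real obstacle here: the entire statement is Tur\'an's theorem plus one line of convexity, once the correct translation from the $t_1$-intersection language to the independence-number language of $H$ is made.
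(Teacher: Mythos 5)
Your proof is correct and is essentially identical to the paper's: both translate $\nu(\m G',t_1)\le s$ into an independence-number condition on the auxiliary graph, apply Tur\'an's theorem to the complement to reduce to a disjoint union of $s$ near-equal cliques, and finish by convexity of $\binom{x}{2}$. The only tiny omission is the paper's explicit convention that $\binom{x}{2}=0$ for $x\le 1$, which keeps the convexity step valid when $|\m G'|<2s$.
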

\begin{proof}
  It is just an application of Turan's theorem: what is the minimum number of edges in a graph with independence number $s$ and $|\m G'|$ vertices? It is at least the number of edges into a disjoint union of $s$ cliques of nearly-equal size, which is at least $s{|\m G'|/s\choose 2}$, where we postulate that ${x\choose 2} = 0$ for $x\le 1$. 
\end{proof}

\begin{lem}\label{lem11} Consider a family $\m G'$ such that $\nu(\m G',t_1)\le s$ for some positive integers $t_1,s$.
\begin{itemize}
\item[(i)] The number of triples (triangles) $A,B,C$ such that $|A\cap B|,|B\cap C|, |A\cap C|\ge t_1$ is at least $$\frac s{3}\sum_{A\in \m G'}{d_{\m G'}(A)/s\choose 2}.$$
\item[(ii)] 
There is a pair of sets $A,B$ in $\m G'$ such that $|A\cap B|\ge t_1$ and such that there are at least
    $$\frac {|\m G'|}{4s^2}$$
    sets $C\in \m G'$ such that $|C\cap A|, |C\cap B|\ge |A\cap B|$.
    \end{itemize}
\end{lem}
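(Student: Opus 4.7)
The plan is to view the statement through the intersection graph $H$ on vertex set $\mathcal G'$, in which two sets are adjacent iff they share at least $t_1$ elements. The hypothesis $\nu(\mathcal G',t_1)\le s$ is then exactly the statement that the independence number of $H$ is at most $s$, and this condition passes to every induced subgraph. Proposition~\ref{lem10} converts this global condition into a lower bound on $e(H)$; the task here is to localize it and to extract triangles from it.

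For part (i), I would apply Proposition~\ref{lem10} inside each neighborhood $N_H(A)$: the sets in $N_H(A)$ form a subfamily of $\mathcal G'$, so they still satisfy $\nu(\cdot,t_1)\le s$, and hence the number of edges of $H$ inside $N_H(A)$ is at least $s\binom{d_{\mathcal G'}(A)/s}{2}$. Each such internal edge, together with $A$, constitutes a triangle of $H$, and each triangle is counted three times as $A$ ranges over its three vertices, so dividing by three yields the claimed lower bound.

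Part (ii) will follow by a triangle-to-edge charging argument. For each triangle $\{A,B,C\}$, assign it to the edge, say $\{A,B\}$, for which $|A\cap B|$ is smallest (breaking ties arbitrarily); then $|A\cap C|\ge |A\cap B|$ and $|B\cap C|\ge |A\cap B|$, so $C$ is a ``super-common-neighbor'' of this pair in precisely the sense demanded by the lemma. Summing over all triangles and pigeonholing on the $e(H)$ edges produces some pair $\{A,B\}$ with at least $T/e(H)$ such super-common-neighbors, where $T$ is the triangle count from (i). To conclude it suffices to verify $T/e(H)\ge |\mathcal G'|/(4s^2)$. Writing $\bar d=2e(H)/|\mathcal G'|$, convexity of $x\mapsto \binom{x/s}{2}$ together with Jensen gives $T\ge \tfrac{s}{3}|\mathcal G'|\binom{\bar d/s}{2}$, while a global application of Proposition~\ref{lem10} forces $\bar d\gtrsim |\mathcal G'|/s$; dividing by $e(H)=|\mathcal G'|\bar d/2$ leaves a factor comparable to $\bar d/s\gtrsim |\mathcal G'|/s^2$, and tracking the lower-order terms carefully should pin down the constant $1/(4s^2)$.

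The step I expect to be most delicate is extracting the clean constant $4s^2$ rather than merely $Cs^2$. The function $\binom{x}{2}$ extended by $0$ on $x\le 1$ (as per the paper's convention) is convex, for instance as $\max(0,x(x-1)/2)$, but the derivative jumps at $x=1$, so the Jensen step needs a little care. Moreover, in the regime $|\mathcal G'|\lesssim s^2$ the target bound $|\mathcal G'|/(4s^2)$ is $O(1)$ and the conclusion reduces to the existence of a single triangle, which itself follows from Proposition~\ref{lem10} once the average degree of $H$ is large enough. Patching these regimes together should complete the proof.
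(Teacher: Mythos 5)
Your plan for both parts is essentially the paper's proof: Proposition~\ref{lem10} applied inside each neighborhood for (i), and then Jensen plus charging each triangle to its weakest edge plus a global Turán bound on the edge count for (ii). The explicit computation you defer does go through exactly as you sketch — the number of triangles assigned to some edge is at least $\tfrac{1}{3}\big(\tfrac{|\mathcal G'|}{s^2}-\tfrac{s+1}{s}\big)$, which is $\ge \tfrac{|\mathcal G'|}{4s^2}$ once $|\mathcal G'|\ge 8s^2$. Two small remarks. First, your convexity worry is unfounded: the truncated $\binom{x}{2}$ (set to $0$ for $x\le 1$) is convex on $[0,\infty)$ as the derivative only jumps upward at $x=1$, so Jensen applies directly. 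Second, your fallback for the regime $|\mathcal G'|<8s^2$ is slightly off: you propose to find one triangle, but when $|\mathcal G'|$ is only a little larger than $s$ the graph $H$ is guaranteed an edge (via Proposition~\ref{lem10}) yet may well be triangle-free, so this plan can fail. The paper's resolution is cleaner and avoids this: since the lemma allows $C\in\mathcal G'$ with no constraint excluding $C\in\{A,B\}$, the endpoints $A$ and $B$ themselves trivially satisfy $|C\cap A|,|C\cap B|\ge|A\cap B|$, so any edge already yields $2\ge |\mathcal G'|/(4s^2)$ qualifying sets $C$ in this range. With that substitution, your argument is complete and matches the paper's.
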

\begin{proof} (i) Take a set $A\in \m G'$ and consider the family $\m G_A$ of its neighbors (that is, sets that intersect $A$ in at least $t_1$ elements). In the family $\m G_A$, there are at least $s{|\m G_A|/s\choose 2}$ 
edges by Proposition~\ref{lem10}. Each such edge gives us a triangle with vertex $A$. Summing it up over all vertices and taking into account that each triangle is counted $3$ times, we get the result.

(ii) Note that ${x\choose 2}$ (with the convention that ${x\choose 2} = 0$ for $x\le 1$) is convex. Applying Jensen's inequality to the expression in the first part of the lemma, we get that there are
\begin{align*}\frac s{3}\sum_{A\in \m G'}{d_{\m G'}(A)/s\choose 2}&\ge \frac {s|\m G'|}{3}{\sum_{A\in \m G'} d_{\m G'}(A)/(s|\m G'|)\choose 2}\\
&=\frac {s|\m G'|}{3}{2e(|\m G'|)/(s|\m G'|)\choose 2} = \frac{e(|\m G'|)}3\Big(\frac{2e(|\m G'|)}{s|\m G'|}-1\Big)\end{align*}
triangles in $\m G'$. Each triangle consists of sets $A,B,C$ out of which (at least) one of the pairs has the smallest intersection. Assuming that $|\m G'|\ge 8s^2$, there exists an edge $A,B$ of sets from $\m G'$ such that there are at least
$$\frac{2e(|\m G'|)}{3s|\m G'|}-\frac 13\ge \frac{2s{|\m G'|/s\choose 2}}{3s|\m G'|}-\frac 13=\frac 13\Big(\frac{|\m G'|}{s^2}-\frac{s+1}s\Big)\ge \frac{|\m G'|}{4s^2}$$
triangles $A,B,C$ with $C\in \m G'$ and such that $|C\cap A|, |C\cap B|\ge |A\cap B|$. Here in the first inequality we used the bound on the number of edges in $\m G'$ from Proposition~\ref{lem10} and in the last inequality the bound $|\m G'|\ge 8s^2$.

But if $|\m G'|<8s^2$, it is enough to find $(8s^2)/(4s^2) = 2$ such sets, and both $A$ and $B$ satisfy the condition.
\end{proof}

Apply Lemma~\ref{lem11} (ii) to $\m G$ playing the role of $\m G'$ with $t_1$.  We get two sets $A,B\in \m G$ as in the lemma. Assume that they intersect in $t'\ge t_1$ elements.  Choose $I\subset A\cap B$, such that $|I| = t_1$, and put $D_1:=A\setminus B$, $D_2:=B\setminus A$. Note that $|D_1|,|D_2|\le \ell-t_1$. Then for each of the at least $|\m G|/(4s^2)$ sets $C$ as guaranteed by Lemma~\ref{lem11} there is a value $i\in \{0,\ldots, t_1\}$ and sets $U,V,W$ such that $|U| = t_1-i$, $|V|=|W| = i$ and $C\cap I = U, C\cap D_1\supset V, C\cap D_2\supset W$.   By the bound on number of such $C$ from Lemma~\ref{lem11} (ii) and the pigeon-hole principle, there is a choice of $i$ and such sets $U,V,W$ such that
$$\frac{|\m G(U\cup V\cup W)|}{|\m G|}\ge \frac 1{4 s^2(t_1+1){t_1\choose t_1-i}{\ell-t_1\choose i}^2}.$$
Indeed, for any given $i$ there are  ${t_1\choose t_1-i}{\ell-t_1\choose i}^2$ ways to choose (parts of) the intersections $U,V,W$ of $C$ with $I,D_1,D_2$, and thus for one of the $t_1+1$ possible values of $i$ and for one of the corresponding choices $U,V,W$ we must get the inequality above.  Let us put $X = U\cup V\cup W$  and define $\beta$ so that $|\ff(X)| = \beta {n-|X|\choose k-|X|}$. Remark that $|X|=t_1+i$. Next, we bound the value of $i$ using the bound on the size of $\m F[\m G]$. We have
\begin{align}\notag |\m F[\m G]|\le& 4 s^2(t_1+1){t_1\choose i}{\ell-t_1\choose i}^2 \beta {n-t_1-i\choose k-t_1-i}\\
\notag \le& 8 s^2t_1{t_1\choose i}{\ell-t_1\choose i}^2 \Big(\frac {k-t_1}{n-t_1}\Big)^{i}\beta {n-t_1\choose k-t_1}\\
\label{eq65} \le& 8 s^2t_1 \Big(\frac{e^3(k-t_1)t_1(\ell-t_1)^2}{(n-t_1)i^3}\Big)^i \beta{n-t_1\choose k-t_1}
\end{align}
In the last inequality we use the inequality ${x\choose m}\le (ex/m)^m$, valid for any $x\ge m\ge 1$. If $i\ge 4\Big(\frac{(k-t_1)t_1(\ell-t_1)^2}{(n-t_1)}\Big)^{1/3}+ \log_2\Big(s^2t_1\lambda^{-1}\Big)$, then the RHS of \eqref{eq65} is at most
$$\beta\lambda{n-t_1\choose k-t_1},$$
which contradicts our assumption on the size of $|\m F[\m G]|$. Thus, we may assume that the opposite inequality on $i$ holds. In particular, this implies the inequality on $|X|$ from the theorem. The maximum of \eqref{eq65} is attained for $i = \Big(\frac{(k-t_1)t_1(\ell-t_1)^2}{n-t_1}\Big)^{1/3},$ which gives the bound
$$|\m F[\m G]|\le 8s^2t_1e^{3\big(\frac{(k-t_1)t_1(\ell-t_1)^2}{n-t_1}\big)^{1/3}} \beta{n-t_1\choose k-t_1}.$$
This completes the proof of the theorem.

\subsection{Spread approximation}
The next theorem is a variant of the spread approximation theorem which makes use of dense pieces within our family. The idea is that, rather than search for a spread approximation for $\ff$ itself, we find a set $X$ on which $\ff(X)$ is dense and then a spread piece inside it. Then we remove the latter piece from $\ff$ and repeat. The gain is in the bound on the remainder $\m R$, stated in part (iii).
\begin{thm}\label{thmregularity}
  Let $\eta,\theta,\ell_1,\ell_2, r>0$. Assume that for a family $\ff\subset {[n]\choose k}$ and any its subfamily $\m P$ of size at least $\eta$ there is a set $X$ of size $\ell_1\le x\le \ell_2$ such that $|\m P| \le \theta\frac{|\m P(X)|}{{n-x\choose k-x}}$. Let $q\ge \ell_2$ and $r\le \frac{n-\ell_1}{k-\ell_1}$. Then there is a family $\m S$ of sets of size at most $q$ and a family $\m R\subset \ff$ such that the following holds.
  \begin{itemize}
    \item[(i)] $\ff= \m R\sqcup \bigsqcup_{S\in \m S} \m F_S[S]$;
    \item[(ii)] for any $A\in \s$ and the family $\ff_A\subset \ff$ the family $\ff_A(A)$ is $r$-spread;
    \item[(iii)] $|\m R|\le \max\big\{\eta, \theta \cdot \frac{r^{q+1-\ell_2}{n-q-1\choose k-q-1}}{{n-\ell_2\choose k-\ell_2}}\big\}$.
  \end{itemize}
\end{thm}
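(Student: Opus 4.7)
The plan is a greedy extraction. Set $\m P_0:=\ff$ and $\mathcal{S}:=\emptyset$. At stage $i\ge 0$, if $|\m P_i|\le \eta$, halt and return $\m R:=\m P_i$. Otherwise apply the hypothesis to the subfamily $\m P_i\subseteq\ff$ to get a set $X_i$ with $\ell_1\le |X_i|\le \ell_2$ and $|\m P_i|\le \theta|\m P_i(X_i)|/{n-|X_i|\choose k-|X_i|}$; then apply Observation~\ref{obs34} to the family $\m P_i(X_i)$ (on ground set $[n]\setminus X_i$) to obtain an inclusion-maximal $Y_i$ with $|\m P_i(X_i\cup Y_i)|\ge r^{-|Y_i|}|\m P_i(X_i)|$, so that $\m P_i(X_i\cup Y_i)$ is $r$-spread. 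Write $A_i:=X_i\cup Y_i$. If $|A_i|\le q$, record $A_i$ in $\mathcal{S}$, set $\ff_{A_i}:=\m P_i$ and $\m P_{i+1}:=\m P_i\setminus \m P_i[A_i]$, and continue; if instead $|A_i|>q$, halt and return $\m R:=\m P_i$. The procedure terminates because each continuing step strictly shrinks the pending family: $|\m P_i[A_i]|=|\m P_i(A_i)|\ge r^{-|Y_i|}|\m P_i(X_i)|>0$.

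Parts (i) and (ii) fall out immediately from this construction. Successive removals make the pieces $\ff_{A_i}[A_i]=\m P_i[A_i]$ pairwise disjoint, and together with $\m R$ they exhaust $\ff$; for each recorded $A=A_i$, the family $\ff_A(A)=\m P_i(A_i)$ is $r$-spread by the choice of $Y_i$ via Observation~\ref{obs34}.

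The main obstacle, and essentially the only nontrivial point, is bounding $|\m R|$ in the second halting branch, where $|A_i|>q$, i.e., $|Y_i|\ge q+1-|X_i|$. The key ingredient is the monotonicity claim that $j\mapsto r^j{n-|X_i|-j\choose k-|X_i|-j}$ is non-increasing: its consecutive ratio equals $r(k-|X_i|-j)/(n-|X_i|-j)$, which is at most $1$ precisely when $|X_i|+j\ge \ell_1$, and this is automatic from $|X_i|\ge \ell_1$ together with the hypothesis $r\le (n-\ell_1)/(k-\ell_1)$. Combining $|\m P_i(X_i)|\le r^{|Y_i|}|\m P_i(A_i)|\le r^{|Y_i|}{n-|X_i|-|Y_i|\choose k-|X_i|-|Y_i|}$ with this monotonicity at $|Y_i|\ge q+1-|X_i|$ gives $|\m P_i(X_i)|\le r^{q+1-|X_i|}{n-q-1\choose k-q-1}$. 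Feeding this into the density hypothesis and invoking the same factor-wise estimate once more in the form ${n-|X_i|\choose k-|X_i|}\ge r^{\ell_2-|X_i|}{n-\ell_2\choose k-\ell_2}$ yields
\[
|\m P_i|\le \theta\cdot \frac{r^{q+1-|X_i|}{n-q-1\choose k-q-1}}{{n-|X_i|\choose k-|X_i|}}\le \theta\cdot \frac{r^{q+1-\ell_2}{n-q-1\choose k-q-1}}{{n-\ell_2\choose k-\ell_2}},
\]
which is the bound in (iii). The rest is bookkeeping: the only real care is to verify both factor-wise estimates, but both reduce to the same inequality $r(k-z)/(n-z)\le 1$ for $z\ge \ell_1$, i.e., precisely the hypothesis on $r$.
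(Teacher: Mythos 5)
Your proposal is correct and follows essentially the same route as the paper: the same greedy extraction procedure (find a density set $X_i$, extend via Observation~\ref{obs34} to a maximal $S_i\supset X_i$, remove $\ff^i[S_i]$), and the same monotonicity principle $r(k-z)/(n-z)\le 1$ for $z\ge\ell_1$ to bound $|\m R|$ in the $|S_N|>q$ halting branch. The only cosmetic difference is the order in which the estimates are chained: the paper first shifts from ${n-|X_N|\choose k-|X_N|}$ to ${n-\ell_2\choose k-\ell_2}$ and then from $|S_N|$ down to $q+1$, whereas you go from $|Y_i|$ down to $q+1-|X_i|$ first and then shift the denominator; both reduce to the same factor-wise inequality.
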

\begin{proof}
Consider the following procedure for $i=1,\ldots $ with $\ff^1:=\ff$.
\begin{enumerate}
    \item If $|\ff^i|<\eta$ then stop.
    \item Take a set $X_i$ for $\ff^i$ as guaranteed in the statement of the theorem.
    \item Find a maximal $S_i\supset X_i$ that  $|\ff^i(S_i)|\ge  r^{|X_i|-|S_i|}|\ff^i(X_i)|$.
    \item If $|S_i|> q$  then stop. Otherwise, put $\ff^{i+1}:=\ff^i\setminus \ff^i[S_i]$.
\end{enumerate}
Note that Observation~\ref{obs34} and maximality of $S_i$ imply that $\ff^i(S_i)$ is $r$-spread. 
Let $N$ be the step of the procedure for $\ff$ at which we stop. The family $\s$ is defined as follows: $\s:=\{S_1,\ldots, S_{N-1}\}$. Clearly, $|S_i|\le q$ for each $i\in [N-1]$. The family $\ff_{A}$ promised in (ii) is defined to be $\ff^i[S_i]$ for $A=S_i$. Next, we put $\m R:=\ff^N$. Note that if $|\ff^N|\ge \eta$, then 
\begin{align*}|\ff^N|\le&\ \theta\frac{|\ff^N(X_N)|}{{n-|X_N|\choose k-|X_N|}}\le \theta\frac{ r^{|S_N|-|X_N|}  |\ff^{N}(S_N)|}{{n-|X_N|\choose k-|X_N|}}\\
\le&  \theta \Big(\frac{r(k-\ell_1)}{n-\ell_1}\Big)^{\ell_2-|X_N|}\cdot\frac{ r^{|S_N|-\ell_2}  |\ff^{N}(S_N)|}{{n-\ell_2\choose k-\ell_2}}\\
\le&  \theta \frac{ r^{|S_N|-\ell_2}  |\ff^{N}(S_N)|}{{n-\ell_2\choose k-\ell_2}}\le  \theta \frac{ r^{|S_N|-\ell_2}  {n-|S_N|\choose k-|S_N|}}{{n-\ell_2\choose k-\ell_2}}\\
\le& \theta \frac{ r^{q+1-\ell_2}  {n-q-1\choose k-q-1}}{{n-\ell_2\choose k-\ell_2}}\end{align*}
Since either $|S_N|>q$ or $|\ff^N|<\eta$, we get the inequality on $|\m R|$.
\end{proof}

\begin{lem}\label{lemtint} Let $\ff\subset {[n]\choose k}$ satisfy $\nu(\ff,t)\le s$. Next, let $\mathcal S\subset {[n]\choose \le \ell}$, $\ell\ge t$, have a property that for each $A\in \mathcal S$ there is a subfamily $\ff_A\subset \ff$ such that $\ff_A(A)$ is $r$-spread. Assume that for some $t'$, $t' \le t$ the following conditions are satisfied: \begin{align}\label{eqint1}r^{\big\lceil\frac{t-t'+1}2\big\rceil}\ge& 2s \Big(\frac{2e(\ell-t'+1)}{t-t'+1}\Big)^{\big\lceil\frac{t-t'+1}2\big\rceil},\\
\label{eqint2} r\ge& 2^{12}(s+1)\log_2(4k).
\end{align}
Then $\nu(\m S, t')\le s$. \end{lem}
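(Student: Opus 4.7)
I will argue by contradiction: assume $\nu(\mathcal{S}, t') \ge s+1$ and fix witnesses $A_0, A_1, \ldots, A_s \in \mathcal{S}$ with $|A_i \cap A_j| < t'$ for all $i \ne j$. The task is to construct $F_0, F_1, \ldots, F_s \in \ff$ with $A_i \subseteq F_i$ and $|F_i \cap F_j| < t$, which contradicts $\nu(\ff, t) \le s$. Set $u := \lceil (t - t' + 1)/2 \rceil$, so that $u \ge (t-t'+1)/2$ and $2u \le t - t' + 2$. The sets will be built iteratively: write $F_i = A_i \sqcup X_i$ and choose $X_i$ from the $r$-spread family $\ff_{A_i}(A_i)$ so that at each step I maintain the invariants
\begin{itemize}
\item[(I)] $|X_i \cap A_k| \le u - 1$ for every $k \ne i$, so that $|F_i \cap A_k| \le t' + u - 2$, and
\item[(II)] $|X_i \cap (F_j \setminus A_i)| \le u - 1$ for every $j < i$.
\end{itemize}
Invariant (I) at step $j < i$ applied with $k = i$ gives $|A_i \cap F_j| \le t' + u - 2$, and combined with (II) it yields $|F_i \cap F_j| = |A_i \cap F_j| + |X_i \cap (F_j \setminus A_i)| \le t' + 2u - 3 \le t - 1$, as required.

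Invariant (I) will be controlled by the direct spread bound: for any $B \subseteq [n]\setminus A_i$, $r$-spreadness of $\ff_{A_i}(A_i)$ gives $\Pr_X[|X \cap B| \ge u] \le \binom{|B|}{u} r^{-u}$. I apply this with $B = A_k \setminus A_i$. In the extremal case $|A_i \cap A_k| = t' - 1$ we have $|B| \le \ell - t' + 1$, and condition \eqref{eqint1} -- using $u \ge (t-t'+1)/2$, so that $2e(\ell-t'+1)/(t-t'+1) \ge e(\ell-t'+1)/u$ -- bounds the probability by $1/(2s)$. For smaller $|A_i \cap A_k|$ the permitted threshold for $|X_i \cap A_k|$ relaxes, and a short monotonicity/case analysis over $|A_i \cap A_k|$ preserves the $1/(2s)$ bound. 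A union bound over the $s$ indices $k \ne i$ keeps the failure of (I) below $1/2$.

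Invariant (II) is more delicate: the set $F_j \setminus A_i$ has size up to $k$, and the crude spread estimate $\binom{k}{u} r^{-u}$ would require $r \gtrsim k$, which is not implied by \eqref{eqint2}. Instead, I apply the Spread Lemma (Theorem~\ref{thmtao}) to $\ff_{A_i}(A_i)$: with $\delta = 2^{10}/r$ and $\beta$ of order $\log_2((s+1)k)$, condition \eqref{eqint2} arranges that $\beta\delta \le 1/(2(s+1))$ while $(5/\log_2(r\delta))^\beta k \le 1/(4(s+1))$. A $(\beta\delta)$-random subset $W$ of $[n]\setminus A_i$ then contains some $X \in \ff_{A_i}(A_i)$ with probability at least $1 - 1/(4(s+1))$, and I take $X_i \subseteq W$ to be any such $X$. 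Consequently $|X_i \cap (F_j \setminus A_i)| \le |W \cap (F_j \setminus A_i)|$, which is stochastically dominated by $\mathrm{Bin}(k, \beta\delta)$; a binomial tail estimate, calibrated via the $2^{12}$ in \eqref{eqint2}, bounds the per-$j$ failure probability of (II) by $1/(4s)$, so a union bound over $j < i$ keeps the total failure of (II) below $1/2$. Since the total failure probability over (I) and (II) is then strictly less than $1$, a valid $X_i$ exists at each step, and the iterative construction produces the desired $F_0, \ldots, F_s$.

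\textbf{Main obstacle.} The crux is coordinating the two spread inputs: \eqref{eqint1} provides fine, exponential-in-$u$ control of intersections with the small cores $A_k$ (size $\le \ell$), while \eqref{eqint2} powers the Spread Lemma step handling intersections with the large petals $F_j \setminus A_i$ (size $\le k$). The constant $2^{12}$ in \eqref{eqint2} is calibrated precisely so that the Spread Lemma and the subsequent binomial tail bound both fit comfortably under the required union bounds; verifying this compatibility, together with the case analysis around smaller values of $|A_i \cap A_k|$ where the convenient bound $|A_k \setminus A_i| \le \ell - t' + 1$ is not available, is the subtlest part of the argument.
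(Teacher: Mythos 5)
Your plan has the right overall shape — restrict attention to subfamilies of $\ff_{A_i}(A_i)$ whose members have small intersection with the cores $A_j\setminus A_i$, then invoke the Spread Lemma — and condition \eqref{eqint1} is used in essentially the same way as in the paper. But invariant (II), and the iterative strategy that it supports, is where the argument breaks.

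The problem is quantitative and unfixable by a better tail bound. After choosing $F_1,\ldots,F_{i-1}$, you need $X_i$ to satisfy $|X_i\cap(F_j\setminus A_i)|\le u-1$ for each $j<i$, where $u=\lceil(t-t'+1)/2\rceil$. In the paper's application this lemma is invoked with $t'=t$ (the last bootstrapping step), so $u=1$ and (II) demands that $X_i$ be \emph{disjoint} from $F_j\setminus A_i$, a set of size up to $k-t$. Your $X_i$ lives inside a $(\beta\delta)$-random $W$ with $\beta\delta=\Theta(1/(s\log k))$ (or $\Theta(1/s)$, either way), so $|W\cap(F_j\setminus A_i)|$ has expectation $\Theta(k/(s\log k))$, which is typically far larger than $u-1=0$. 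No Chernoff/binomial calibration of the constant $2^{12}$ can make $\Pr[|W\cap(F_j\setminus A_i)|\ge u]$ small when the mean exceeds the threshold; the failure probability is in fact close to $1$. The same issue already arises whenever $t-t'\ll k/s$, which is the regime the lemma is designed for.

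What the paper does instead is \emph{not} iterative: after trimming to $\mathcal G_i\subset\ff_{A_i}(A_i)$ (removing the sets that meet some $A_j\setminus A_i$ in $\ge\lceil(t-a_{ij})/2\rceil$ elements, controlled by \eqref{eqint1}), it applies the Spread Lemma to each $\mathcal G_i$ and then the rainbow-matching lemma of Keevash--Lifshitz--Long--Minzer (Lemma~\ref{lemhls}) to obtain $G'_1,\ldots,G'_{s+1}$ with $G'_i\in\mathcal G_i$ that are \emph{pairwise disjoint, simultaneously}. With $G_i:=A_i\cup G'_i$, the intersection $G_i\cap G_j$ then decomposes as $(A_i\cap A_j)\cup\bigl(G'_i\cap(A_j\setminus A_i)\bigr)\cup\bigl(G'_j\cap(A_i\setminus A_j)\bigr)$: the petal--petal term vanishes by the rainbow disjointness, and the two petal--core terms are each $<(t-a_{ij})/2$ by the trimming. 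This sidesteps exactly the large-petal problem: one never needs $G'_i$ to avoid $F_j\setminus A_i$, only to avoid the small set $A_j\setminus A_i$ and the (disjoint-by-construction) $G'_j$. Your proposal would need to be reorganized around this simultaneous rainbow selection rather than a greedy sequential one; the binomial-tail step should be dropped entirely.
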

\begin{proof}

  Take $s+1$ (not necessarily distinct) $A_1,\ldots, A_{s+1}\in \mathcal S$  and assume that $a_{ij}:=|A_i\cap A_j|<t'$ for all $i\ne j\in [s+1]$. 
  Put $p = \frac{t-t'}2$ and for $i\in [s+1]$ consider
  $$\g_i:=\ff_{A_i}(A_i)\setminus\Big\{F\in \ff_{A_i}(A_i): \ \exists j\in[s+1], \text{ such that } |F\cap  (A_j\setminus A_i)|\ge \frac{t-a_{ij}}2\Big\}.$$ The size of the latter family is at most
 \begin{align*}\sum_{j\in [s+1]\setminus\{i\}}{| A_j\setminus A_i|\choose \big\lceil\frac{t-a_{ij}}2\big\rceil}\max_{X: |X| = \big\lceil\frac{t-a_{ij}}2\big\rceil, X\cap A_i = \emptyset}|\ff_{A_i}(A_i\cup X)|\le&\\  \sum_{j\in [s+1]\setminus\{i\}}{\ell-a_{ij}\choose \big\lceil\frac{t-a_{ij}}2\big\rceil} r^{-\big\lceil\frac{t-a_{ij}}2\big\rceil} |\ff_{A_i}(A_i)|\le&\\
  \sum_{j\in [s+1]\setminus\{i\}} \Big(\frac{e(\ell-a_{ij})} {\big\lceil\frac{t-a_{ij}}2\big\rceil} \Big)^{\big\lceil\frac{t-a_{ij}}2\big\rceil} r^{-\big\lceil\frac{t-a_{ij}}2\big\rceil}|\ff_{A_i}(A_i)|\le&\\
  \sum_{j\in [s+1]\setminus\{i\}} \Big(\frac{2e(\ell-t'+1)}{t-t'+1} \Big)^{\big\lceil\frac{t-a_{ij}}2\big\rceil} r^{-\big\lceil\frac{t-a_{ij}}2\big\rceil}|\ff_{A_i}(A_i)|
  \le&\\
  s\Big(\frac{2e(\ell-t'+1)}{r(t-t'+1)} \Big)^{\big\lceil\frac{t-t'+1}2\big\rceil} |\ff_{A_i}(A_i)|\le&\ \frac 12 |\ff_{A_i}(A_i)|,\end{align*}
  where we pass from the third to the fourth line using $\frac{e(\ell-a_{ij})} {\lceil\frac{t-a_{ij}}2\rceil}\le \frac{2e(\ell-a_{ij})} {t-a_{ij}}\le \frac{2e(\ell-t'+1)} {t-t'+1}$  since $\ell\ge t$ and $a_{ij}\le t'-1$. In the last inequality we use  \eqref{eqint1}.

  This implies that $|\g_i|\ge \frac 12 |\ff_{A_i}(A_i)|$.
 Because of this and the trivial inclusion $\g_i(Y)\subset \ff_{A_i}(A_i\cup Y)$, valid for any $Y$,  we conclude that $\g_i$ is $ \frac r2$-spread for all $i\in [s+1]$. By \eqref{eqint2}, we have $\frac r2 > 2^{11}(s+1)\log_2(4k)$.  

  What follows is an application of Theorem~\ref{thmtao}. Let us put $\beta= \log_2(4k)$ and $\delta = (2(s+1)\log_2(4k))^{-1}$. Note that $\beta\delta = \frac 1{2(s+1)}$ and $\frac r2\delta > 2^{10}$ by \eqref{eqint2}.  Theorem~\ref{thmtao} implies that a $\frac{1}{2(s+1)}$-random subset $W_i$ of $[n]\setminus A_i$ contains a set from $\g_i$ with probability strictly bigger than
  $$1-\Big(\frac 5{\log_2 2^{10}}\Big)^{\log_2 (4k)} k = 1-2^{-\log_2 (4k)} k = \frac 34.$$
Next, note that by monotonicity the event we described is contained in (actually, coincides with) the following event:
\begin{quote}
  A $\frac{1}{2(s+1)}$--random subset $W_i$ of $[n]$ contains a set $G_i$ for some $G_i\in \g_i$.
\end{quote}
Thus the probability of the latter event is also at least $\frac 34$. Consider the upwards closed families $$\m Q_i:=\{F\subset [n]: \exists G\in \g_i \text{ such that } G \subset F\},$$
$\m Q_i\subset 2^{[n]}$. Then the last lower bound on the probability is equivalent to saying $\mu_{1/2(s+1)}(\m Q_i)\ge \frac 34$ for each $i\in [s+1]$. We shall apply a variant of the result from Huang Loh and Sudakov \cite{HLS} due to Keevash, Lifshitz, Long and Minzer \cite{KLLM}.
We say that a family $\m G$ is \textit{upward closed} if for any $A \in \m G$ and $B, A \subset B,$ one has $B \in \m G$.

\begin{lem}
\label{lemhls}
Fix $p_1,\ldots, p_s$ so that $\sum_{i = 1}^{s+1} p_i \le 1/2$. Let $\m Q_1, \ldots, \m Q_{s+1}$ be upward closed subfamilies of $2^{[n]}$ such that
    \begin{align*}
        \mu_{p_i} (\m Q_i) \ge 3 (s+1) p_i.
    \end{align*}
    Then there exist disjoint sets $Q_1 \in \m Q_1, \ldots, Q_{s+1} \in \m Q_{s+1}$.
\end{lem}
Applying this Lemma to $\m Q_1,\ldots, \m Q_{s+1}$ with $p_i = 1/2(s+1)$ implies that there exist a `rainbow' matching $Q_i\in \m Q_i$, $i\in [s+1]$. Therefore, there are sets $G'_i\in \m G_i$, $i\in [s+1]$, such that $G'_i\cap G'_j = \emptyset $. Consider the $(s+1)$-tuple $G_1,\ldots, G_{s+1}$, where $$G_i = G_i'\cup A_i.$$ We know for $G_i,G_j$ that their parts on $[n]\setminus (A_i\cup A_j)$ are  disjoint. On the other hand, by the definition of $\g_i$, we have $|G_{i}\cap A_j\setminus A_i|, |G_{j}\cap A_i\setminus A_j|<\frac {t-a_{ij}}2$, and thus
\begin{align*}|G_{i}\cap G_{j}|\le& |A_{i}\cap A_{j}|+|G_{i}\cap (A_j\setminus A_i)|+ |G_{j}\cap (A_i\setminus A_j)|\\
<& a_{ij}+\frac {t-a_{ij}}2+\frac {t-a_{ij}}2=t\end{align*}
 for distinct $i,j$. We have $G_1,\ldots, G_{s+1}\in\ff$, which contradicts the fact that $\nu(\ff,t)\le s$.
\end{proof}

For convenience, let us restate the main theorem of this section. 
\approxone

\subsection{Proof of Theorem~\ref{thmapprox1}} Let us first deal with the first part of the theorem, i.e., the case $\alpha=\beta=0.5$.
  If $k\le t+(t/s)^{1/2}+2\sigma+\log_2(16 s^4t^2)$ then the family $\ff$ itself can serve as the family $\m S$. In what follows, we assume that $k> t+(t/s)^{1/2}+2\sigma+\log_2(16 s^4t^2)$.
  The proof of the theorem is a bootstrapping argument that goes back and forth between Theorem~\ref{thm2sets} and a combination of Theorem~\ref{thmregularity} with Lemma~\ref{lemtint}.

  If $k\ge 10t+t\log_2 (st)+\sigma$ then we first perform steps A(0), B(0). If not, then we skip these and put $\m S^{(0)} = \ff$, $q^{(0)} = 10t+t\log_2 (st)+\sigma$ and $t'^{(0)} = t-\lceil t^{1/2}\rceil$.

   {\bf Step A(0) } We apply Theorem~\ref{thmregularity} to $\ff$  with $\ell_1 = \ell_2 = x=0$, $\theta = 1$, $\eta = 0 $, $r = 2^{9}(st)^{1/2}\big(1+\frac \sigma t\big)\log_2(st)$, $q=q^{(0)} = t+t\log_2(2^{9}(st))+\sigma = 10t+t\log_2(st)+\sigma$.
    (Basically, we ignore the part of the statement that looks
    for the `dense' part of $\ff$ and do a `normal' spread
    approximation.) We find a family $\m S^{(0)} = \m S$ of
    $(\le 10t+t\log_2(st)+\sigma)$-uniform sets and a family $\m R$ such that

\begin{align*}|\m R|&\le \Big(2^{9} (st)^{1/2}\big(1+\frac \sigma t\big)\log_2(st)\Big)^{q}{n-q\choose k-q}\\&\le
\Big(2^{9} (st)^{1/2}\big(1+\frac \sigma t\big)\log_2(st)\Big)^{q} \Big(\frac{k-t}{n-t}\Big)^{q-t}{n-t\choose k-t}\\
&\le \Big(2^{9} (st)^{1/2}\big(1+\frac \sigma t\big)
\log_2(st)\Big)^{q} \Big(\frac{1}{2^{12} (st)^{1/2}\big(1+\frac \sigma t\big)\log_2(st)}\Big)^{q-t}{n-t\choose k-t}\\
& =
8^{-q+t} \Big(2^9 (st)^{1/2}\big(1+\frac \sigma t\big)\log_2(st)\Big)^{t}{n-t\choose k-t}\\
&\le
2^{-\sigma}{n-t\choose k-t}.\end{align*}
   Here the third inequality is due to the bound $n\ge t+C(st)^{1/2}\big(1+\frac\sigma t\big)(k-t)\log_2\frac{n-t}{k-t}$, which in particular implies $\log_2\frac{n-t}{k-t}\ge \frac 12\log_2(st)$. The
   last inequality is due to our choice of $q$ and the fact that $(1+\frac \sigma t)^t\le 4^\sigma$.

   {\bf Step B(0) } For each $A\in \m S$ there is a family $\ff_A\subset \ff$ such that $\ff_A(A)$ is $r$-spread. Let us apply Lemma~\ref{lemtint} to the spread decomposition of $\ff$ constructed in Step A(0) with $t'=t'^{(0)} = t-\lceil t^{1/2}\rceil $, $\ell=q^{(0)}$ and the same $r$. It is easy to see that \eqref{eqint2} is satisfied. As for \eqref{eqint1}, it is sufficient to have
  $$\Big(2^9(st)^{1/2}\big(1+\frac \sigma t\big)\log_2(st)\Big)^{\lceil\frac{t^{1/2}+1}2\rceil}\ge 2s\Big(\frac{2e\ell}{\lceil t^{1/2}\rceil +1}\Big)^{\lceil\frac{t^{1/2}+1}2\rceil},$$
  implied by our choice of $\ell$
  and the inequality $\lceil\frac{t^{1/2}+1}2\rceil\ge 2$, which is valid in our assumptions. (Note that the last inequality is equivalent to $t> 1$.) 
  We conclude that $\m S$ satisfies $$\nu\big(\m S, t-\lceil t^{1/2}\rceil \big)\le s.$$

  The next series of bootstrapping steps alternate Theorem~\ref{thm2sets}, in which we improve upon the bound on $|\m F|$, and a combination of Theorem~\ref{thmregularity} with Lemma~\ref{lemtint}, in which we improve upon the uniformity of $\m S$ and the value of $t'$.
   For each $i = 1,\ldots, i_0$ (where $i_0$ shall be defined below) perform the following steps.
For the moment, let us assume  that $k-t$ is not too small in the following sense:
         \begin{equation}\label{eqsmallk} k-t'^{(0)}\le 2(k-t).\end{equation}
   That is, $k\ge t+\lceil t^{1/2}\rceil $.
  \begin{itemize}
    \item[{\bf Step A(i)}] Apply Theorem~\ref{thm2sets} to each subfamily $\ff'$ of $\ff$ with size bigger than $2^{-\sigma}{n-t\choose k-t}$ with $\m S^{(i-1)}$ playing the role of $\m G$; $t_1^{(i)} = t'^{(i-1)}$ playing the role of $t_1$;  $\ell^{(i)} = q^{(i-1)}$ playing the role of $\ell$;, $\lambda^{(i)} = \frac{2^{-\sigma} {n-t\choose k-t}}{\binom{n-t_1^{(i)}}{k-t_1^{(i)}}}$ playing the role of $\lambda$.

       We will maintain $t_1^{(1)}\le t_1^{(2)}\le\ldots\le t_1^{(i_0)}\le t$. We assumed \eqref{eqsmallk}, and, given that $t_1^{(i)}\ge t_1^{(1)}=t'^{(0)}$, we have $k-t_1^{(i)}\le 2(k-t)$. Using the bound \begin{equation}\label{eqnst} (n-t_1^{(i)})/((k-t_1^{(i)})t_1^{(i)})\ge (n-t)/(2(k-t)t) \ge 2^{10}(s/t)^{1/2}\log_2\frac{n-t}{k-t}\end{equation} in the second line below, for each $\ff'$ we get a set $X$ so that the size $x$ of $X$ satisfies
       {\small \begin{align*} t_1^{(i)}\le x\le& t_1^{(i)}
       +4\Big(\frac{(k-t^{(i)}_1)t_1^{(i)} (q^{(i-1)}-t_1^{(i)})^{2}}{n}\Big)^{1/3}+ \log_2\big(s^2t(\lambda^{(i)})^{-1}\big)\\
        \le& t+(t/s)^{1/6}\Big(\frac{q^{(i-1)}-t_1^{(i)}}{100\log_2\frac{n-t}{k-t}}\Big)^{2/3} +\sigma+\log_2(s^2t)+(t-t_1^{(i)})\log_2 \frac{n-t}{k-t}:=\ell_2^{(i)},
        \end{align*}}
        where for the last summand we used that $\lambda^{(i)}\ge 2^{-\sigma}\big(\frac{k-t}{n-t}\big)^{t-t_1^{(i)}}$. We  also get the relation between the sizes of the families $\ff'[\m S^{(i-1)}]$ and $\ff'[X]$. Putting  $|\ff'[X]| = \beta^{(i)}{n-x\choose k-x},$ we have
            \begin{align*}
          |\m F[\m S^{(i-1)}]|&\le 8s^2t_1^{(i)}e^{3\big(\frac{(k-t_1^{(i)})t_1^{(i)} (q^{(i-1)}-t_1^{(i)})^2}{n-t_1^{(i)}}\big)^{1/3}} \beta^{(i)}{n-t_1^{(i)}\choose k-t_1^{(i)}}\\
          &\overset{\eqref{eqnst}}{\le} 8s^2t2^{(t/s)^{1/6}\big(\frac{q^{(i-1)}-t_1^{(i)}}{100\log_2\frac{n-t}{k-t}}\big)^{2/3}} \beta^{(i)}{n-t_1^{(i)}\choose k-t_1^{(i)}}
        \end{align*}
    \item[{\bf Step B(i)}] Apply Theorem~\ref{thmregularity} to $\ff$ with $\ell_1^{(i)} = t_1^{(i)}$ playing the role of $\ell_1$; $\ell_2^{(i)}$ as defined in Step A(i) above playing the role of $\ell_2$; $$\eta^{(i)}=2^{-\sigma} {n-t\choose k-t} = \lambda^{(i)}\binom{n-t_1^{(i)}}{k-t_1^{(i)}}$$
        playing the role of $\eta$;
        $$\theta^{(i)} = 8s^2t2^{(t/s)^{1/6}\big(\frac{q^{(i-1)}-t_1^{(i)}}{100\log_2\frac{n-t}{k-t}}\big)^{2/3}} {n-t_1^{(i)}\choose k-t_1^{(i)}}$$
        playing the role of $\theta$;
        $r=\frac{n-t_1^{(i)}}{2(k-t_1^{(i)})}$, and
        {\small \begin{align}\notag q^{(i)} =& \ell_2^{(i)}+(t/s)^{1/6}\Big(\frac{q^{(i-1)}-t_1^{(i)}}{100\log_2\frac{n-t}{k-t}}\Big)^{2/3}+ \log_2\frac{8s^2t {n-t_1^{(i)}\choose k-t_1^{(i)}}}{2^{-\sigma}{n-t\choose k-t}}\\
        \label{boundq}\le& t_1^{(i)}+2(t/s)^{1/6}\Big(\frac{q^{(i-1)}-t_1^{(i)}} {100\log_2\frac{n-t}{k-t}}\Big)^{2/3} +2\sigma+\log_2(8s^4t^2)+2(t-t_1^{(i)})\log_2 \frac{n-t}{k-t}.
        \end{align}}
         Note that, by Step A(i), for any family $\ff'[S^{(i-1)}]$ (playing the role of $\m P$) of size at least $\eta$ the condition on the size of $\ff'[S^{(i-1)}]$ vs the size of $\ff'[X]$ is automatically satisfied by our choice of $\theta^{(i)}$.  The condition on $r$ is also clearly satisfied, and thus Theorem~\ref{thmregularity} is indeed applicable. Below we show that the value of $q^{(i)}$ is chosen in such a way that $$|\m R^{(i)}|\le 2^{-\sigma}{n-t\choose k-t}.$$ Recall Theorem~\ref{thmregularity} (iii). The first expression in the maximum, $\eta^{(i)}$, immediately gives exactly this upper bound. The second expression in the maximum in the RHS of Theorem~\ref{thmregularity} (iii) is at most $$2^{-(q^{(i)}+1-\ell_2^{(i)})}\theta^{(i)}=2^{-\sigma} {n-t\choose k-t}.$$

         We also get a spread decomposition of $\ff$ via a family $\m S^{(i)}$ of uniformity at most $q^{(i)}$. We now can apply Lemma~\ref{lemtint} to $\ff$ and $\m S^{(i)}$ with $r=\frac{n-t_1^{(i)}}{2(k-t_1^{(i)})}$ and any
         $t'^{(i)},$ $t'^{(i-1)}\le t'^{(i)}\le t$ that plays the role of $t$, provided that $t'^{(i)}$ satisfies \begin{equation}\label{eqchoicet} \frac{q^{(i)}-t'^{(i)}+1}{t-t'^{(i)}+1} \le \frac{C' \big(\sigma+(ts)^{1/2}\log_2\frac{n-t}{k-t}\big)}{s^{1/\lceil (t-t'^{(i)}+1)/2\rceil}}\end{equation} for some large constant $C'<C$.  It is easy to that \eqref{eqint2} is satisfied since $n\ge C(k-t)s\log_2 n$ and thus $r\ge \frac C4 s\log_2 n$ by \eqref{eqsmallk}. As for condition  \eqref{eqint1}, it is satisfied whenever the inequality \eqref{eqchoicet} above is satisfied. Indeed, remark that, again in the assumption \eqref{eqsmallk}, we have $n-t\ge \frac C4 (st)^{1/2}(k-t_1^{(i)})\log_2 \frac {n-t}{k-t}+\frac C4 s(k-t_1^{(i)})(\sigma+\log_2 \frac {n-t}{k-t})$, and so the value of $r$ we chose is at least the numerator in the RHS of \eqref{eqchoicet}.

         As long as $t'^{(i)}\le t-2\log_2 s$, say, the RHS of \eqref{eqchoicet} is at least
         \begin{equation}\label{eqchoicet2}\frac {C'}2 \big(\sigma+(ts)^{1/2}\log_2\frac{n-t}{k-t}\big),\end{equation}
         and thus \eqref{eqchoicet} is implied by $t-t'^{(i)}+1\ge \frac{q^{(i)}-t'^{(i)}+1}{\frac {C'}2 \big(\sigma+(ts)^{1/2}\log_2\frac{n-t}{k-t}\big)}$. We put

           \begin{equation}\label{boundt} t'^{(i)} = \min\Bigg\{t-\lceil 2\log_2 s\rceil , t-\Big\lfloor \frac{q^{(i)}-t_1^{(i)}+1}{\frac {C'}2 \big(\sigma+(ts)^{1/2}\log_2\frac{n-t}{k-t}\big)}\Big\rfloor\Bigg\}.\end{equation}
 \end{itemize}
Note that the second expression in the minimum automatically satisfies inequality \eqref{eqchoicet} provided $t'^{(i)}=t^{(i+1)}_1\ge t^{(i)}_1$ (note that we have $t_1^{(i)}$ instead of $t'^{(i)}$, also cf. Step A(i)). In what follows, we analyze the dynamics of the   values $t-t'^{(i)}$ and $q^{(i)}-t'^{(i)}$ as the process moves forward up until $t'^{(i)}$ reaches the value $t-\lceil 2\log_2s\rceil $. It is sufficient to analyze the inequality in \eqref{boundq} and the equality in \eqref{boundt}. It should be clear that $t'^{(i)}$ does not decrease as a function of $i$ provided the value of $q^{(i)}$ does not increase as $i$ grows.

Let us show that
$q^{(i)}-t^{(i)}_1\le \frac 23 (q^{(i-1)}-t^{(i)}_1)$ provided
\begin{equation}\label{qdec} q^{(i-1)}-t^{(i)}_1\ge (t/s)^{1/2}+4\sigma+2\log_2(8s^4t^2)+ 8(1+\log_2 s)\log_2\frac{n-t}{k-t}.\end{equation}
First, remark that the displayed inequality in
particular implies $(t/s)^{1/6}\le (q^{(i-1)}-t^{(i)}_1)^{1/3}$.
Second, remark that, by \eqref{boundt} we have \begin{align*}t-t^{(i)}_1 = t-t'^{(i-1)}&=\max\Big\{\lceil 2\log_2 s\rceil, \Big\lfloor \frac{q^{(i-1)}-t_1^{(i-1)}+1}{\frac {C'}2 \big(\sigma+(ts)^{1/2}\log_2\frac{n-t}{k-t}\big)}\Big\rfloor\Big\}\\
&\le \lceil 2\log_2 s\rceil+\Big\lfloor \frac{q^{(i-1)}-t_1^{(i-1)}+1}{\frac {C'}2 \big(\sigma+(ts)^{1/2}\log_2\frac{n-t}{k-t}\big)}\Big\rfloor\\
&\le \lceil 2\log_2 s\rceil+1+\Big\lfloor \frac{q^{(i-1)}-t+1}{\frac {C'}2 \big(\sigma+(ts)^{1/2}\log_2\frac{n-t}{k-t}\big)}\Big\rfloor\\
&\le 2\log_2 s+2+0.01 \frac{q^{(i-1)}-t_1^{(i)}}{\log_2\frac{n-t}{k-t}},\end{align*}
where the last inequality holds provided $C'$ is large enough.
Substituting these two facts into \eqref{boundq} for $i$, we get
{\small $$q^{(i)}-t^{(i)}_1\le \frac 1{10}(q^{(i-1)}-t^{(i)}_1)+0.02 (q^{(i-1)}-t_1^{(i)}) +2\sigma+\log_2(8s^4t^2)+(4+4\log_2 s)\log_2\frac{n-t}{k-t}.$$}
Note that, by \eqref{qdec}, $2\sigma +\log_2(8s^4t^2)+(4+4\log_2 s)\log_2\frac{n-t}{k-t} \le \frac 12(q^{(i-1)}-t^{(i)}_1)$. Therefore, the displayed equation above implies $q^{(i)}-t^{(i)}_1\le \frac 1{10}(q^{(i-1)}-t^{(i)}_1)+0.02 (q^{(i-1)}-t_1^{(i)}) +\frac 12 (q^{(i-1)}-t_1^{(i)})$, and the RHS is smaller than $\frac 23(q^{(i-1)}-t_1^{(i)})$.

Thus, as long as \eqref{qdec} is valid, the value of $q^{(i)}$ decreases, and we have $q^{(i)}-t^{(i+1)}\le q^{(i)}-t^{(i)}\le \frac 23 (q^{(i-1)}-t^{(i)})$, so the difference between $q^{(i)}-t^{(i+1)}$ decays exponentially with $i$. In particular, we should reach the point when \eqref{qdec} is not valid any more in at most $10\log_2 (st+\sigma)$ steps, since $q^{(0)}-t^{(0)}\le 10t+\log_2(st)+\sigma$ and $(10t+\log_2(st)+\sigma)(2/3)^{10\log_2 (st+\sigma)}<1$. At this point, say, $i_0$, it is easy to see that the second expression in the minimum in the RHS of \eqref{boundt} is simply $t$ (since the fraction in the lower integer part is smaller than $1$), and thus $t'^{(i_0)} = t-\lceil 2\log_2 s\rceil$.

Then, we run Steps A(i) and B(i) again, with $t'^{(i)} = t-2$ for $i_0+1\le i\le i_0+\lceil\log_2 s\rceil=:i_1$ to get rid of the extra $\log_2 s$ factor. In order to satisfy \eqref{eqint1}, we need
$$n-t\ge C'(k-t)s^{1/2} \Big((t/s)^{1/2}+4\sigma+2\log_2(8s^4t^2)+8(1+\log_2 s)\log_2\frac{n-t}{k-t}\Big),$$ which is satisfied in our assumption on $n$. Repeating these steps up until step $i_1$ and analyzing how the value of $q^{(i)}-t_1^{(i)}$ evolves using \eqref{boundq} as before, we see that the value $q^{(i_1)}$ is at most
$ t+(t/s)^{1/2}+4\sigma+2\log_2(8s^4t^2)+ 8\log_2\frac{n-t}{k-t}$. (That is, the `extra $\log_2 s$ evaporated'.) Then we apply the second step with $t'^{(i_1)} = t$ and see that in order to satisfy \eqref{eqint1}, we need
$$n-t\ge C'(k-t)s \Big((t/s)^{1/2}+4\sigma+\log_2(8s^4t^2)+\log_2\frac{n-t}{k-t}\Big),$$
which is also valid in our assumptions. Plugging this into the A-step, we get a bound $q^{(i_1+1)}\le  t+(t/s)^{1/2}+3\sigma+\log_2(16 s^4t^2)$.

Thus, $\mathcal S^{(i_1)}$ gives us the desired approximation. Let us bound the size of the remainder. At every application of Theorem~\ref{thmregularity}, we had $|\m R^{(i)}|\le 2^{-\sigma}{n-t\choose k-t}$, and the number of steps was clearly at most $15\log_2(st+\sigma)$, and thus, over all, steps we accumulated at most $2^{-\sigma} 15\log_2(st+\sigma) {n-t\choose k-t}$ sets in the remainder.

Next, recall that we worked in the assumption \eqref{eqsmallk}. Let us deal with the situation when the assumption \eqref{eqsmallk} fails. Also, recall that $k$ is not too small: $k>t+(t/s)^{1/2}+2\sigma+\log_2(16 s^4t^2)$, otherwise we are done in the very beginning. It means that $(t/s)^{1/2}+2\sigma+\log_2(16 s^4t^2)\le k-t\le t^{1/2}$, and the family $\ff$ in a sense is a good low-uniformity approximation for itself. We shall put $ t_1^{(i)}= \lceil 2\log_2 s\rceil$ for steps $i=1,\ldots, 4\log_2 (st)$. Note that, given our lower bound on $k-t$, we have $k-t^{(i)}_1\le 2(k-t)$, and thus we can do the analysis as above. Most importantly, the RHS of \eqref{eqchoicet} in these conditions is again at least \eqref{eqchoicet2} and thus \eqref{eqchoicet} is easily seen to be satisfied. The rest of the analysis is identical.\\

Let us now prove the `more generally' part of the statement of the theorem. Assume that $n\ge Ct^{\alpha}s^{\beta}(k-t)\log_2 \frac{n-t}{k-t}$ with some $\alpha,\beta\ge 1/2$. Then the bound \eqref{boundq} improves to
\begin{multline}\label{boundq2}
  q^{(i)}\le   t_1^{(i)}+2t^{(1-\alpha)/3}s^{-\beta/3}\Big(\frac{q^{(i-1)}-t_1^{(i)}} {100\log_2\frac{n-t}{k-t}}\Big)^{2/3} \\ +2\sigma+\log_2(8s^4t^2)+2(t-t_1^{(i)})\log_2 \frac{n-t}{k-t}
\end{multline}
Thus, we may perform exactly the same analysis as above (only using the $\alpha=\beta=1/2$ case), arriving to $q^{(i_1)}\le t+(t/s)^{1/2}+3\sigma+\log_2(8 s^4t^2)$ and $t_1^{(i_1)} = t$. Then  we run the same procedure with $t^{(i)}=t$ for at most $2\log_2 (st)$ extra steps up until step $i_2$, and for which we use the bound \eqref{boundq2} instead of \eqref{boundq}. It is not difficult to check as before that if for some $i_1\le i<i_2$    $q^{(i)}> t+t^{(1-\alpha)}s^{-\beta}+8\sigma+8\log_2(8 s^4t^2),$ then $q^{(i_2+1)}-t\le \frac 12(q^{(i_2)}-t)$. This is impossible for more than $\log_2 (st)$ steps, which implies that $q^{(i_2)}\le t+t^{(1-\alpha)}s^{-\beta}+8\sigma+8\log_2(8 s^4t^2).$ Applying it for one more step, one sees that $8\sigma+8\log_2(8 s^4t^2)$ gets replaced by an expression that is at most $3\sigma+\log_2(16 s^4t^2)$. The bound on the size of the remainder remains valid since we had a slack in our bound on the number of steps.

This completes the proof of the theorem.

\section{Fine-grained structure in spread approximations} \label{sec4}

In this section, we study the structure in spread approximations of the nearly-extremal Hajnal--Rothschild families.
The following theorems give  a fine-grained (99\%) stability result concerning nearly-extremal
families $\ff$ with $\nu(\ff,t)\le t$. It is the second key step in the proof of our main theorem.
 Their proof occupies the rest of this section. Recall that $\aaa = {[n]\choose k}$.

First we state the harder, non-trivial, case of the theorem.

\begin{thm}\label{thmfinegrained}
   Let $n,k,s,t,\ell$ be positive integers, where $n,k\ge 2$, and
   $t\ge 400\ell^3s$ and $\ell \ge 1$. Suppose that $n> k+C(st)^{1/2}(k-t)$ and $n\ge Cs\ell^4(k-t)$.
   Let $\m S$ be a family of $\le (t+\ell)$-element subsets of $[n]$ that satisfies $\nu(\m S,t)\le s$.
    Let $\ff[\m S]\subset \m A[\m S]$ satisfy $|\ff[\m S]|>  h(n,k,s,t)-
    (\frac 12+\frac 1{10s})h(n,k,1,t)$. Then there exist sets $X_1,\ldots, X_s$ of
    size $t+2x_1,\ldots, t+2x_s,$ with $0\le x_i\le \ell/2$ for all $i\in [s]$, such that,
    putting $\m C_i:={X_i\choose \ell+x_i}$, we have $\m S = \m S[\cup_{i\in[s]} \m C_i]$.
\end{thm}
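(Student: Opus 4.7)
My plan is to prove Theorem~\ref{thmfinegrained} by induction on $s$, peeling off one $t$-intersecting clique at each step. The base case $s=1$ reduces to a stability form of the Ahlswede--Khachatrian complete $t$-intersection theorem: a $t$-intersecting family $\m S$ of $\le(t+\ell)$-uniform sets whose upper $k$-shadow $\ff[\m S]$ is at least a constant fraction of $h(n,k,1,t)$ must already be contained in $\m S[\binom{X_1}{t+x_1}]$ for some $X_1$ of size $t+2x_1$, $0\le x_1\le\ell/2$. Such a stability statement can be extracted from the shifting proofs of Ahlswede--Khachatrian, exploiting the small uniformity gap $\ell$ and the strong separation $t\gg\ell^3 s$.

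For the inductive step, one isolates a single dense $t$-intersecting clique in $\m S$, removes it, and applies induction to the remainder. Concretely, examine the layers $\m S_u:=\m S\cap\binom{[n]}{u}$ for $u=t,t+1,\ldots,t+\ell$ and pick the largest $u^*=t+2x^*$ whose contribution to $\ff[\m S]$ is substantial. Since $|\ff[\m S]|$ is within $(\tfrac{1}{2}+\tfrac{1}{10s})h(n,k,1,t)$ of $h(n,k,s,t)$, at least one such layer has contribution close to $|\m D_{x^*}|$, corresponding on average to a single clique $\binom{X_i}{t+x_i}$. Within $\m S_{u^*}$, the Tur\'an-type averaging of Proposition~\ref{lem10} and Lemma~\ref{lem11}, combined with $\nu(\m S_{u^*},t)\le s$, produces a dense $t$-neighborhood $\m G_A\subseteq \m S_{u^*}$ around some $A\in\m S_{u^*}$. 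Applying the base-case stability statement to $\m G_A\cup\{A\}$ yields a candidate kernel $X_1$ of size $t+2x^*$ with $\m G_A\cup\{A\}\subseteq\binom{X_1}{t+x^*}$. The quantitative strength of the size hypothesis upgrades containment to equality $\binom{X_1}{t+x^*}\subseteq\m S$: a missing set in this clique would cost about $\binom{n-t-2x^*}{k-t-x^*}\ge|\m D_{x^*}|/s$ from $|\ff[\m S]|$, exceeding the available budget $(\tfrac{1}{2}+\tfrac{1}{10s})h(n,k,1,t)$ once one accounts for the $s-1$ remaining cliques.

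The final (and most delicate) step is to set $\m S':=\m S\setminus\m S[\binom{X_1}{t+x^*}]$ and show $\nu(\m S',t)\le s-1$, so the inductive hypothesis applies with parameter $s-1$. Suppose not; then there are pairwise $t$-disjoint $F_1,\ldots,F_s\in\m S'$, each with $f_i:=|F_i\cap X_1|\le t+x^*-1$ (since $F_i\notin\m S[\binom{X_1}{t+x^*}]$). Count: the number of $A\in\binom{X_1}{t+x^*}$ with $|A\cap F_i|\ge t$ is at most $\binom{f_i}{t}\binom{2x^*}{x^*}$, while $|\binom{X_1}{t+x^*}|=\binom{t+2x^*}{x^*}$. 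The ratio $\binom{t+2x^*}{x^*}/(s\binom{t+x^*-1}{t}\binom{2x^*}{x^*})$ is, up to constants, $t/(s\, x^*\binom{2x^*}{x^*})$, and the hypothesis $t\ge 400\ell^3 s$ together with $x^*\le\ell/2$ makes this ratio strictly greater than $1$. Hence one finds $A\in\binom{X_1}{t+x^*}$ with $|A\cap F_i|<t$ for all~$i$, producing $s+1$ sets witnessing $\nu(\m S,t)\ge s+1$---a contradiction. Feeding $\m S'$ into the induction produces the remaining kernels $X_2,\ldots,X_s$.

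The main obstacle is precisely the last step: ensuring the peeled clique is clean enough that its removal reduces the $t$-matching number. The counting is essentially tight and relies on the exact hypothesis $t\ge 400\ell^3 s$ together with the fact that the global size deficit $(\tfrac{1}{2}+\tfrac{1}{10s})h(n,k,1,t)$ can afford to lose only a constant fraction of the contribution of one clique. A secondary obstacle is the upgrade in Step~2 from ``approximate clique'' to the full clique $\binom{X_1}{t+x^*}\subseteq\m S$; this is where the per-clique budget $\tfrac{1}{2s}h(n,k,1,t)$ and the hypothesis $n\ge Cs\ell^4(k-t)$ combine with the relative sizes $|\m D_{x^*}|\asymp\binom{t+2x^*}{x^*}\binom{n-t-2x^*}{k-t-x^*}$ to force that not a single element of the clique can be omitted.
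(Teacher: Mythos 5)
Your overall plan — induction on $s$, locate one dense $t$-intersecting clique, remove it, and apply induction — is the same as the paper's, and the last step (find $A$ in the clique avoiding each $F_i$) is the right idea. But there are genuine gaps. First, the conclusion of the theorem is $\m S=\m S[\cup_i\m C_i]$, not $\cup_i\m C_i\subseteq\m S$; your Step~2, which tries to upgrade an approximate clique to $\binom{X_1}{t+x^*}\subseteq\m S$, is neither needed nor correct. The cost of omitting one $(t+x^*)$-subset $C$ of $X_1$ from $\m S$ is at most about $\binom{n-t-2x^*}{k-t-x^*}\approx |\m D_{x^*}|/\binom{t+2x^*}{x^*}$, not $|\m D_{x^*}|/s$ as you claim; those differ by a factor $\binom{t+2x^*}{x^*}/s$, which is typically enormous, so the budget argument for the upgrade fails. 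The paper never proves full containment — it only establishes (its \eqref{eqab}) that at least a $\frac{1}{125\ell}$-fraction of $\m C_1=\binom{A\cup B}{t+m}$ lies in $\m W_m\subseteq\m S$, and then balances this against a much tighter upper bound on the ``bad'' fraction.

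This exposes the second gap: your final count produces $A\in\binom{X_1}{t+x^*}$ with $|A\cap F_i|<t$, but without full containment you have no guarantee that $A\in\m S$, so no contradiction. To close this you must (a) quantitatively lower-bound the density of $\m S$ inside the clique and (b) upper-bound the bad fraction by something smaller; your bound $s\cdot\binom{f_i}{t}\binom{2x^*}{x^*}/\binom{t+2x^*}{x^*}$, giving a threshold $t\gtrsim s\,x^*\binom{2x^*}{x^*}$, is too crude: $\binom{2x^*}{x^*}$ is exponential in $x^*\le\ell/2$, so $t\ge 400\ell^3 s$ is not enough. A sharper estimate (as in the paper's Lemma~\ref{leminduction}: the fraction of sets in $\binom{A\cup B}{t+m}$ missing a fixed $Z_i=(A\cup B)\setminus A_i$ is $O(m\ell/t)$) gives the correct polynomial-in-$\ell$ threshold. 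Two further omissions: you invoke an unproved Ahlswede--Khachatrian stability statement as the base case (the paper handles $s=1$ by the same uniform argument — $\m S^{(1)}=\emptyset$ follows directly from its version of your Step~3), and you never verify that the size hypothesis $|\ff[\m S']|>h(n,k,s-1,t)-(\tfrac12+\tfrac1{10(s-1)})h(n,k,1,t)$ survives removal of the first clique (the paper's Lemma~\ref{lemboundh2}, part~2). Finally, the paper's decomposition $\m W_m=\m G'\sqcup\bigsqcup_j\m P_j$ relies on a preliminary reduction forcing $|\m W_m(T)|\le(s(\ell+1))^{m-|T|}$; without some spread-type control on $\m S$, the Tur\'an-averaging route to a dense $t$-neighborhood does not by itself bound the ``cross'' terms ($\m G'$) or the tail of each $\m P_j$.
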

In other words, the family $\m S$ only contains sets that contain something from one of the {\it cliques} $\m C_i$. The inequality  $t\ge 400\ell^3s$ is in our case necessary for getting into the non-trivial regime. In the case when this inequality is not valid, we shall prove the following theorem.

\begin{thm}\label{thmfinegrained2}
   Let $n,k,s,t,\ell$ be positive integers, where $n,k\ge 2$, and $\ell = 400\log_2 (st)$. Suppose that $ n-t\ge C (t+\ell)\ell(k-t)$, $n\ge k+C(st)^{1/2}(k-t)$ and $n>Cs\ell^4(k-t)$. Let $\m S$ be a family of $\le (t+\ell)$-element sets that satisfies $\nu(\m S,t)\le s$. Let $\ff[\m S]\subset \m A[\m S]$ satisfy $|\ff[\m S]|>  h(n,k,s,t)-(\frac 12+\frac 1{10s})h(n,k,1,t)$. Then there exist sets $T_1,\ldots, T_s$ of size $t$, such that $\m S = \m S[\{T_1,\ldots, T_s\}]$.
\end{thm}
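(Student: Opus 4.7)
My plan is to show that $\m S$ reduces to a matching of exactly $s$ disjoint $t$-sets under the simplification procedure, and then conclude via property~(3) of simplifications.

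First I would verify that in this regime the matching constructions dominate over cliques of positive size, so $h(n,k,s,t) = h'(n,k,t,s)$. Indeed, $n - t \ge C(t+\ell)\ell(k-t)$ forces $(t+2)(k-t)/(n-t) < 1$, giving $|\m D_1| < |\m D_0|$ and similarly for larger cliques $\m D_i$, and inclusion--exclusion then yields $h(n,k,s,t) = s\binom{n-t}{k-t}(1 - o(1))$. The assumptions also place us comfortably in the EKR regime, so $h(n,k,1,t) = \binom{n-t}{k-t}$.

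Next, I would extend $\m S$ to a maximal simplification $\m S''$ via property~(6) of simplifications, yielding an antichain with $\nu(\m S'',t) \le s$ and $|\aaa[\m S'']| \ge |\aaa[\m S]| \ge |\ff[\m S]|$. I rule out $\nu(\m S'',t) \le s-1$ because Corollary~\ref{corapproxtrivial} applied with the smaller value of $\nu$ gives $|\aaa[\m S'']| \le (s-1)\binom{n-t}{k-t}(1+o(1))$, contradicting the lower bound $(s - \tfrac12 - \tfrac1{10s})\binom{n-t}{k-t}(1-o(1))$. Hence $\nu(\m S'',t) = s$.

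The main step is to show $\m S''$ contains no set of size greater than $t$. Let $\ell'$ be the number of $t$-sets in $\m S''$. If $\m S''$ has some $F$ with $|F| > t$, then by the antichain property $F$ does not contain any of the existing $t$-sets in $\m S''$, and with $\nu(\m S'',t) = s$ this forces $\ell' \le s - 1$. I would apply Theorem~\ref{thmapproxtrivial} with $\aaa = \binom{[n]}{k}$, $r = n/k$, $q = t+\ell$, and $\varepsilon$ chosen slightly less than $\tfrac12 - \tfrac1{10s}$, obtaining
\[
|\aaa[\m S'']| \le \Big|\bigcup_{j=1}^{\ell'} \aaa[U_j]\Big| + \varepsilon(s-\ell')\binom{n-t}{k-t} \le \big(s - (1-\varepsilon)\big)\binom{n-t}{k-t},
\]
which contradicts the lower bound. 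Hence $\m S''$ consists of exactly $s$ $t$-sets $T_1, \ldots, T_s$, and by property~(3) of simplifications every $F \in \m S$ contains some $T_i$, giving $\m S = \m S[\{T_1,\ldots,T_s\}]$.

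The main technical obstacle is verifying the spread condition $\varepsilon r \ge 8esq$ of Theorem~\ref{thmapproxtrivial} for $\varepsilon \approx \tfrac12 - \tfrac1{10s}$, which amounts to $n \gtrsim esk(t+\ell)$. This requires careful combination of the two $n$-bounds: in the implicit regime complementary to Theorem~\ref{thmfinegrained} (where $t \lesssim s\ell^3$), one can bound $t+\ell \lesssim s\ell^3$ and then $n > Cs\ell^4(k-t)$ yields the needed lower bound on $n$ after comparing $k$ to $k-t$ and absorbing constants.
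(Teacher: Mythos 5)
There is a genuine gap in your proof, and you have in fact already put your finger on exactly where it lies but incorrectly claimed it can be fixed.

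Your argument routes everything through Theorem~\ref{thmapproxtrivial} (or Corollary~\ref{corapproxtrivial}), which requires $\varepsilon r \ge 8esq$ with $\aaa = \binom{[n]}{k}$. The family $\binom{[n]}{k}$ is $(n/k,\,t)$-spread and no better, so $r = n/k$ and the spread condition reads $n \gtrsim \frac{1}{\varepsilon}\,s\,k\,q$. With $\varepsilon$ close to $\tfrac12$ and $q \approx t+\ell$, this is $n \gtrsim sk(t+\ell)$, which has a factor of $k$ in it. But every hypothesis of Theorem~\ref{thmfinegrained2} has a factor of $(k-t)$ instead: $n-t \ge C(t+\ell)\ell(k-t)$, $n \ge k+C(st)^{1/2}(k-t)$, $n > Cs\ell^4(k-t)$. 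When $t$ is close to $k$ (say $k = t+1$, so $k-t = 1$), these hypotheses permit $n$ as small as roughly $\max\{(t+\ell)\ell,\ (st)^{1/2},\ s\ell^4\}$ with $\ell = 400\log_2(st)$, which is polylogarithmic-times-$t$. Meanwhile the needed spread condition demands $n \gtrsim sk(t+\ell) \approx st^2$. These are off by essentially a factor of $t/\log t$, so the spread condition simply fails and Theorem~\ref{thmapproxtrivial} is not applicable. Your final paragraph asserts that "comparing $k$ to $k-t$ and absorbing constants" bridges this, but $k/(k-t)$ is unbounded, so no constant absorbs it.

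This is not a cosmetic issue: it is precisely the reason the paper proves Theorem~\ref{thmfinegrained2} via the Section~\ref{sec4} machinery rather than the peeling approach of Section~\ref{secsimple}. The Section~\ref{sec4} argument uses the near-extremality of $\ff[\m S]$ (not merely $\nu(\m S,t)\le s$) to locate a dense layer $\m W_m$, decompose it into cliques $\m P_j$ plus an error $\m G'$, and then — arguing by contradiction — shows $m=0$ via the estimate
$$\frac{s\max_j|\ff(\m P_j')|}{|\m A(\m E_0)|} \le 2\Big(\frac{(t+2m)(k-t)}{n-t}\Big)^m \le \frac{1}{1000\ell},$$
which crucially has a $(k-t)$ in the numerator, matching the $(k-t)$'s in the hypotheses. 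It then peels off a single $t$-set $T_1$ dropping $\nu$ by one and inducts. If you want a self-contained argument you must use the fact that $\ff[\m S]$ is nearly extremal at every step, not just the combinatorial structure of $\m S$: the peeling/spread-approximation bound of Theorem~\ref{thmapproxtrivial} forgets the density information and is therefore too lossy in this regime.
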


\subsection{Bounding the sizes of the examples}
Let us denote
$$\mathcal E_i:=\bigsqcup_{j\in[s]}{X_j\choose t+i},$$
where $X_1,\ldots, X_s$ are some pairwise disjoint sets of size $t+2i$. We need the following technical lemma. 

\begin{lem}\label{lemboundaei}
  If $n,k,s,t,\ell$ satisfy the assumptions of Theorem~\ref{thmfinegrained} and $i\le \ell$, 
   then we have
\begin{equation}\label{eqboundaei}0.98{t+2i\choose i}{n-t-i\choose k-t-i}\le \frac{|\m A(\m E_i)|}s\le |\m D_{i}|
\le  {t+2i\choose i}{n-t-i\choose k-t-i}.\end{equation}
\end{lem}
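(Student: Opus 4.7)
My plan is to prove the three inequalities separately. The rightmost, $|\m D_i|\le\binom{t+2i}{i}\binom{n-t-i}{k-t-i}$, follows by double-counting pairs $(B,F)$ with $B\in\binom{[t+2i]}{t+i}$ and $B\subset F\in\binom{[n]}{k}$: there are exactly $\binom{t+2i}{t+i}\binom{n-t-i}{k-t-i}=\binom{t+2i}{i}\binom{n-t-i}{k-t-i}$ such pairs, while each $F\in\m D_i$ is counted at least $\binom{|F\cap[t+2i]|}{t+i}\ge 1$ times. For the middle inequality, write $\m A(\m E_i)=\bigcup_{j=1}^s\m D_i^{(j)}$, where $\m D_i^{(j)}:=\{F\in\binom{[n]}{k}:|F\cap X_j|\ge t+i\}$; since each $X_j$ is a $(t+2i)$-set, symmetry gives $|\m D_i^{(j)}|=|\m D_i|$, and the union bound yields $|\m A(\m E_i)|\le s|\m D_i|$.

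The left inequality is the main work. Applying inclusion--exclusion (the $X_j$ are pairwise disjoint), I obtain
\[
\frac{|\m A(\m E_i)|}{s}\ \ge\ |\m D_i|-\frac{s-1}{2}\,|\m D_i^{(1)}\cap\m D_i^{(2)}|.
\]
For the positive term, the $m=t+i$ summand of the expansion $|\m D_i|=\sum_{m\ge t+i}\binom{t+2i}{m}\binom{n-t-2i}{k-m}$ gives $|\m D_i|\ge\binom{t+2i}{i}\binom{n-t-2i}{k-t-i}$, so the comparison with the target $\binom{t+2i}{i}\binom{n-t-i}{k-t-i}$ reduces to the ratio
\[
\frac{\binom{n-t-2i}{k-t-i}}{\binom{n-t-i}{k-t-i}}\ =\ \prod_{j=0}^{i-1}\Bigl(1-\frac{k-t-i}{n-t-i-j}\Bigr)\ \ge\ 1-\frac{\ell(k-t)}{n-t-2\ell}.
\]
The hypothesis $n\ge k+C(st)^{1/2}(k-t)$ gives $(k-t)/(n-t)\le 1/(C(st)^{1/2})$, and together with $t\ge 400\ell^3 s$ and $i\le\ell$ this makes the right-hand side at least $0.999$.

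For the overlap, $\m D_i^{(1)}\cap\m D_i^{(2)}$ is empty unless $k\ge 2(t+i)$; otherwise $|\m D_i^{(1)}\cap\m D_i^{(2)}|\le\binom{t+2i}{t+i}^2\binom{n-2(t+i)}{k-2(t+i)}$ by choosing one $(t+i)$-subset in each $X_j$ and extending. Pairing the numerator and denominator factors of the falling-factorial form yields
\[
\frac{\binom{n-2(t+i)}{k-2(t+i)}}{\binom{n-t-i}{k-t-i}}\ \le\ \Bigl(\frac{k-t}{n-t}\Bigr)^{t+i},
\]
so combined with the crude bound $\binom{t+2i}{i}\le 4^{t+i}$ the relative overlap $\binom{s}{2}\binom{t+2i}{i}\binom{n-2(t+i)}{k-2(t+i)}/\binom{n-t-i}{k-t-i}$ is at most $(s-1)(4/(C(st)^{1/2}))^{t+i}$, which is exponentially small in $t\ge 400\ell^3 s$ and is certainly $\le 0.01$. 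Assembling the two pieces gives $|\m A(\m E_i)|/s\ge 0.98\binom{t+2i}{i}\binom{n-t-i}{k-t-i}$, as required.

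The main technical nuisance is to treat both parameter regimes uniformly---$k-t$ a significant fraction of $n$ versus $k-t$ very small compared to $t$; the hypothesis $n\ge k+C(st)^{1/2}(k-t)$ carries both cases through cleanly via the single bound $(k-t)/(n-t)\le 1/(C(st)^{1/2})$, after which the remainder of the argument is elementary arithmetic with binomial ratios.
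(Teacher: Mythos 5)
Your proof is correct and follows essentially the same route as the paper's: a Bonferroni lower bound on the union $\bigcup_j\m D_i^{(j)}$, binomial-ratio estimates to compare $\binom{n-t-2i}{k-t-i}$ with $\binom{n-t-i}{k-t-i}$ and to kill the pairwise term, and the double-counting upper bound on $|\m D_i|$. If anything your treatment of the overlap $|\m D_i^{(1)}\cap\m D_i^{(2)}|$ is slightly more careful --- the paper writes the second Bonferroni term as ${s\choose 2}\binom{n-2(t+2i)}{k-2(t+i)}$, eliding the $\binom{t+2i}{t+i}^2$ factor you correctly include, though since $\big(\tfrac{k-t}{n-t}\big)^{t+i}$ is exponentially small this does not affect the conclusion.
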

\begin{proof}
  It is not difficult to see that using inclusion-exclusion,
  \begin{equation*}|\m A(\m E_i)| = \sum_{j=1}^s (-1)^{j+1}{s\choose j}{n-j(t+2i)\choose k-j(t+i)}.\end{equation*}
  Bonferroni's inequality implies
  \begin{equation}\label{eqsizee}0\ge |\m A(\m E_i)|-s{t+2i\choose i}{n-t-2i\choose k-t-i}\ge -{s\choose 2}{n-2(t+2i)\choose k-j(t+i)}.\end{equation}
We have
  $$\frac{{n-2(t+2i)\choose k-2(t+i)}}{{n-(t+2i)\choose k-(t+i)}}\le \Big(\frac{k-(t+i)}{n-(t+2i)}\Big)^{t+i}\le \Big(\frac{k-t}{n-t}\Big)^{t+i}\le \frac{0.01}s$$
  by $n>Cs(k-t)$. Thus, the right hand size of \eqref{eqsizee} is at least $-0.01s{t+2i\choose i}{n-t-2i\choose k-t-i}$. We also have
  $$\frac{{n-t-i\choose k-t-i}}{{n-t-2i\choose k-t-i}}\le \Big(\frac{n-k}{n-k-i}\Big)^{k-t}\le e^{\frac{\ell(k-t)}{n-k-\ell}}<1.01,$$
  where the last inequality holds since $n\ge k+1000\ell  (k-t)$.

Using the last displayed inequality, we have 
{\small $$0.99 {t+2i\choose t+i}{n-t-i\choose k-t-i}\le {t+2i\choose t+i}{n-t-2i\choose k-t-i}\le|\m D_i|\le {t+2i\choose t+i}{n-t-i\choose k-t-i}$$}
and
 $$\frac{|\m A(\m E_i)|}s\ge 0.99{t+2i\choose t+i}{n-t-2i\choose k-t-i}\ge 0.98 {t+2i\choose t+i}{n-t-i\choose k-t-i}$$

\end{proof}

\subsection{Structure in the spread approximation}
Consider a family $\s\subset {[n]\le t+\ell}$ with $\nu(\m S,t)\le s$ and such that it is an {\it antichain}, i.e., $S_1\not\subset S_2$ for any $S_1\ne S_2\in \m S$. Let us repeatedly apply the following procedure to $\m S$: if there is a set $T$ and  a subfamily $\m T\subset \m S$, such that $\m T(T)$ is $\alpha$-spread for $\alpha>s(\ell+1)$, then we replace in $\m S$ the subfamily of sets containing $T$ with $T$: $\m S:=\m S\setminus \m S[T] \cup \{T\}$. We stop once it is impossible to replace further. We note that $\ff[\m S]$ with the `new' $\m S$ contains $\ff[\m S]$ with the `original' $\m S$. The following claim allows us to `forget' about the initial $\m S$ and work with new $\m S$ (in what follows, $\m S$ stands for the `new' $\m S$).

\begin{cla}
  The resulting family $\m S$ satisfies $\nu(\m S,t)\le s$. (In particular, all sets in $\m S$ are of size $\ge t$.)
\end{cla}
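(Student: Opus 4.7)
The plan is to establish by induction on the number of replacement steps that the current family $\m S$ satisfies $\nu(\m S,t)\le s$ and contains no set of size $<t$ at every stage; the base case holds by hypothesis. For the inductive step, fix an iteration $\m S\mapsto\m S':=(\m S\setminus\m S[T])\cup\{T\}$ driven by $\m T\subset\m S$ with $\m T(T)$ being $\alpha$-spread, $\alpha>s(\ell+1)$. The only computational tool needed in either part is the elementary inequality $\binom{\ell+m}{m}\le(\ell+1)^m$ for $m\ge1$ (cf.\ the derivation $s\binom{i-x}{t-x}\le s(i-t+1)^{t-x}<\alpha^{t-x}$ in Lemma~\ref{lemkeyred}(ii)).

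The first step is to show $|T|\ge t$ by contradiction, mimicking Lemma~\ref{lemkeyred}(ii). If $|T|<t$, greedily pick $G_1',\dots,G_{s+1}'\in\m T(T)$ with pairwise intersections $<t-|T|$. By the spreadness of $\m T(T)$ and $|G_k'|\le(t+\ell)-|T|$, any previously chosen $G_k'$ excludes a fraction at most $\binom{(t+\ell)-|T|}{t-|T|}\alpha^{-(t-|T|)}\le((\ell+1)/\alpha)^{t-|T|}\le(\ell+1)/\alpha$ of $\m T(T)$; after $\le s$ exclusions the total is $<1$, so a fresh $G_{s+1}'$ is available. The sets $G_i:=T\cup G_i'\in\m T\subset\m S$ then satisfy $|G_i\cap G_j|=|T|+|G_i'\cap G_j'|<t$, contradicting $\nu(\m S,t)\le s$.

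The second step is to show $\nu(\m S',t)\le s$. Suppose for contradiction $A_1,\dots,A_{s+1}\in\m S'$ pairwise $t$-avoid. Since $T$ is the only new set, either all $A_i\in\m S$ (immediate contradiction with the inductive hypothesis) or, up to relabeling, $A_{s+1}=T$ and $A_1,\dots,A_s\in\m S\setminus\m S[T]$. In the latter case it suffices to produce $F\in\m T(T)$ satisfying $|F\cap(A_j\setminus T)|<t-|T\cap A_j|$ for every $j\in[s]$: then $A_{s+1}'':=T\cup F\in\m T\subset\m S$ satisfies $|A_{s+1}''\cap A_j|<t$, and $A_{s+1}''\ne A_j$ since $A_{s+1}''\supset T$ while $A_j\not\supset T$, yielding a forbidden $(s+1)$-tuple already in $\m S$. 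Writing $m_j:=t-|T\cap A_j|\ge 1$, the fraction of $F\in\m T(T)$ failing the constraint for $A_j$ is, by spreadness and $|A_j\setminus T|\le\ell+m_j$, at most $\binom{\ell+m_j}{m_j}\alpha^{-m_j}\le((\ell+1)/\alpha)^{m_j}\le(\ell+1)/\alpha$; summing over $j\in[s]$ and using $\alpha>s(\ell+1)$ produces a total strictly less than $1$, giving the required $F$.

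The point to watch is that both union bounds close against exactly the hypothesis $\alpha>s(\ell+1)$: in the size argument the $s$ terms come from the $s$ already chosen $G_k'$'s, and in the $\nu$ argument from the $s$ competitors $A_j$, and in both places the elementary bound $\binom{\ell+m}{m}\le(\ell+1)^m$ shows the threshold to be tight up to the factor $s$.
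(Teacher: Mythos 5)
Your proof is correct. The second step is the same indirect argument as the paper's: given a bad $(s+1)$-tuple containing $T$, use the spreadness of $\m T(T)$ to replace $T$ by a genuine member $T\cup F\in\m T\subset\m S$ that still avoids all the $A_j$ in $\ge t$ elements, contradicting $\nu(\m S,t)\le s$. The only technical variation is in how the union bound is carried out: the paper first picks explicit sets $A_j'\subset A_j\setminus T$ of size $|A_j|-t+1\le\ell+1$ and removes the $F$ that hit any single element of $\bigcup A_j'$ (so the spreadness is invoked only at $|X|=1$, via $|\bigcup A_j'|\le s(\ell+1)<\alpha$), whereas you bound directly the fraction of $F$ with $|F\cap(A_j\setminus T)|\ge m_j$ via $\binom{\ell+m_j}{m_j}\alpha^{-m_j}\le(\ell+1)/\alpha$. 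Both close against exactly $\alpha>s(\ell+1)$ and are interchangeable. One thing you do that the paper does not is to supply an actual argument for the parenthetical "all sets in $\m S$ are of size $\ge t$" — the paper presents this as if it were an immediate consequence of $\nu(\m S,t)\le s$, which it is not in general, and your Step~1 (a greedy Lemma~\ref{lemkeyred}(ii)-style construction producing $s+1$ sets $T\cup G_i'\in\m T$ with pairwise intersections $<t$) is a genuine patch. Note only that the distinctness of the $G_i'$ in that greedy step needs $|G_k'|\ge t-|T|$, which is exactly where you implicitly use the inductive size hypothesis $|G_k|\ge t$ on the pre-replacement family; that is fine, but worth stating explicitly.
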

\begin{proof}
  By induction, it is sufficient to show this for one step: if  $\nu(\m S,t)\le s$ and there is a set $T$ and a family   $\m T\subset \m S$, such that $\m T(T)$ is $\alpha$-spread for $\alpha>s(\ell+1)$, then $\nu (\m S\setminus \m S[T] \cup \{T\}, t)\le s$. Also note that all sets in $\m S$ have cardinality $\le t+\ell$.

  Arguing indirectly, assume that there are sets $A_1,\ldots, A_s$, such that $T,$ $A_1,\ldots, A_s$ have pairwise intersections strictly smaller than $t$. For each $j\in [s]$, choose $A'_j\subset A_j\setminus T$, such that $|A'_j| = |A_j|-t+1 \le \ell+1$.  Then any $G\in \m T':=\m T[T, T\cup A_1'\cup\ldots\cup A_s']$ intersects each $A_i$, $i\in [s]$, in at most $t-1$ elements. Indeed, $A_i\cap G\subset A_i\setminus A_i'$, and the latter set has size $t-1$. Provided that $\m T'$ is non-empty and we can choose such a $G$, we get a contradiction with the fact that  $\nu(\m S,t)\le s$. Indeed $G,A_1,\ldots, A_s$ belong to $\m S$ and form a $t$-matching of size $s+1$.  However, we have $|A_1'\cup\ldots\cup A_s'| \le s(\ell+1)<\alpha$, and thus, by the $\alpha$-spreadness of $\m T(X)$, \begin{align*}|\T'|\ge&\ |\m T(T)|-\sum_{x\in \cup_{i=1}^s A'_i} |\m T(T\cup \{x\})|\\
\ge&\ |\m T(X)|- \frac{s(m+1)}\alpha |\m T(T)|>0.\end{align*}

\end{proof}

From now on, we assume that $\m S\subset {[n]\choose \le t+\ell}$ has the property that it has no such $>s(\ell+1)$-spread subfamily. Let us denote $\m W_m:=\m S\cap {[n]\choose t+m}$. Using Observation~\ref{obs13}, we see that, for any fixed set $T$, we have
$$|\m W_m(T)|\le (s(\ell+1))^{m-|T|}.$$
The following analysis is needed for $|\m W_m(T)|$.

Let $\m G\subset{[n]\choose t+m}$ be a family with $\nu(\m G,t)\le s$ and no $\alpha$-spread subfamily with $\alpha>s(m+1)$. We are going to bound the size of $\m G$. Take the largest $t$-matching $\m B:=\{B_1,\ldots, B_{s'}\}\subset \m G$, where $s'\le s$. Next, remove the family $\m G'$ of all sets that intersect at least $2$ sets from $\m B$ in $\ge t$ elements. Put $\m G_1:= \m G\setminus (\m G'\cup \m B)$. Note the following key property of $\m G_1$. For each set $A\in \m G_1$, there is exactly one set $B_i\in\m B$ such that $|A\cap B_i|\ge t$. This implies that $\m G_1 = \m P_1\sqcup \ldots \sqcup \m P_{s'}$, such that $\m P_i$ is the family of all sets from $\m G_1$ that intersect $B_i$ in at least $t$ elements.

\begin{obs} For each $j\in[s']$ the family $\m P_j$ is $t$-intersecting.
\end{obs}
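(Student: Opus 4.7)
The plan is to prove this by contradiction, exploiting the maximality of the $t$-matching $\m B$. Suppose there exist $A, A' \in \m P_j$ with $|A \cap A'| < t$. I will show that replacing $B_j$ by $A$ and $A'$ in $\m B$ (i.e., removing $B_j$ and adding both $A$ and $A'$) yields a $t$-matching in $\m G$ strictly larger than $\m B$, contradicting maximality.

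Concretely, I will verify the four types of pairwise intersections in the proposed collection $\{A, A'\} \cup \{B_i : i \in [s'], i \ne j\}$. First, $|A \cap A'| < t$ by assumption. Second, for $i \ne j$, both $A$ and $A'$ lie in $\m P_j \subset \m G_1 = \m G \setminus (\m G' \cup \m B)$; by definition of $\m G_1$, each of $A$ and $A'$ intersects exactly one set of $\m B$ in at least $t$ elements, and that set is $B_j$. Hence $|A \cap B_i| < t$ and $|A' \cap B_i| < t$ for all $i \ne j$. Third, the pairs $|B_i \cap B_{i'}|$ for distinct $i, i' \ne j$ satisfy the inequality because $\m B$ is itself a $t$-matching.

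Thus $\{A, A', B_1, \ldots, B_{j-1}, B_{j+1}, \ldots, B_{s'}\}$ is a $t$-matching in $\m G$ of size $s' + 1$, contradicting the choice of $\m B$ as a largest $t$-matching. (Note the contradiction is with the maximality of $\m B$ among $t$-matchings in $\m G$, not directly with the assumption $\nu(\m G, t) \le s$; maximality is what matters here.) I expect no real obstacle — the only thing to be careful about is ensuring that we did not accidentally use $A$ or $A'$ being equal to some $B_i$, but this is excluded since $A, A' \in \m G_1$ and $\m G_1 \cap \m B = \emptyset$ by definition.
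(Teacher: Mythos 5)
Your proof is correct and takes essentially the same route as the paper: assume $|A\cap A'|<t$ for some $A,A'\in\m P_j$, observe that $A,A',B_1,\ldots,B_{j-1},B_{j+1},\ldots,B_{s'}$ have pairwise intersections $<t$ (using that sets in $\m G_1$ meet only $B_j$ among the $B_i$'s in $\ge t$ elements), and contradict the maximality of $\m B$. The only difference is that you spell out the details more explicitly, including the correct observation that the contradiction is with maximality of $\m B$ rather than directly with $\nu(\m G,t)\le s$.
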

\begin{proof}
  Assume that, say, there are $A, A'\in \m P_1$ such that $|A\cap A'|<t$. Then $A,A',B_2,\ldots, B_{s'}$ have pairwise intersections $<t$, which contradicts the maximality of $\m B$.
\end{proof}

Thus, we get a nice structure in the family $\m G$: it is a union of $s'\le s$ families $\m P_j$ that are each $t$-intersecting, plus the family  $\m G'$, which is hopefully small. Let us bound its size. The family $\m G'$ is contained in the union of the following families, where $A,B\in \m B$:
$$\m G_{A,B}:=\{G\in {[n]\choose k}: |G\cap A|, |G\cap B|\ge t\}.$$
There are ${s'\choose 2}$ families of this form, and, given that $|A\cap B| = t- \beta$, we can bound its size as follows:
$$|\m G_{A,B}|\le \sum_{i=\beta}^m{t-\beta\choose t-i}{m+\beta\choose i}^2 (s(\ell+1))^{m-i},$$
where $\beta \ge 1$.
Let us denote
\begin{equation}\label{eqfi} f_\beta(i):= {t-\beta\choose t-i}{m+\beta\choose i}^2 (s(\ell+1))^{m-i}\end{equation}
and compare $|\mathcal A[\m G_{A,B}]|$ with  $|\m D_{i-\beta}|$ (cf. \eqref{eqak}). We have
$$|\mathcal A[\m G_{A,B}]|\le \sum_{i=1}^m f_\beta(i) {n-t-m\choose k-t-m}$$
and, using Lemma~\ref{lemboundaei},
$$|\mathcal D_{i-\beta}|\ge 0.98{t+2(i-\beta)\choose t+(i-\beta)}{n-t-(i-\beta)\choose k-t-(i-\beta)}.$$
Using that ${t+2(i-\beta)\choose t+(i-\beta)}\ge {t-\beta\choose t-i}$ and ${m+\beta\choose i} = {m+\beta\choose m+\beta-i}\le (m+\beta)^{m+\beta-i}$, we have
\begin{align*}\frac{f_\beta(i) {n-t-m\choose k-t-m}}{|\mathcal D_{i-1}|}&\le \frac{{m+\beta\choose i}^2(s(\ell+1))^{m-i} (k-t)^{m-i+\beta}}{(n-t)^{m-i+\beta}}\\ &\le \frac 1{(s(\ell+1))^\beta}\Big(\frac{e^2 s(\ell+\beta)^3(k-t)}{n-t}\Big)^{m-i+\beta}\\
&\le \frac 1{2(s(\ell+1))}\gamma^{-(m-i+1)},
\end{align*}
where $\gamma>1$, provided \begin{equation}\label{eq432}
n-t>C \gamma s \ell^3(k-t).\end{equation}
Summing over $i$, we get that
$$\frac{|\mathcal A[\m G_{A,B}]|}{\max_i |\m D_i|}\le \sum_{i=1}^m\frac{f_\beta(i) {n-t-m\choose k-t-m}}{|\mathcal D_{i-1}|} \le \frac 1{2s} \gamma^{-1}.$$

Therefore, using Lemma~\ref{lemboundaei}, we have
\begin{equation}\label{eqf1}\frac{|\m A(\m G')|}{\max_i|\m A(\m E_{i})|}<\gamma^{-1}.\end{equation}

We also note that, using virtually identical calculations and for $n-t>C \gamma s \ell^3(k-t)$, we can get for each $i<m$

\begin{equation}\label{eqf0}\frac{s f_0(i){n-t-m\choose k-t-m}}{\max_i|\m A(\m E_{i})|}\le \gamma^{-(m-i)}.
\end{equation}

Our next goal is to relate this to the size of $\W_m$. First, clearly, we may substitute $\m G:=\W_m$ , and get all the conclusions, including the bounds on $|\m G'|$ and the fact that $\m G\setminus \m G'$ is $t$-intersecting.

Let us bound the size of each $\m P_j$. Take two sets $A,B\in \m P_j$ with the smallest intersection, say, $|A\cap B|=t'$. Next, we fix $I\subset A\cap B$, $|I|=t$ and put $D_1 = A\setminus B$, $D_2 = B\setminus A$. Note that $|D_1|,|D_2|\le m$. We group other sets $C$ from $\m P_j$ depending on their intersection with $I$. Note that if $|C\cap I| = t-m$, then $|C\cap (A\cap B)|=|A\cap B|-m$. Since $|C\cap A|, |C\cap B|\ge |A\cap B|$ by the choice of $A,B$, we must have $|C\cap D_1|, |C\cap D_2|\ge m$. Since $|C|=t+m$, no elements are left and thus $C\subset A\cup B$. In the first inequality below we use that $|\m P_j(X)|\le (s(\ell+1))^{t+m-|X|}$ for any set $X$ by our assumption on $\m S$ that it has no $\alpha$-spread subfamilies and Observation~\ref{obs13}. We get that
\begin{align}\notag|\m P_j|&\le \Big|\m P_j\cap {A\cup B\choose t+m}\Big|+\sum_{i=0}^{m-1} {t\choose t-i}{|D_1|\choose i}{|D_2|\choose i}(s(\ell+1))^{m-i}\\
\notag&\le  \Big|\m P_j\cap {A\cup B\choose t+m}\Big|+\sum_{i=0}^{m-1} {t\choose t-i}{m\choose i}^2(s(\ell+1))^{m-i}\\
\label{eqpi} &=  \Big|\m P_j\cap {A\cup B\choose t+m}\Big|+\sum_{i=0}^{m-1} f_0(i).\end{align}

\subsection{Proof of Theorem~\ref{thmfinegrained}} We prove the statement by induction on $s$, in which we use both the statement of Theorem~\ref{thmfinegrained} for $s-1$ and also\footnote{The latter is for convenience only. It is possible to avoid, but then some extra calculations will be required.}  the statement of Theorem~\ref{thmmain} for $s-1$. The proof of Theorem~\ref{thmmain} for $s-1$ only requires Theorem~\ref{thmfinegrained} for $s-1$ and, moreover, the assumed inequalities on the parameters, if valid for $s$, stay valid for $s-1$. Thus, this creates no problem.

Let $m\in [\ell]$ be the largest $i$ such that $|\ff[\m W_i]|\ge \frac 1{20\ell}|\ff[\m S]|$. Since $\m S\subset \sqcup_{i=0}^\ell \m W_i$, such $m$ clearly exists. 

The analysis from the previous section makes it possible for us to compare $\max_i|\m A(\m E_i)|$ with $|\ff(\m W_m)|$. We use the decomposition of $\m W_m=\m G' \sqcup \sqcup_{j=1}^{s'}\m P_j$ as in the previous section and the fact that, for any family $\m Y\subset {[n]\choose t+m}$, we have $|\ff(\m Y)|\le |\m A(\m Y)|\le |\m Y|{n-t-m\choose k-t-m}$. We shall be treating a part of the families $\m P_j$ separately. Let us put $\m P_j':=\m P_j\cap {A\cup B\choose t+m}$, where $A$ and $B$ are as in the previous section (cf. \eqref{eqpi}). We get the following chain of inequalities.
\begin{align}\notag|\ff(\m W_m)|\le& \sum_{j=1}^{s'}|\ff(\m P_j')|+\Big|\m G' \sqcup \bigsqcup_{j=1}^{s'}\m P_j\setminus \m P_j'\Big|{n-t-m\choose k-t-m} \\
\notag \overset{\eqref{eqf1},\eqref{eqpi}}{\le}& \sum_{j=1}^{s'}|\ff(\m P_j')|+\gamma^{-1}\max_i|\m A(\m E_{i})|+s\sum_{i=0}^{m-1} f_0(i){n-t-m\choose k-t-m} \\
\notag\overset{\eqref{eqf0}}{\le}& \sum_{j=1}^{s'}|\ff(\m P_j')|+\gamma^{-1}\max_i|\m A(\m E_{i})|+\max_i|\m A(\m E_{i})|\sum_{i=0}^{m-1}\gamma^{i-m}\\
\label{eq117}\le& \sum_{j=1}^{s'}|\ff(\m P_j')|+\frac 1{100\ell}\max_i|\m A(\m E_{i})|,
 \end{align}
Where in the last inequality we chose $\gamma = 300\ell$, and we require $n>Cs \ell^4 (k-t)$ (cf. \eqref{eq432}).
Given that $|\ff[\m W_m]|\ge \frac 1{20\ell}|\ff[\m S]|$ and $$|\ff[\m S]|\ge  h(n,k,s,t)-(\frac 12+\frac 1{10s})h(n,k,1,t)\ge \frac 25h(n,k,s,t)\ge  \frac 25\max_i|\m A(\m E_{i})|,$$ we get that $|\ff(\m W_m)|\ge \frac 1{50\ell}\max_i|\m A(\m E_{i})|$.  Combining this with \eqref{eq117} and Lemma~\ref{lemboundaei}, we conclude that
\begin{multline*}s\max_j|\ff(\m P_j')| \ge \frac 1{100\ell}\max_i|\m A(\m E_{i})|\\
\ge \frac 1{100\ell}|\m A(\m E_{m})|\ge \frac s{125\ell}{t+2m\choose m}{n-t-m\choose k-t-m}.\end{multline*}
Assuming that $\max_j|\ff(\m P_j')| = |\ff(\m P_1')|$ and that $\m P_1'\subset {A\cup B\choose t+m}$, we get that
\begin{equation}\label{eqab}\big|\m W_m\cap {A\cup B\choose t+m}\big|\ge \frac 1{125 \ell}{t+2m\choose m}.\end{equation}
Given our bound on $\ell$, we in particular see that $|A\cup B| = t+2m$. It was a theoretical possibility that this union is smaller, but then ${A\cup B\choose t+m}\le {t+2m-1\choose t+m}=\frac{m}{t+2m}{t+2m\choose t+m},$ which contradicts \eqref{eqab} since $t> 200\ell m$. Put $\m C_1:= {A\cup B\choose t+m}$ and note that $\m C_1$ satisfies $\nu(\m C_1,t)=1$. Summarizing, we see that $\W_m$ is rather dense in $\m C_1$. We capitalize on this in the following lemma.

\begin{lem}\label{leminduction}
  Put $\m S^{(1)}:=\m S\setminus \m S[\m C_1]$. We have $\nu(\m S^{(1)},t)\le s-1$.
\end{lem}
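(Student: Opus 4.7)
The strategy is proof by contradiction. Assume $\nu(\m S^{(1)},t)\ge s$ and fix witnesses $S_1,\ldots,S_s\in \m S^{(1)}$ with all pairwise intersections of size less than $t$. I plan to find a set $C\in \W_m\cap \m C_1\subset \m S$ with $|C\cap S_i|<t$ for every $i\in[s]$; together with the $S_i$, such a $C$ produces $s+1$ sets in $\m S$ all pairwise intersecting in fewer than $t$ elements, contradicting $\nu(\m S,t)\le s$.

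Two observations make this feasible. First, because $\m C_1=\binom{A\cup B}{t+m}$ with $|A\cup B|=t+2m$, the condition $S_i\notin \m S[\m C_1]$ forces $|S_i\cap(A\cup B)|\le t+m-1$; setting $Y_i:=S_i\cap(A\cup B)$, for any $C\in \m C_1$ we have $|C\cap S_i|=|C\cap Y_i|$, so I only need to avoid those $C$ meeting $Y_i$ in at least $t$ elements. Second, the density bound $|\W_m\cap \m C_1|\ge \tfrac{1}{125\ell}\binom{t+2m}{m}$ established just above the lemma supplies a large pool of candidates in $\m S$ from which to choose.

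The key counting step bounds, for each fixed $i$, the number of bad $C$: this is at most the number of $(t+m)$-subsets of a $(t+2m)$-set meeting a fixed $(t+m-1)$-set in at least $t$ elements, i.e.\ $\sum_{a=t}^{t+m-1}\binom{t+m-1}{a}\binom{m+1}{t+m-a}$. Computing the ratio of consecutive terms, the sum is dominated by its last term once $t\gg m^2$, so under the hypothesis $t\ge 400\ell^3 s$ it is at most, say, $2(m+1)\binom{t+m-1}{m-1}$. Summing over $i\in[s]$ and using the elementary telescoping estimate $\binom{t+m-1}{m-1}\le \tfrac{m}{t+m}\binom{t+2m}{m}$ (obtained by writing the ratio of the two binomials as a product $\prod_{i=1}^{m}\tfrac{t+i}{t+m+i}\le 1$), the total number of bad $C$ is at most $O\!\bigl(\tfrac{sm(m+1)}{t+m}\bigr)\binom{t+2m}{m}$. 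With $m\le \ell$ and $t\ge 400\ell^3 s$, this is strictly smaller than $(125\ell)^{-1}\binom{t+2m}{m}\le |\W_m\cap \m C_1|$, yielding the required good $C$ and hence the contradiction.

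The main obstacle is squeezing the constants in the counting: the bound on the number of bad $C$ must sit comfortably below $|\W_m\cap \m C_1|$. The geometric decay of consecutive terms in the binomial sum above, which uses $t\gg m^2$ (and hence exactly the cubic-in-$\ell$ factor in the hypothesis $t\ge 400\ell^3 s$), is what makes the estimate work; everything else is routine.
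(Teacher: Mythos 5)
Your strategy (find $C\in\W_m\cap\m C_1$ with $|C\cap S_i|<t$ for all $i$, by counting the ``bad'' sets in $\binom{A\cup B}{t+m}$) is essentially the same as the paper's, and the logical skeleton is sound. In particular, the reduction $|C\cap S_i|=|C\cap Y_i|$ for $C\subset A\cup B$ and $|Y_i|\le t+m-1$ (so that an index $i$ with $|Y_i|<t$ is automatically harmless, absorbing the paper's explicit case split into $x_i\le\ell$ and $x_i>\ell$) is correct, and the telescoping bound $\binom{t+m-1}{m-1}\le\frac{m}{t+m}\binom{t+2m}{m}$ is fine.

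The one soft spot is the constant. If you actually use your stated bound $2(m+1)\binom{t+m-1}{m-1}$ per index, summing over $i\in[s]$ yields at most $\frac{2sm(m+1)}{t+m}\binom{t+2m}{m}$, and with $m\le\ell$, $t\ge 400\ell^3 s$ this is only $\le\frac{1}{100\ell}\binom{t+2m}{m}$, which is \emph{not} below the threshold $\frac{1}{125\ell}\binom{t+2m}{m}$ from \eqref{eqab}. The fix is already implicit in your ratio computation: the consecutive-term ratio is at most $\frac{m^2}{2t}\le\frac{1}{800\ell s}$, so the geometric sum is at most $(1+o(1))$ times the dominant term, not $2$ times; with any factor $\le 1.5$ you get total bad $\le\frac{3s\ell^2}{t}\binom{t+2m}{m}<\frac{1}{125\ell}\binom{t+2m}{m}$. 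So state and use the sharper $(1+o(1))(m+1)\binom{t+m-1}{m-1}$ bound; as written, the inequality is the wrong way round by a factor of about $1.25$.

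For comparison, the paper counts the complementary events: for each relevant $i$ it shows that a $\bigl(1-\Theta(\frac{1}{\ell s})\bigr)$-fraction of $\binom{A\cup B}{t+m}$ contains $Z_i=(A\cup B)\setminus A_i$, which is a strictly stronger (hence easier to count) sufficient condition for $|C\cap A_i|<t$. A union bound over $i$ then leaves a $\bigl(1-\Theta(\frac1\ell)\bigr)$-fraction of candidate sets, comfortably above $\frac{1}{125\ell}$. Your direct count of bad sets is, in principle, at least as tight, but needs the constants handled with care to beat the same threshold.
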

\begin{proof}
  Assume that there are sets $A_1,\ldots, A_s\in \m S^{(1)}$ such that $|A_i\cap A_j|<t$ and,
  moreover, $|A_i\cap (A\cup B)|=m+t-x_i,$ $x_i>0$. To arrive at a contradiction, we just need to find
   a set $X\in\m W_m\cap \m C_1$ that intersects $A_i$ in fewer than $t$ elements.

Recall that sets in $\m S$ have size at most $t+\ell$. W.l.o.g., assume that $x_1,\ldots, x_{s'}\le \ell$ and $x_{s'+1},\ldots, x_{s}> \ell$.  Note that, no matter what set $C\in \m S[\m C_1]$ we take, we have $|C\cap A_i|<t$ for each $i>s'$. Indeed, $|C\cap A_i|\le |(A\cup B)\cap A_i|+|C\setminus (A\cup B)|<t+m-\ell +(\ell-m)=t$.   Thus, we only need to concentrate on sets $A_i, i\in [s']$.

  For each $i\in [s']$ we have
   $$\frac{{t+2m-x_i\choose t+m-x_i}}{{t+2m\choose t+m}}\ge\frac{{t+2m-\ell\choose t+m-\ell}}{{t+2m\choose t+m}}\ge \Big(\frac{t+m-\ell}{t+2m-\ell}\Big)^{\ell}\ge 1-\frac{m\ell}{t-\ell}\ge 1-\frac 1{300\ell s},$$
  since $t\ge 400 \ell^3 s$ by assumption.

  For each $A_i, i\in [s'],$ consider the set $Z_i:=(A\cup B)\setminus A_i$ and note that, by the above calculation, at least a $(1-\frac 1{300\ell s})$-proportion of all sets in ${A\cup B\choose t+m}$ contain $Z_i$. By the union bound, at least $1-\frac 1{300\ell}$ fraction of sets in  ${A\cup B\choose t+m}$ contain $Z_i$ for each $i\in [s']$. Note that any such set intersects $A_i$, $i\in[s']$, in $t-x_i$ elements.

  Combining the above with \eqref{eqab}, we see that there is a set $X\in \m W_m$ that intersects $A_i$, $i\in [s']$, in $t-x_i$ elements. We have $X\in \m S$ and we are done: together with $A_1,\ldots, A_s$ it forms a $t$-matching of size $s+1$.
    \end{proof}

Now, we are almost in position to apply induction to the family $\m S^{(1)}$. We have $\nu(\m S^{(1)},t)\le s-1$ by Lemma~\ref{leminduction}. The only thing that we need to check is that  $|\ff[\m S^{(1)}]|\ge h(n,k,s-1,t)-\big(\frac 12 +\frac 1{10(s-1)}\big)h(n,k,1,t)$. This is a bit technical, albeit simple, and is done in  Lemma~\ref{lemboundh2} part 2 below. This concludes the proof of the theorem.

The following is  an immediate consequence of Theorem~\ref{thmmain} for $s-1$ (which, as we mentioned in the beginning of this proof, we use in the inductive step for convenience).
\begin{obs}\label{lemboundh}
  In our assumptions on $n,k,t,s$, we have $h(n,k,s-1,t)\le (s-1) h(n,k,1,t)$.
\end{obs}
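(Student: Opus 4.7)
My plan is a direct union bound on the construction defining $h$, with Theorem~\ref{thmmain} applied at $s-1$ entering only in a conceptual role. Unfolding the definition just after Lemma~\ref{lem22}, $h(n,k,s-1,t)$ is the maximum of $|\m A[\m K]|$ over $\m K=\bigcup_{i=1}^{s-1}\binom{Y_i}{t+x_i}$, where the $Y_i$ are pairwise disjoint with $|Y_i|=t+2x_i$ and $0\le x_i\le k-t$. Fix a choice attaining this maximum and set $\m C_i:=\binom{Y_i}{t+x_i}$. From the very definition of the operator $\m A[\cdot]$ one has the elementary identity $\m A[\bigcup_i\m C_i]=\bigcup_i\m A[\m C_i]$, whence
\[
h(n,k,s-1,t)\;=\;\Bigl|\bigcup_{i=1}^{s-1}\m A[\m C_i]\Bigr|\;\le\;\sum_{i=1}^{s-1}|\m A[\m C_i]|
\]
by the union bound.

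To finish, I would observe that each summand is the size of the extension of a single $t$-intersecting clique: $\m A[\m C_i]=\{F\in\binom{[n]}{k}:|F\cap Y_i|\ge t+x_i\}$, which is exactly the family arising in Construction~\ref{const2} with $s=1$. Hence $|\m A[\m C_i]|\le h(n,k,1,t)$ by the definition of $h(n,k,1,t)$ as a maximum over such single-clique configurations, and summing the $s-1$ inequalities yields the desired $h(n,k,s-1,t)\le(s-1)\,h(n,k,1,t)$. The reference to Theorem~\ref{thmmain} at $s-1$ is conceptual only: it certifies that $h(n,k,s-1,t)$ is the genuine extremal size for families with $\nu(\cdot,t)\le s-1$, keeping the inductive bookkeeping of Section~\ref{sec4} self-consistent. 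I foresee no obstacle here---the argument is essentially a one-line union bound, and the hypothesized bounds on $n,k,t,s$ are used only to the extent that they remain valid when $s$ is replaced by $s-1$.
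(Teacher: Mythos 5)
Your argument is correct and proceeds along essentially the same lines as the paper's. Both exploit that $h(n,k,s-1,t)$ is realized by $\m A[\m K]$ with $\m K=\bigcup_{i=1}^{s-1}\m C_i$ a union of $s-1$ cliques, that $\m A[\m K]=\bigcup_i\m A[\m C_i]$, that each $\m A[\m C_i]$ is a single-clique construction with $|\m A[\m C_i]|\le h(n,k,1,t)$, and then apply the union bound. You are also right that the invocation of Theorem~\ref{thmmain} for $s-1$ is inessential here: the definition of $h$ already guarantees the clique decomposition of a maximizer, so the observation holds without any hypotheses on $n,k,t,s$ beyond $h$ being defined.
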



The following lemma provides us with lower bounds for $h(n,k,s,t)$ as well.

\begin{lem}\label{lemboundh2}
Let $t\ge 8$ and $n\ge k+C(st)^{1/2}(k-t)$.

1.  We have $$h(n,k,s,t)\ge h(n,k,s-1,t)+\Big(1-\frac{1}{10s^3}\Big)h(n,k,1,t).$$ 

2. If  the families $\m U_i$ satisfy $\nu(\m U_1,t)= s-1$ and $\nu(\m U_2,t)= 1$ and, moreover, $|\m U_1\cup \m U_2|\ge h(n,k,s,t)-(\frac 12+\frac 1{10s}) h(n,k,1,t)$ then $|\m U_1|\ge h(n,k,s-1,t)-(\frac 12+\frac 1{10(s-1)})h(n,k,1,t)$.
\end{lem}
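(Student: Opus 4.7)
The two parts decouple cleanly: Part~1 does the constructive work, and Part~2 is a short deduction from Part~1. For Part~1, the plan is to exhibit an explicit Construction~\ref{const2}-type lower bound. Take optimal parameters $x_1^*,\ldots,x_{s-1}^*$ achieving $h(n,k,s-1,t)$, realized by pairwise disjoint sets $Y_1,\ldots,Y_{s-1}$ of sizes $t+2x_i^*$, and separately let $x^*$ maximize $|\m D_x|$ so that $|\m D_{x^*}|=h(n,k,1,t)$. Pick a $(t+2x^*)$-set $Y_s$ disjoint from $\bigcup_{i<s} Y_i$; under $n\ge k+C(st)^{1/2}(k-t)$ such a $Y_s$ exists, possibly after harmlessly shrinking one of the existing cliques to free room. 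Appending $\m C_s={Y_s\choose t+x^*}$ produces a valid union of $s$ cliques, so by inclusion--exclusion
$$h(n,k,s,t)\ \ge\ h(n,k,s-1,t)+h(n,k,1,t)-\sum_{i=1}^{s-1}\bigl|\m A[\m C_i]\cap\m A[\m C_s]\bigr|.$$

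The crux of Part~1 is showing that the pairwise overlaps are negligible. Since $Y_i$ and $Y_s$ are disjoint, any set counted in $\m A[\m C_i]\cap\m A[\m C_s]$ contains disjoint subsets of sizes $t+x_i^*$ and $t+x^*$, so
$$\bigl|\m A[\m C_i]\cap\m A[\m C_s]\bigr|\le {t+2x_i^*\choose t+x_i^*}{t+2x^*\choose t+x^*}{n-2t-x_i^*-x^*\choose k-2t-x_i^*-x^*}.$$
Dividing by the lower bound $|\m D_{x^*}|\ge{t+2x^*\choose t+x^*}{n-t-2x^*\choose k-t-x^*}$ and bounding the resulting binomial ratio by a product of factors of order at most $(k-t)/(n-k)\le 1/(C(st)^{1/2})$, each summand becomes a small power of $(st)^{-1/2}$ times $h(n,k,1,t)$. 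For $t\ge 8$ and $C$ large enough, the sum over $i=1,\ldots,s-1$ is bounded by $\frac{1}{10s^3}h(n,k,1,t)$, which is precisely the slack required in Part~1.

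Part~2 is a soft deduction from Part~1. Since $\nu(\m U_2,t)=1$, the family $\m U_2$ is $t$-intersecting, and the Ahlswede--Khachatrian theorem gives $|\m U_2|\le h(n,k,1,t)$. Combined with the union bound and the hypothesis on $|\m U_1\cup\m U_2|$,
$$|\m U_1|\ge |\m U_1\cup\m U_2|-|\m U_2|\ge h(n,k,s,t)-\Bigl(\tfrac{3}{2}+\tfrac{1}{10s}\Bigr)h(n,k,1,t).$$
Plugging in the lower bound on $h(n,k,s,t)$ from Part~1 yields $|\m U_1|\ge h(n,k,s-1,t)-\bigl(\tfrac{1}{2}+\tfrac{1}{10s}+\tfrac{1}{10s^3}\bigr)h(n,k,1,t)$, and the elementary inequality $\tfrac{1}{s}+\tfrac{1}{s^3}\le\tfrac{1}{s-1}$ (equivalent to $s-1\le s^2$, true for every $s\ge 1$) converts this into the desired bound.

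The main technical hurdle is the overlap estimate in Part~1: the binomial ${t+2x_i^*\choose t+x_i^*}$ can be large when $x_i^*$ is large, and the geometric suppression from $(k-t)/(n-k)$ raised to the $(t+x_i^*)$-th power must dominate it uniformly across $x_i^*\in\{0,\ldots,k-t\}$. This is exactly the pressure that forces the assumption $n\ge k+C(st)^{1/2}(k-t)$ with $C$ large together with $t\ge 8$; the remaining housekeeping is the easy edge case where an additional $t$-set might not literally fit disjointly into $[n]$, which is handled by a mild deformation of the initial optimal configuration.
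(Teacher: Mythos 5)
Your Part~2 is essentially identical to the paper's, so there is nothing to say there. Part~1, however, takes a genuinely different route, and it is worth comparing.

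The paper does not build an explicit disjoint configuration and run inclusion--exclusion. Instead it fixes an optimal family $\m G$ with $\nu(\m G,t)=s-1$, $|\m G|=h(n,k,s-1,t)$, and the largest $t$-intersecting family $\m G'$, and averages over a uniformly random permutation $\sigma$: one has $\E\bigl[|\sigma(\m G')\cap\m G|\bigr]=|\m G||\m G'|/\binom{n}{k}$, which is then bounded via $h(n,k,s-1,t)\le (s-1)h(n,k,1,t)$ and the universal bound $h(n,k,1,t)\le\binom{n-1}{k-t}$ from \cite{F2020}. Picking a permutation achieving at most the expectation and taking $\sigma(\m G')\cup\m G$ gives the required lower bound on $h(n,k,s,t)$. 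The advantage of the probabilistic averaging is that the overlap estimate is a single clean ratio $|\m G||\m G'|/\binom{n}{k}$; one never has to know which clique parameters $x_i^*$ are optimal, nor to control binomial ratios of the form $\binom{t+2x_i^*}{t+x_i^*}\cdot\binom{n-2t-x_i^*-x^*}{k-2t-x_i^*-x^*}/\binom{n-t-2x^*}{k-t-x^*}$ uniformly in $x_i^*$ as you must. Your computation does go through with the assumption $n\ge k+C(st)^{1/2}(k-t)$ and $t\ge 8$, so that portion of the argument is fine, just more work.

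The real soft spot in your proposal is the feasibility of the disjoint placement. You need $n\ge\sum_{i<s}(t+2x_i^*)+(t+2x^*)$, and your remark that one can ``harmlessly shrink one of the existing cliques'' is not an argument: shrinking a clique changes $|\m A[\m K]|$ and can destroy the equality $|\cup_{i<s}\m A[\m C_i]|=h(n,k,s-1,t)$ that your inclusion--exclusion starts from, so the word ``harmless'' needs justification you do not supply. The paper's random-permutation version does not sidestep the same underlying issue (it also ultimately appeals to a Construction~\ref{const2}-type configuration via Lemma~\ref{lem22}, which needs room for $s$ disjoint supports), but it does not introduce an unexplained deformation step either. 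So: the route is different and conceptually sound, but you should replace the shrinking hand-wave with either (a) a check that in the regime where this lemma is invoked $n$ is large enough for the placement to exist, or (b) the paper's averaging trick, which packages the overlap bound more cleanly.
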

\begin{proof}
1. Recall that a family $\m G$ is $t$-intersecting iff $\nu(\m G,t)=1$.
It is a result of the first author \cite{F2020} that $h(n,k,1,t)\le {n-1\choose k-t}$, and thus, using Observation~\ref{lemboundh}, we have $h(n,k,s-1,t)\le (s-1){n-1\choose k-t}$. Consider a family $\m G$ with $\nu(\m G,t)=s-1$ and $|\m G| = h(n,k,s-1,t)$. Consider also the largest $t$-intersecting family $\m G'\subset {[n]\choose k}$. Next, take a random permutation $\sigma$. Then
$$\E[|\sigma(\m G')\cap \m G|] = \frac{|\m G||\m G'|}{{n\choose k}}\le \frac {(s-1){n-1\choose k-t}}{{n\choose k}}|\m G'|\le \frac {s(k-t+7)^{8}}{(n-k)^{8}}|\m G'| \le \frac{1}{10s^3}|\m G'|,$$
where the last inequality is due to our assumption on $n$. Take a permutation $\sigma$ for which the size of the intersection is at most its expectation. Note that $\nu(\sigma(\m G')\cup \m G,t)\le s$ and, therefore,
$$h(n,k,s,t)\ge |\sigma(\m G')\cup \m G|\ge h(n,k,s-1,t)+\Big(1-\frac{1}{10s^3}\Big)h(n,k,1,t).$$


2. In order to obtain the bound, we simply substitute the bound from part 1 of the lemma in the following chain of inequalities:
\begin{align*} |\m U_1|\ge& |\m U_1\cup \m U_2|-|\m U_2|\ge |\m U_1\cup \m U_2|-h(n,k,1,t)\\
\ge& h(n,k,s,t)-\Big(\frac 32+\frac 1{10s}\Big)h(n,k,1,t)\\ \ge& h(n,k,s-1,t)-\Big(\frac 12+\frac 1{10s}+\frac{1}{10s^3}\Big)h(n,k,1,t)\\
\ge& h(n,k,s,t)-\Big(\frac 12+\frac 1{10(s-1)}\Big)h(n,k,1,t).\end{align*}
 \end{proof}

\subsection{Proof of Theorem~\ref{thmfinegrained2}}
We use a similar, albeit simpler, proof strategy. We have $n\ge C\ell^4 s (k-t)$ and may use the same analysis as in the proof of Theorem~\ref{thmfinegrained} up to \eqref{eq117}. Next, we want to show that $m=0$ in \eqref{eq117}. Arguing indirectly, if $m\ge 1$, we get that
$$\frac{s\max_j|\ff(\m P_j')|}{|\m A(\m E_0)|}\le 2\frac{{t+2m\choose m}{n-t-m\choose k-t-m}}{{n-t\choose k-t}}\le 2\Big(\frac{(t+2m)(k-t)}{n-t}\Big)^m\le \frac 1{1000\ell},$$
since $n-t\ge C(t+m)\ell(k-t)$ in our assumptions. This, together with \eqref{eq117}  contradicts the inequality $|\ff(\m W_m)|\ge \frac 1{50\ell}\max_i|\m A(\m E_{i})|$.

Concluding, we get that $m=0$ and, using the same notation as in the proof of Theorem~\ref{thmfinegrained}, we see that $\m S^{(1)} = \{T_1\}$ for some $t$-element set $T$. In this case, we immediately get that for $\m S^{(1)}:=\m S\setminus \m S[T_1]=\m S\setminus \{T\}$ we have
$\nu(\m S^{(1)})\le s-1$ and apply induction.

\section{Proof of the main theorem}\label{secmain}
The first step of the proof is a combination of the application of Theorem~\ref{thmapprox1} and Theorems~\ref{thmfinegrained},~\ref{thmfinegrained2}. Slightly abusing notation, we use $C$ below for different large enough constants that we can choose (culminating with the choice of the constant $C$ in the condition on $n$ in Theorem~\ref{thmmain}).

Assume first that $t = s^{y}$, $y>1$ and that $t\ge C s\log^3_2(st)$ for a sufficiently large $C$. Apply Theorem~\ref{thmapprox1} with $\beta = 0.5$ and $\alpha = \frac 45 -\frac 3{10y}$, and $\sigma = 100(t/s)^{1/2}$. The parameters are chosen so that $t^\alpha s^{\beta} = t^{4/5}s^{1/5}$. Note that the constraints on $n$ from Theorem~\ref{thmapprox1} are satisfied (the bound $n\ge k+C(k-t)s(\sigma+\log_2 n) = k+C(k-t)(st)^{1/2}+C(k-t)s\log_2 n $ is implied by our condition on $n$). We get a family $\m S$ with $\nu(\m S,t)\le s$ that approximates $\m F$ and that has sets of size at most $t+\ell$, where $\ell \le  t^{1-\alpha} s^{-\beta}+400\log(st) = (t/s)^{1/5}+400\log(st).$ We also get a remainder $\m R$ of size at most $2^{-90(st)^{1/2}}{n-t\choose k-t}$.

Next, we apply Theorem~\ref{thmfinegrained} to $\m S$. Note that the condition $n\ge Cs\ell^4(k-t)$ is implied by $n\ge Cs(t/s)^{4/5}(k-t)=Ct^{4/5}s^{1/5}(k-t)$ and $n\ge Cs(k-t)\log^4 n$, which we assumed. The condition $t\ge 400\ell^3s$ holds because we assumed that $t\ge C s\log^3(st)$. Since $\ff$ is extremal and $|\m R| = |\ff\setminus \ff[\m S]|\le 0.01 h(n,k,1,t)$, the condition on $|\ff[\m S]|$ from Theorem~\ref{thmfinegrained} is satisfied. We conclude that there are sets $X_1,\ldots, X_s$ of size $t+2x_1,\ldots, t+2x_s,$ with $0\le x_i\le \ell$ for all $i\in [s]$, such that,
    putting $\m C_i:={X_i\choose \ell+x_i}$, we have $\m S = \m S[\cup_{i\in[s]} \m C_i]$.

That is, in order to complete the proof of the theorem in this case, we are only left to show that $\m R=\emptyset$. But let us first deal with the remaining values of the parameters.

Assume that $t\le C s\log^3_2(st)$.  Apply Theorem~\ref{thmapprox1} with $\beta = 0.5$ and $\alpha = 0.5$ if $t\le s$ and $\alpha=1-0.5\log_t s$ otherwise, and $\sigma = 100\log_2(st)$. This way, we have $t^{1-\alpha}s^{-\beta} = 1$. Note that $t^{\alpha}s^{\beta}\le \max\{s,t^{4/5}s^{1/5}\}$, 
and thus the constraint on $n$ from Theorem~\ref{thmmain} implies $n\ge t+C t^{\alpha}s^{\beta}(k-t)\log_2\frac{n-t}{k-t}$. The other constraint is also satisfied.
We get a family $\m S$ with $\nu(\m S,t)\le s$ that approximates $\m F$ and that has sets of size at most $t+\ell$, where $\ell \le  400\log_2(st)$. We also get a remainder $\m R$ of size at most $(st)^{-90}{n-t\choose k-t}$.

Next, we apply Theorem~\ref{thmfinegrained2} to $\m S$. Note that the condition $n\ge Cs\ell^4(k-t)$ is implied by $n\ge Cs(k-t)\log^4n$, which we assumed. The condition $n-t\ge C(t+\ell)\ell(k-t)$ holds because $(t+\ell)\ell(k-t)\le Cs \ell^4(k-t)$. The condition $n\ge k+C(st)^{1/2}(k-t)$ is also valid. Again, since $\ff$ is extremal and $|\m R| = |\ff\setminus \ff[\m S]|\le 0.01 h(n,k,1,t)$, the condition on $|\ff[\m S]|$ from Theorem~\ref{thmfinegrained2} is also satisfied. We conclude that there exist sets $T_1,\ldots, T_s$ of size $t$, such that $\m S = \m S[\{T_1,\ldots, T_s\}]$.

In what follows, we shall argue that the remainder $\m R$ must be empty in order for $\ff$ to be extremal. Let us again first deal with the harder case of $t\ge Cs\log^3(st)$.

Find an inclusion-maximal set $Y$ such that $\m R(Y)$ is $r$-spread for $r = \frac {n-t}{2(k-t)}$.  Using the by now standard calculations as in the proof of Theorem~\ref{thmregularity}, one can see that $|\m R|\le r^{|Y|}{n-|Y|\choose k-|Y|}$. We shall use this bound for $|Y|> 2t\log_2n$. In this regime it implies
\begin{align}\notag|\m R|&\le r^{|Y|}{n-|Y|\choose k-|Y|} \le \Big(\frac {n-t}{2(k-t)}\Big)^{|Y|}\Big(\frac{k-t}{n-t}\Big)^{|Y|-t}{n-t\choose k-t}\\
\label{eq235}&=2^{-|Y|)}\Big(\frac{n-t}{k-t}\Big)^{t}{n-t\choose k-t}\ge 2^{-(|Y|-2t\log_2 n)}n^{-t}{n-t\choose k-t}.\end{align}

 As long as $|Y|\le 2t\log_2 n$, it is sufficient for us to use the bound $|\m R|\le 2^{-90(st)^{1/2}}{n-t\choose k-t}$, guaranteed by the application of Thm~\ref{thmapprox1} above.

We may of course w.l.o.g. assume that $\m R\cap  \bigcup_{i=1}^s\m A[\m C_i]=\emptyset$, and thus $|Y\cap X_i|\le \ell+x_i-1$ for each $i\in [s]$. Fix some set $Y_i, Y\cap X_i\subset Y_i\subset X_i$, so that $|Y_i| = \ell+x_i-1$. Denote $Z_i:=X_i\setminus Y_i$. Consider the following families
$$\m P_i:=\Big\{F\in \aaa[\m C_i]: Z_i\subset F, |F\cap X_i|= t+x_i, F\cap Y\subset Y_i\Big\}.$$
Note that, first, for any $P\in \m P_i$ we have $|P\cap Y|\le t-1$ and, second,
\begin{align}\notag|\m P_i|&\ge {t+x_i-1\choose t-1}{n-t-2x_i-|Y|\choose k-t-x_i}\\
\notag&\ge \Big(\frac{t}{t+x_i}\Big)^{x_i+1}{t+2x_i\choose t+x_i}\Big(\frac{n-k-x_i-|Y|}{n-k-x_i}\Big)^{k-t-x_i}{n-t-2x_i\choose k-t-x_i}\\
\label{eq227}&\ge e^{-\frac{x_i(x_i+1)}t}{t+2x_i\choose t+x_i}e^{-\frac{|Y|(k-t)}{n-k-x_i-|Y|}}{n-t-2x_i\choose k-t-x_i}\\
\label{eq228} &\ge \frac 12 e^{-\frac{|Y|}{C(st)^{1/2}\log_2 n}}{t+2x_i\choose t+x_i}{n-t-2x_i\choose k-t-x_i}\\
\label{eq229} &\ge \frac 13 e^{-\frac{|Y|}{C(st)^{1/2}\log_2 n}}{t+2x_i\choose t+x_i}{n-t-x_i\choose k-t-x_i},\\
\label{eq230} &\ge \frac 14 e^{-\frac{|Y|}{C(st)^{1/2}\log_2 n}}{n-t\choose k-t},
\end{align}
where in \eqref{eq228} we used that $t\ge 2\ell^2\ge 4x_i^2$ and our assumption on $n$, and in \eqref{eq229} we used the calculations as in Lemma~\ref{lemboundaei}. In \eqref{eq230} we used that $X_i$ is such that the cardinality of $\aaa[X_i] = \m D_i$ is within $0.99$ multiplicative factor of $\max_i |\m D_i|\ge |\m D_0| = {n-t\choose k-t}$. Otherwise, $|\m F|$ is not maximal because of Lemma~\ref{lemboundh2} part 1 (and the bound on the remainder $|\m R|$).

Note that if $|Y|\le 2t\log_2 n$ the expression in \eqref{eq230} is at least
$$\frac 14 e^{-\frac{2t\log_2 n}{C(st)^{1/2}\log_2 n}}{n-t\choose k-t}\ge \frac 14 e^{-(t/s)^{1/2}}{n-t\choose k-t}.$$
An important thing  to keep in mind is that this is much more than the size of $|\m R|$, which is at most $2^{-90(st)^{1/2}}{n-t\choose k-t}.$

Similarly, if $|Y|\ge 2\log_2 n$ then  the expression in \eqref{eq230} is at least
\begin{multline*}\frac 14 e^{-\frac{2t\log_2 n}{C(st)^{1/2}\log_2 n}}e^{-\frac{|Y|-(2t\log_2 n)}{C(st)^{1/2}\log_2 n}}{n-t\choose k-t}\\ \ge \frac 14 e^{-(t/s)^{1/2}}{n-t\choose k-t} e^{-\frac{|Y|-2t\log_2 n}{100000}}{n-t\choose k-t},\end{multline*}
which is much more than the size of $\m R$, since we have $$|\m R|\le n^{-t}2^{-|Y|-(t+2t\log_2 n)}{n-t\choose k-t}.$$

In either case, we can see that
\begin{equation}\label{eqratiorem} \frac{|\m P_i|}{|\m R|}\ge 2^{80(st)^{1/2}}.\end{equation}

This implies several things. First, consider a set $Q\in {X_i\choose t+x_i}$ and note that $\m P_i = \sqcup \aaa[Q, X_i\cup Y]$, where the union is taken over ${t+x_i-1\choose t-1}$ sets $Q\in {X_i\choose t+x_i}$ that contain $Z_i$. The family $\ff$ is extremal, so, in view of \eqref{eqratiorem}, most of the sets from $\aaa[\m P_i]$ must also lie in $\ff[\m P_i]$. In particular,  for all but at most $0.01$ fraction of sets $Q\in {X_i\choose t+x_i}$, we have  $|\ff(Q, X_i\cup Y)|\ge \frac 12 |\aaa(Q, X_i\cup Y)|$  and $Z_i\subset Q$. The latter is because
 \begin{equation}\label{boundz}\frac{|\m C_i\setminus \m C_i[Z_i]|}{|\m C_i|} = \frac{{t+2x_i\choose x_i}-{t+x_i-1\choose x_i-1}}{{t+2x_i\choose x_i}}\le \frac{(x_i+1){t+2x_i-1\choose x_i}}{{t+2x_i\choose x_i}} \le  \frac{2x_i^2}{t}\le \frac 1{200s},\end{equation}
valid since $x_i\le \ell$ and, by choice of parameters we have $t\ge 400\ell^3 s$ (cf. application of Theorem~\ref{thmfinegrained} in the beginning of the proof).

Let us denote $\m Q_i$ the family of those $Q\in {X_i\choose t+x_i}$, such that $Z_i\subset Q$ and $|\ff(Q, X_i\cup Y)|\ge \frac 12 |\aaa(Q, X_i\cup Y)|$.

Next, because of our bound on the remainder $\m R$, the families $\m C_i$ must be nearly disjoint. Namely, we have shown in Lemma~\ref{lemboundh2} that we must have  $h(n,k,s,t)\ge h(n,k,s-1,t)+(1-\frac 1{10s^3})h(n,k,1,t)$. This and the fact that $\ff$ is extremal implies that for any two $\m C_i, \m C_j$ with $i\ne j$ we must have $|\m A[\m C_i]\cap \m A[\m C_j]|\le \frac 1{5s^3}|\m A[\m C_i]|$.

Using this, we conclude that $\m Q_i':=\m Q_i\setminus\{Q: Q\in \m A[\m C_j]\text{ for some }j\ne i\}$ contains at least a $0.98$-fraction of all sets $Q\in {X_i\choose t+x_i}$. Next, for each $i\in [s]$ we shall select a set $Q_i\in \m Q_i'$ such that $|Q_i\cap Q_j|\le t-1$ for each $i\ne j\in[s]$.
\begin{itemize}
  \item For $i=1$ select any $Q_1\in \m Q_1'$.
  \item Given that we selected $Q_1,\ldots, Q_{i-1}$, for $j\in [i-1]$ denote $V_{ij}=X_i\setminus Q_j$ and note that $|V_{ij}|\ge x_i+1$ because $Q_j\notin \aaa[\m C_i]$. Choose some set $V'_{ij}\subset V_{ij}$ of size exactly $x_i+1$ and note that
      $\frac{|\m C_i\setminus \m C_i[V'_{ij}]|}{|\m C_i|} \le \frac 1{200s}$ via the same calculations as in \eqref{boundz}. From here, using the union bound, we conclude that at least a $0.99$-fraction of all sets in $\m C_i$ contain $V_{ij}'$ for each $j\in [i-1]$. Thus there is such a set $Q_i$, which additionally belongs to $\m Q_i'$. Note that, for any $j\in [i-1]$,  $|Q_i\cap Q_j|\le |Q_i\setminus V'_{ij}| =t-1$. This completes our construction.
\end{itemize}
Note that the sets $Y, Q_1,\ldots, Q_s$ have pairwise intersections $\le t-1$ and, moreover, $\m R(Y)$ and $\m F'_i:=\m F(Q_i, X_i\cup Y)$ are all $r$-spread for $r=\frac {n-2k}{2(k-t)}$. For $\m R(Y)$ this is true by the choice of $Y$, and for each $\m F'_i$ it is true since $|\m F(Q_i, X_i\cup Y)|\ge \frac 12\m A(Q_i, X_i\cup Y)$, and the latter is $\frac{n-2k}{k-t}$-spread.

We can essentially repeat the argument from Lemma~\ref{lemtint} with $t'=t$ in order to get $s+1$ sets $Y''\in \m R\subset \m F$ and $Q_i''\in \ff[Q_i, X_i\cup Y]\subset \ff$ with pairwise intersection strictly smaller than $t$. For $i\ne j\in [s]$ put $a_{ij}:=t-|Q_i\cap Q_j|$ and also $a_{0j}:=t-|Y\cap Q_j|$. For $i\in [s]$ consider
  \begin{align*}\g_i &:=\ff'_i\setminus\Big\{F\in \ff'_i: \ \exists j\in[s]\setminus \{i\}, \text{ such that } |F\cap  (Q_j\setminus Q_i)|\ge \frac{a_{ij}}2\Big\},\\
  \m R'&:= \m R(Y)\setminus\Big\{F\in \m R(Y): \ \exists i\in[s], \text{ such that } |F\cap  Q_i|\ge a_{0i}\Big\}.\end{align*}

   The size of the former family is at most
 \begin{align*}\sum_{j\in [s]\setminus\{i\}}{| Q_j\setminus Q_i|\choose \big\lceil\frac{a_{ij}}2\big\rceil}\max_{X: |X| = \big\lceil\frac{a_{ij}}2\big\rceil, X\cap Q_i = \emptyset}|\ff_i'(X)|\le&\\  \sum_{j\in [s]\setminus\{i\}}{x_j+a_{ij}\choose \big\lceil\frac{a_{ij}}2\big\rceil} r^{-\big\lceil\frac{a_{ij}}2\big\rceil} |\ff'_i|\le&\\
  \sum_{j\in [s]\setminus\{i\}} \Big(\frac{e(\ell+a_{ij})} {\big\lceil\frac{a_{ij}}2\big\rceil} \Big)^{\big\lceil\frac{a_{ij}}2\big\rceil} r^{-\big\lceil\frac{a_{ij}}2\big\rceil}|\ff_i'|\le&\\
  \sum_{j\in [s]\setminus\{i\}} (2e(\ell+1))^{\big\lceil\frac{a_{ij}}2\big\rceil} r^{-\big\lceil\frac{a_{ij}}2\big\rceil}|\ff_i'|
  \le&\\
  (s-1)\frac{2e(\ell+1)}{r} |\ff_i'|\le&\ \frac 12 |\ff_i'|,\end{align*}
  where we pass from the third to the fourth line using $\frac{e(\ell+a_{ij})} {\lceil\frac{a_{ij}}2\rceil}\le \frac{2e(\ell+a_{ij})} {a_{ij}}\le 2e(\ell+1)$ since  $a_{ij}\ge 1$. In the last two displayed inequalities we use $r\ge 100s\ell$.

  This implies that $|\g_i|\ge \frac 12 |\ff'_i|$. Again,
 because of this and the trivial inclusion $\g_i(Z)\subset \ff'_i(Z)$, valid for any $Z$,  we conclude that $\g_i$ is $ \frac r2$-spread for all $i\in [s]$. Using a virtually identical chain of inequalities, we can show that $|\m R'|\ge \frac 12|\m R(Y)|$ and thus that $\m R'$ is $r/2$-spread.

 Now, importantly, if we find pairwise disjoint sets $Y'\in \m R'$, $Q'_i\in \m G_i$, then, putting $Y'':=Y\cup Y'$, $Q_i'':=Q_i\cup Q'_i$, we have $Y''\in \m R, Q_{i}''\in \ff[Q_i, X_i\cup Y]$ and, crucially,
 \begin{align*}|Y''\cap Q_i''| &= |Y'\cap Q_i|+|Y\cap Q_i|+|Y\cap Q'_i| < a_{0i}+(t-a_{0i})+0=t,\\
|Q_i''\cap Q_j''| &= |Q_i'\cap Q_j|+|Q_i\cap Q_j|+|Q_i\cap Q_j'|<\frac {a_{ij}}2+(t-a_{ij})+\frac{a_{ij}}2 = t.
\end{align*}
Thus, we arrive at a contradiction, as long as we can guarantee pairwise disjoint sets $Y', Q'_i$, $i\in[s]$. This is done as in Lemma~\ref{lemtint}. Concluding, we must have $\m R =\emptyset$ in this case.\\

Let us now argue in the case $t\le Cs\log^3_2(st)$. The proof in this case is very similar and is actually easier. We go along the proof and point out the changes made. We define $Y$ in the same way and note that $|Y\cap T_i|<t$ (i.e., $T_i\not\subset Y$). Again, for $|Y|\ge 2t\log_2n$ we use the bound \eqref{eq235}. As long as $|Y|\le 2t\log_2n$, we use $|\m R|\le (st)^{-90}{n-t\choose k-t}$. We define $\m P_i:=\aaa[T_i, T_i\cup Y]$ and replace the calculations \eqref{eq227}--\eqref{eq230} by the following.
$$|\m P_i|={n-t-|Y|\choose k-t}\ge \Big(\frac{n-k-|Y|}{n-k}\Big)^{k-t}{n-t\choose k-t}\ge e^{-\frac{|Y|(k-t)}{n-k-|Y|}}{n-t\choose k-t}.$$
If $|Y|\le 2t\log_2n$, then from the above we get $|\m P_i|\ge \frac 12{n-t\choose k-t}$ since $n>2k+Cs(k-t)\log^4_2n>2k+Ct(k-t)\log_2n$ and (using it in the denominator of the exponent) $|Y|\le k$. If $|Y|>2t\log_2n$, we can similarly get $|\m P_i|\ge 2^{-(|Y|-2t\log_2 n)/(1000)} \frac 12{n-t\choose k-t}$. Combining with \eqref{eq235}, we get the following analogue of \eqref{eqratiorem}.
$$\frac{|\m P_i|}{|\m R|}\ge (st)^{80}.$$
The next step is also significantly easier. Since $\ff$ is extremal, using the above, we may simply say that $\m F'_i:=\ff(T_i, T_i\cup Y)$ have size at least half of the size of $\m P_i$, and thus $\m F'_i$ are $r$-spread with $r = \frac{n-k-t}{2(k-t)}$. What is left is to repeat the argument in the style of Lemma~\ref{lemtint}. Virtually no change in needed in that argument. The theorem is proved.


\begin{thebibliography}{100}
\bibitem{AK} R. Ahlswede and L.H. Khachatrian, {\it The Complete Intersection Theorem for Systems of Finite Sets}, European Journal of Combinatorics. 18 (1997), 125--136.


\bibitem{Alw} R. Alweiss, S. Lovett, K. Wu, and J. Zhang, {\it Improved bounds for the sunflower lemma}, arXiv:1908.08483 (2019)


\bibitem{BDE} B. Bollob\'as, D.E. Daykin and P. Erd\H os, \textit{Sets of independent edges of a hypergraph}, Quart. J. Math. Oxford Ser. 27 (1976), N2, 25--32.






\bibitem{EFP} D. Ellis, E. Friedgut, and H. Pilpel, {\it Intersecting Families of Permutations}, J. Amer. Math. Soc. 24 (2011), 649--682.




\bibitem{E} P. Erd\H os, \textit{A problem on independent r-tuples}, Ann. Univ. Sci. Budapest. 8 (1965) 93--95.


\bibitem{EG} P. Erd\H os and T. Gallai, {\it On maximal paths and circuits of graphs}, Acta Math.
Acad. Sci. Hungar. 10 (1959), 337--356.

\bibitem{EKR} P. Erd\H os, C. Ko, and R. Rado, \textit{Intersection theorems for systems of finite sets}, The Quart. J. Math. 12 (1961), N1, 313--320.




\bibitem{F1} P. Frankl, \textit{The Erd\H os-Ko-Rado theorem is true for n=ckt}, Combinatorics (Proc. Fifth Hungarian Colloq., Keszthely, 1976), Vol. I, 365--375, Colloq. Math. Soc. J\'anos Bolyai, 18, North-Holland.






\bibitem{F4} P. Frankl, \textit{Improved bounds for Erd\H os' Matching Conjecture}, J. Comb. Theory Ser. A 120 (2013), 1068--1072.

\bibitem{F6} P. Frankl, {\it On the maximum number of edges in a hypergraph with a given matching number}, Disc. Appl. Math. 216 (2017), N3, 562--581.


\bibitem{F7} P. Frankl, {\it Proof of the Erd\H os matching conjecture in a new range}, Isr. J. Math. 222 (2017), N1, 421--430.

\bibitem{F2020} P. Frankl, {\it An improved universal bound for $t$-intersecting families}, European
J. Combinatorics 87 (2020), 103134





\bibitem{FK21} P. Frankl, A. Kupavskii,  {\it The Erd\H os Matching Conjecture and Concentration Inequalities}, Journal of Comb. Theory Ser B. 157 (2022), 366--400.




\bibitem{FT} P. Frankl, N. Tokushige, {\it Extremal problems for finite sets}, American Mathematical Society, Providence, Rhode Island, 2018.




\bibitem{GP} D. Gerbner, B. Patk\'os, {\it Extremal finite set theory}, CRC Press,  New York, 2019.

\bibitem{HR} A. Hajnal, B. Rothschild, {\it A Generalization of the Erd\H os--Ko--Rado Theorem on Finite Set Systems}, J. Combin.  Theory Ser. A 15 (1973), 359--362.


\bibitem{HLS} H. Huang, P.-S. Loh  and B. Sudakov, {\it The Size of a Hypergraph and its Matching Number}, Comb. Probab. Comput. 21 (2012), N3, 442--450.




\bibitem{KLLM} P. Keevash, N. Lifshitz, E. Long, and D. Minzer, {\it Hypercontractivity for global functions and sharp thresholds}, arXiv:1906.05568  (2019).




\bibitem{KK} D. Kolupaev, A. Kupavskii, {\it Erd\H os Matching Conjecture for almost perfect matchings}, Discrete Math. 346 (2023), N4.

\bibitem{Kup54} A. Kupavskii, {\it Erd\H os--Ko--Rado type results for partitions via spread approximations} (2023), arXiv.2309.00097.

\bibitem{Kup55} A. Kupavskii, {\it Intersection theorems for uniform subfamilies of hereditary families} (2023), arXiv.2311.02246.

\bibitem{KuZa} A. Kupavskii and D. Zakharov, {\it Spread approximations for forbidden intersections
problems}, to appear in Advances in Mathematics, available at arxiv:2203.13379




\bibitem{Sto} M. Stoeckl, {\it Lecture notes on recent improvements for the sunflower lemma}
\url{https://mstoeckl.com/notes/research/sunflower_notes.html }

\bibitem{Tao} T. Tao,{\it The sunflower lemma via shannon entropy}, \url{https://terrytao.wordpress.com/2020/07/20/the-sunflower-lemma-via-shannon-entropy/}

\bibitem{W} R.M. Wilson, {\it The exact bound in the Erd\H os--Ko--Rado theorem}, Combinatorica 4 (1984), 247--257.

\end{thebibliography}
\end{document}